\newtheorem{theorem}{Theorem}
\newtheorem{lemma}{Lemma}
\newtheorem{remark}{Remark}
\newtheorem{example}{Example}
\newtheorem{corollary}{Corollary}
\newtheorem{theoremKM}{Theorem KM}
\newenvironment{proof}{\noindent\textit{Proof}.}{\hfill\raisebox{-1ex}{$\boxtimes$}}
\newenvironment{proofthree}{\noindent\textit{Proof of Theorem~\ref{t:09}.}}{\hfill\raisebox{-1ex}{$\boxtimes$}}
\newenvironment{proofscf}{\noindent\textit{Proof of Theorem~\ref{t:05}}.}{\hfill\raisebox{-1ex}{$\boxtimes$}}
\newcounter{tmpabcd}
\newcommand{\N}{\mathbb{N}}
\newcommand{\Z}{\mathbb{Z}}
\newcommand{\Q}{\mathbb{Q}}
\newcommand{\R}{\mathbb{R}}
\newcommand{\cC}{\mathcal{C}}
\newcommand{\cS}{\mathcal{S}}
\newcommand{\cR}{\mathcal{R}}
\newcommand{\cF}{\mathcal{F}}
\newcommand{\ccF}{\overline{\mathcal{F}}}
\newcommand{\hg}{\hat{\vv g}}
\newcommand{\ve}{\varepsilon}
\newcommand{\eqdef}{:=}
\newcommand{\intr}{\operatorname{int}}
\newcommand{\diag}{\ensuremath{{\rm diag}}}
\newcommand{\SD}{\tilde{\nabla}}
\newcommand{\vv}[1]{{\mathbf{#1}}}
\newcommand{\Sprindzuk}{Sprind\v zuk}
\begin{document}

\title{\huge Explicit bounds for rational points near planar curves\\ and metric Diophantine approximation}

\author{\Large Victor Beresnevich\footnote{EPSRC Advanced Research Fellow, grant no.EP/C54076X/1\hfill\today} \and\Large Evgeniy Zorin}

\date{}

\maketitle

\vspace*{-2ex}

\begin{abstract}
\normalsize
The primary goal of this paper is to complete the theory of metric Diophantine approximation initially developed in \cite{Beresnevich-Dickinson-Velani-07:MR2373145} for $C^3$ non-degenerate planar curves. With this goal in mind, here 
for the first time we obtain fully explicit bounds for the number of rational points near planar curves. Further, introducing a perturbational approach we bring the smoothness condition imposed on the curves down to $C^1$ (lowest possible). This way we 
broaden the notion of non-degeneracy in a natural direction and introduce a new topologically complete class of planar curves to the theory of Diophantine approximation.
In summary, our findings improve and complete the main theorems of \cite{Beresnevich-Dickinson-Velani-07:MR2373145} 
and extend the celebrated theorem of Kleinbock and Margulis \cite{Kleinbock-Margulis-98:MR1652916}
in dimension 2 beyond the notion of non-degeneracy.
\end{abstract}

\noindent\textit{Keywords}\/: Metric simultaneous Diophantine approximation, rational points near curves, Khintchine theorem, ubiquity

\noindent\textit{2000 MSC}\/:
11J83, 11J13, 11K60

\large

\section{Introduction}




Problems about rational points lying near curves and surfaces are widespread in number theory and include, for instance, questions regarding small values of homogeneous polynomials on the integer lattice. Within this paper we study the distribution of rational points near curves $\cC$ embedded in $\R^2$. With this in mind we now introduce some basic notation. First of all, without loss of generality, let us agree that the curves we consider are given as a graph $\cC_f=\{(x,f(x)):x\in I\}$ of some function $f$ defined on an interval $I$. Given $\delta>0$, $Q>1$ and a subinterval $J\subset I$, consider the following counting function
$$
N_f(Q,\delta,J)\ \eqdef\ \# \left\{ \left(p_1/q,p_2/q\right)\in\Q^2 \ :
\begin{array}{l}
 p_1/q\in J,\ \ 0<q\le Q, \\[0.5ex]
 |f(p_1/q)-p_2/q|\le\delta Q^{-1}
\end{array}
  \right\},
$$
where $\#A$ stands for the cardinality of a set $A$ and $q,p_1,p_2$ denote coprime integers. Essentially, this function counts rational points $(p_1/q,p_2/q)$ with denominator $q\le Q$ lying at the distance comparable to $\delta/Q$ from the arc $\{(x,f(x)):x\in J\}$ of $\cC_f$.

To begin with, we give a brief account of known results. Let $I\subset\R$ be a compact interval, $c_2\ge c_1>0$ and let $\cF(I;c_1,c_2)$ be the set of $C^2$ functions $f:I\to\R$ such that
\begin{equation} \label{cond_g2p}
c_1 \leq |f''(x)| \leq c_2\qquad\text{for all $x \in I$.}
\end{equation}
In 1994 Huxley \cite[Th.\,1]{Huxley-1994-rational_points} proved that
\begin{equation}\label{Hux}
N_f(Q,\delta,I)\ll_{\ve} C^{10/3}\delta^{1-\ve}Q^2+C^{1/3}Q,\qquad\text{where }\ C=\max\{c_2,c_1^{-1}\}
\end{equation}
and the constant implicit in $\ll_\ve$ depends on $\ve$ but does not depend on $\delta$, $Q$, $C$ or $f$.
For $\delta>Q^{2/3}$ Huxley's result was improved by Vaughan and Velani \cite[Th.\,1]{Vaughan-Velani-2007} on showing that for any $f\in\cF(I;c_1,c_2)$, any $Q>1$ and $0<\delta<\frac12$
\begin{equation}\label{VV1}
N_f(Q,\delta,I)\ll \delta Q^2+\delta^{-\frac12}Q.
\end{equation}
Additionally assuming that $f''\in\operatorname{Lip}_\theta(I)$ with $0<\theta<1$ they proved \cite[Th.\,3]{Vaughan-Velani-2007} that for any $\ve>0$
\begin{equation}\label{VV2}
N_f(Q,\delta,I)\ll \delta Q^2+\delta^{-\frac12}Q^{\frac12+\ve}+\delta^{\frac{\theta-1}2}Q^{\frac{3-\theta}2}.
\end{equation}
Estimates (\ref{VV1}) and (\ref{VV2}) were also extended to an inhomogeneous case in \cite{Beresnevich-Vaughan-Velani-08-Inhom}.

When $\delta=o(Q^{-1})$ the quantity  $N_f(Q,\delta,I)$ can vary from $0$ to $Q$ depending on the choice of $f\in\cF(I;c_1,c_2)$ irrespectively of the actual value of $\delta$  -- see \cite[\S2.2]{Beresnevich-SDA1} for examples. In the case $\delta\gg Q^{-1}$ lower bounds for $N_f(Q,\delta,I)$ were obtained in \cite[Th.\,6]{Beresnevich-Dickinson-Velani-07:MR2373145} and in the inhomogeneous form in \cite[Th.\,5]{Beresnevich-Vaughan-Velani-08-Inhom}. More precisely, it was shown that for any $f\in\cF(I;c_1,c_2)$ additionally satisfying the condition $f\in C^3(I)$ there exist constants $k_1,k_2,c,Q_0>0$ such that for any $Q>Q_0$ and any $\delta$ satisfying $k_1Q^{-1}\le\delta\le k_2$ one has
\begin{equation}\label{BDVV}
N_f(Q,\delta,I)\ge c\delta Q^2.
\end{equation}
However, the above undoubtedly remarkable results fall short of providing a complete theory for the whole class $\cF(I;c_1,c_2)$. Indeed, within (\ref{VV2}) and (\ref{BDVV}) the additional differentiability and Lipschitz assumptions are imposed while (\ref{Hux}) is not optimal. In this paper we resolve this issue in full in relation to lower bounds and furthermore expand the results to a genuinely larger, topologically closed class of functions $f$ introduced in the next paragraph. Furthermore, our results are fully explicit and uniform in $f$.

Throughout $\ccF(I;c_1,c_2)$ stands for the closure of $\cF(I;c_1,c_2)$ in the $C^0$ (uniform convergence) topology, \emph{i.e.} the topology induced by the norm $\|f\|_{C^0}\eqdef\sup_{x\in I}|f(x)|$. In order to state the main counting result of this paper we now gather the definitions of various explicit constants appearing in the course of establishing this result and depending on $c_1$ and $c_2$ only. These are
\begin{equation}\label{mainconst}
    \hat E\ \eqdef \ \frac{3^62^5c_2}{c_1\min\{1,\sqrt{c_1}\}}\,,\qquad c_0 \ \eqdef\ 2^{-13}\hat E^{-6}c_2^{-1},
\end{equation}
\begin{equation}\label{def_const}
   C_1 \ \eqdef\ \frac{c_2}{2c_1 c_0^2}=\frac{2^{25}\hat E^{12}c_2^{3}}{c_1}\,,\qquad C_2\eqdef \left(\frac{c_2C_1}{2c_0}\right)^{1/3}.
\end{equation}

\begin{theorem}\label{t:01}
Let $I$ be a compact interval, $c_2\ge c_1>0$ and the constants $\hat E$, $c_0$, $C_1$ and $C_2$ be given by \eqref{mainconst} and \eqref{def_const}. Let a subinterval $J\subset I$ of length $|J|\le\tfrac12$, $Q>1$ and $\delta\le1$ satisfy the conditions
\begin{equation}\label{e:003}
    \delta Q^2|J|\ge 8C_1
\qquad\text{and}\qquad
    Q\delta \ge C_2.
\end{equation}
Assume also that either
\begin{equation}\label{vb21}
Q\ge \frac{128}{c_0c_1^2}\,|J|^{-3}\qquad\text{or}\qquad
\frac{c_0^2}{c_2}\delta^{-2}\ge Q\ge \frac{16}{c_0c_1^{2}}\,|J|^{-2}.
\end{equation}
Then for any $f\in\ccF(I;c_1,c_2)$
$$N_f(Q,\delta,J)\ge \frac{1}{4C_1}\,\delta Q^2|J|.$$
\end{theorem}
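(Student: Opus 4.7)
The plan has two stages corresponding to the paper's two main innovations. First I would prove the result for $f \in C^3(I) \cap \cF(I;c_1,c_2)$ with a sharpened conclusion, say $N_f(Q,\delta,J) \ge \tfrac{1}{2C_1}\delta Q^2|J|$ under proportionally stronger versions of \eqref{e:003}--\eqref{vb21}, adapting the ubiquity argument underlying \eqref{BDVV}. Then I would deduce the full statement by approximating a general $f \in \ccF(I;c_1,c_2)$ by such smooth functions in $C^0$ and transferring rational points through the triangle inequality.

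\textbf{Smooth case.} The core of the argument reruns the ubiquity / lattice-counting machinery of \eqref{BDVV}, but with fully explicit bookkeeping of constants. Partition $J$ into subintervals of length $\Delta$ to be chosen in terms of $c_0$ and $\hat E$. On each such subinterval the second-order Taylor estimate gives $|f(x)-L(x)| \le \tfrac12 c_2 \Delta^2$, where $L$ is the tangent line, while the quantitative curvature $|f''|\ge c_1$ ensures that distinct subintervals carry well-separated tangent slopes — this is the transversality which prevents double counting. On each arc, Minkowski's theorem applied to the lattice of integer solutions of the associated linear system yields $\gtrsim \delta Q \Delta$ coprime rational points, provided $c_2\Delta^2 \ll \delta/Q$. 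Summing over the $\sim |J|/\Delta$ arcs gives $\gtrsim \delta Q^2|J|$. The dichotomy in \eqref{vb21} corresponds exactly to the two viable scales for $\Delta$: either $\Delta \asymp |J|$ (first alternative, which demands $Q$ large compared with $|J|^{-3}$), or $\Delta$ is a smaller sub-scale of $|J|$ (second alternative, which requires in addition $Q \le c_0^2/(c_2\delta^2)$ so the linearization error is absorbed). The constants $\hat E, c_0, C_1, C_2$ of \eqref{mainconst}--\eqref{def_const} emerge as the products and ratios needed to balance these scales.

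\textbf{Perturbation.} Given arbitrary $f \in \ccF(I;c_1,c_2)$, take $g_n \in \cF(I;c_1,c_2) \cap C^\infty$ with $\eta_n := \|g_n-f\|_{C^0}\to 0$; such a sequence exists by mollification, since $|g''|\ge c_1>0$ forces $g''$ to have constant sign on $I$, a property preserved under convolution with a nonnegative kernel. For $n$ large enough that $\eta_n \le \delta/(2Q)$, the triangle inequality
\[
|f(p_1/q)-p_2/q| \ \le\ |g_n(p_1/q)-p_2/q| + \eta_n
\]
shows that every rational point counted by $N_{g_n}(Q,\delta/2,J)$ is counted by $N_f(Q,\delta,J)$. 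The hypotheses \eqref{e:003}--\eqref{vb21} are stated with exactly enough numerical slack — the factors $8$ and $\tfrac14$ — that $(Q,\delta/2,J)$ still satisfies the sharper hypotheses needed to apply the smooth case to $g_n$; the resulting lower bound $\tfrac{1}{2C_1}(\delta/2)Q^2|J| = \tfrac{1}{4C_1}\delta Q^2|J|$ then transfers to $f$.

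\textbf{Main obstacle.} The substantive work is the smooth case. Existing ubiquity arguments give qualitative lower bounds of the same shape but with implicit constants depending on $\|f'''\|_\infty$ or Lipschitz data for $f''$. Obtaining the clean dependence on $c_1, c_2$ alone, with the specific values $\hat E, c_0, C_1, C_2$ of \eqref{mainconst}--\eqref{def_const}, requires a carefully tuned choice of every intermediate parameter — covering mesh $\Delta$, denominator range, tangent-line tolerance — so that all error terms balance without losing the optimal power $\delta Q^2|J|$, and in such a way that the same scheme handles both regimes in \eqref{vb21}. Once the smooth case is secured, the $C^0$-perturbation is essentially formal.
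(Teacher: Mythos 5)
There is a genuine gap, and it sits exactly where you place the ``main obstacle''. Your perturbation stage is fine in outline (it is essentially the paper's reduction, Theorem~\ref{t:08}), but it is only usable if the smooth case comes with constants depending on $c_1,c_2$ alone, uniformly over smooth approximants: mollifications $g_n$ of a general $f\in\ccF(I;c_1,c_2)$ satisfy $c_1\le |g_n''|\le c_2$ but have third derivatives that are in general unbounded as $\eta_n\to0$ (a generic $f$ in this class has no second derivative on a dense set). The dependence on $\|f'''\|_\infty$ in the argument behind \eqref{BDVV} is not an artefact of careless bookkeeping that ``careful tuning'' removes; it enters through the measure estimate at the heart of that proof, so rerunning the same machinery cannot produce the uniform statement your perturbation step needs. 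You have named the difficulty but supplied no mechanism for overcoming it, and that mechanism is the actual content of the paper: an explicit Kleinbock--Margulis-type nondivergence estimate (Theorem~\ref{t:09}) for the map $\vv g=(xf'-f,-f')$, whose $(C,\alpha)$-good inputs are obtained from a new criterion (Lemma~\ref{C-alpha-good}) that requires only $C^1$ regularity plus $c_1\le|f''|\le c_2$ --- no third derivative, hence uniformity over the whole class.

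Separately, the counting mechanism inside your smooth case is not correct as stated. Minkowski's linear forms theorem gives, for each $x$, a single nonzero integer triple $(q,p_1,p_2)$ solving the linear system; it does not ``yield $\gtrsim \delta Q\Delta$ coprime rational points'' on an arc, and slope separation between arcs is not what produces a lower bound on the number of distinct points. The missing steps are: (i) a measure estimate showing that outside a subset of $J$ of measure at most $\tfrac12|J|$ the Minkowski triple necessarily has large denominator $q> c_0 Q$ (this is precisely the set $B_{\vv g}(J,c_0\delta,c_2(c_0Q\delta)^{-1},2c_0Q)$ controlled by Theorem~\ref{t:09}, and it is where the two alternatives in \eqref{vb21} are actually used, to force the nondivergence parameter $\rho$ to equal $\min\{1,c_1\}$ --- not to select a covering mesh $\Delta$); and (ii) the observation that each such rational point $p_1/q$ can serve only an interval of $x$'s of length $\le 2C_1(\delta Q^2)^{-1}$, so that the surviving set of measure $\ge\tfrac12|J|$ forces at least $\tfrac{1}{4C_1}\delta Q^2|J|$ distinct points (this is the passage from the covering statement, Theorem~\ref{t:02}, to Theorem~\ref{t:01}). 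Without (i) in a form uniform in $f$, neither your smooth case nor, consequently, your perturbation step goes through.
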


Theorem~\ref{t:01} is a consequence of a more general covering result (Theorem~\ref{t:02} below) which will require the following notation. Let
$$
  \cR^c_f(Q,\delta,J) \ \eqdef \ \left\{ (q,p_1,p_2)\in \N\times\Z^2 \ :
\begin{array}{l}
 p_1/q\in J,\ c Q< q\le Q \\[0.2ex]
 |f(p_1/q)-p_2/q|\le\delta/Q\\[0.2ex]
 \gcd(q,p_1,p_2)=1
\end{array}
  \right\},
$$
where $Q>1$, $\delta>0$, $c\ge0$ and $J\subset I$; and let
$$
\Delta^{c}_f(Q,\delta,J,\rho)\eqdef\bigcup_{(q,p_1,p_2)\in\cR^{c}_f(Q,\delta,J)}
\big\{x:|x-p_1/q|\le \rho\big\}.
$$
Also let $|A|$ denote the Lebesgue measure of a measurable set $A\subset\R^n$.

\begin{theorem}\label{t:02}
Let $I$ be a compact interval, $c_2\ge c_1>0$ and the constants $\hat E$, $c_0$, $C_1$ and $C_2$ be given by \eqref{mainconst} and \eqref{def_const}. Let a subinterval $J\subset I$ of length $|J|\le\tfrac12$, $Q>1$ and $\delta\le1$ satisfy \eqref{e:003}
and \eqref{vb21}. Then for any $f\in\ccF(I;c_1,c_2)$
\begin{equation}\label{vb+x}
\left|\Delta^{c_0}_f(Q,\delta,J,\rho)\cap J\,\right| \ \ge \ \tfrac12 \, |J|,\qquad\text{where $\rho\eqdef C_1(\delta Q^2)^{-1}$.}
\end{equation}
\end{theorem}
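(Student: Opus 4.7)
My plan is to first prove the theorem for $f\in\cF(I;c_1,c_2)$ and then extend to the closure $\ccF(I;c_1,c_2)$ by a semi-continuity argument. Given $f\in\ccF(I;c_1,c_2)$, choose $f_n\in\cF(I;c_1,c_2)$ with $\|f_n-f\|_{C^0}\to 0$. I claim that
$$
\limsup_{n\to\infty}\Delta^{c_0}_{f_n}(Q,\delta,J,\rho)\ \subseteq\ \Delta^{c_0}_f(Q,\delta,J,\rho).
$$
Indeed, if $x$ lies in the $\limsup$ set then $x\in\Delta^{c_0}_{f_{n_k}}$ along a subsequence, witnessed by triples $(q_k,p_{1,k},p_{2,k})$; since the set of candidate triples $(q,p_1,p_2)$ with $c_0Q<q\le Q$ and $p_1/q\in J$ is finite, a sub-subsequence is constant, equal to some $(q,p_1,p_2)$, and the condition $|f_{n_k}(p_1/q)-p_2/q|\le\delta/Q$ passes to the limit by uniform convergence, giving $(q,p_1,p_2)\in\cR^{c_0}_f(Q,\delta,J)$ and $x\in\Delta^{c_0}_f$. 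The reverse Fatou inequality then yields
$$
|\Delta^{c_0}_f(Q,\delta,J,\rho)\cap J|\ \ge\ \limsup_n |\Delta^{c_0}_{f_n}(Q,\delta,J,\rho)\cap J|\ \ge\ \tfrac12|J|,
$$
so the problem reduces to proving the theorem for $f\in\cF(I;c_1,c_2)$.

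\textbf{Smooth case.} For $f\in\cF(I;c_1,c_2)$ the aim is to show that ``most'' $x\in J$ lie within $\rho=C_1(\delta Q^2)^{-1}$ of some $p_1/q$ with $(q,p_1,p_2)\in\cR^{c_0}_f(Q,\delta,J)$. The natural tool is Minkowski's theorem in $\R^3$: for each $x\in J$, apply it to a symmetric box of the form
$$
\Pi(x)=\bigl\{(q,p_1,p_2)\in\R^3:|q|\le Q,\ |qx-p_1|\le A,\ |qf(x)-p_2|\le B\bigr\}
$$
with $A,B$ chosen so that $8QAB>2^3$; the explicit values of $C_1,C_2$ in \eqref{def_const} are calibrated so that the hypotheses \eqref{e:003}--\eqref{vb21} supply the requisite volume. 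A nonzero integer point in $\Pi(x)$, divided out by $\gcd(q,p_1,p_2)$, becomes a primitive triple, and the $C^2$-Taylor expansion $f(p_1/q)=f(x)+f'(x)(p_1/q-x)+O(c_2(p_1/q-x)^2)$ converts the linear inequalities of $\Pi(x)$ into $|f(p_1/q)-p_2/q|\le\delta/Q$ and $|x-p_1/q|\le\rho$.

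\textbf{Main obstacle.} The delicate step is enforcing the denominator lower bound $q>c_0Q$: Minkowski produces a primitive triple, but possibly with very small $q$. The curvature bound $|f''|\ge c_1$ restricts how many distinct small-$q$ rational points can cluster near a given $x$, so the exceptional set $S=\{x\in J:\text{every Minkowski witness at }x\text{ has }q\le c_0Q\}$ has measure controlled by a multiple of $c_0|J|$. The dichotomy in \eqref{vb21} handles two complementary regimes---one in which $Q\ge 128(c_0c_1^2)^{-1}|J|^{-3}$ so that curvature dominates at short scale, the other in which $Q\le c_0^2c_2^{-1}\delta^{-2}$ so that the Taylor remainder remains within budget---and in each the explicit choices of $\hat E,c_0,C_1,C_2$ are arranged so that $|S|<\tfrac12|J|$. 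Combined with the Minkowski construction on $J\setminus S$, this forces $|\Delta^{c_0}_f(Q,\delta,J,\rho)\cap J|\ge|J\setminus S|\ge\tfrac12|J|$, proving \eqref{vb+x}. I expect the heaviest bookkeeping to lie in tracking the explicit constant chain---Minkowski volume, Taylor remainder, and bound on $|S|$---simultaneously through both regimes of \eqref{vb21}.
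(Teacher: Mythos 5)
Your reduction to the smooth case is fine (it is a limsup/finiteness variant of what the paper does in Theorem~\ref{t:08}), and the Minkowski step on the good set is also in the spirit of the paper, which applies Minkowski's linear forms theorem to the tilted system $|f'(x)(qx-p_1)-(qf(x)-p_2)|\le c_0\delta$, $|(qx-p_1)f''(x)|\le c_2(c_0Q\delta)^{-1}$, $|q|\le Q$, written via the dual map $\vv g$ of \eqref{def_f}. But the step you label the ``main obstacle'' is a genuine gap, and it is the entire substance of the theorem. You need a measure bound for the set of $x\in J$ at which some nonzero integer triple with $q\le 2c_0Q$ satisfies the (slightly shrunk) Minkowski system; you assert that the curvature bound $|f''|\ge c_1$ ``restricts clustering'' and that this exceptional set has measure $O(c_0|J|)$, but no mechanism is given, and the naive counting does not work: for a fixed $q\le c_0Q$ and $p_1$ the first condition alone cuts out an interval of length $\asymp (q\delta Q)^{-1}$, and summing over admissible $(q,p_1)$ gives a total of order $c_0|J|/\delta$, which is useless for small $\delta$; one must genuinely exploit the second (curvature) condition uniformly in $\delta$ and $Q$, with explicit constants, and for merely $C^2$ (indeed $\ccF$) functions.

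In the paper this is exactly the content of Theorem~\ref{t:09}, an explicit version of the Bernik--Kleinbock--Margulis estimate proved via $(C,\alpha)$-good functions and the Kleinbock--Margulis quantitative nondivergence theorem (Theorem KM), which occupies all of Section~4; the bad set $B_{\vv g}(J,c_0\delta,c_2(c_0Q\delta)^{-1},2c_0Q)$ is shown to have measure at most $\hat E(2c_0c_2)^{1/6}|J|=\tfrac14|J|$, after which every Minkowski witness at a point of $G=\tfrac34J\setminus B_{\vv g}$ automatically has $q>2c_0Q$ and Taylor expansion finishes the argument as you describe. Note also that the measure bound carries the exponent $\tfrac16$ in $c_0$, not a linear dependence, which already shows a soft ``clustering'' count cannot deliver it; and the role of the dichotomy \eqref{vb21} is not a curvature-versus-Taylor split as you guess, but simply to ensure $T=2c_0Q$ is large enough relative to $|J|$ that the parameter $\rho$ of \eqref{rho} equals $\min\{1,c_1\}$, so that the constant $E$ of Theorem~\ref{t:09} is bounded by $\hat E$ as in \eqref{E1}--\eqref{E2}. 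Without an argument supplying this nondivergence-type estimate, your outline does not prove the theorem.
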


The proof of Theorem~\ref{t:01} modulo Theorem~\ref{t:02} is easy and left to the reader, but see \cite[\S4.1]{Beresnevich-Dickinson-Velani-07:MR2373145} for a hint.

Since the constant in (\ref{Hux}) implied by the Vinogradov symbol is independent of $f$, this estimate can also be extended to the class $ \ccF(I;c_1,c_2)$. We state this formally as

\begin{theorem}\label{t:03}
Let $I$ be a compact interval and $c_2\ge c_1>0$. Then \eqref{Hux} remains true for any $f\in\ccF(I;c_1,c_2)$, where
the implicit constant does not depend on $\delta$, $Q$, $C$ or $f$.
\end{theorem}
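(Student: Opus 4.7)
The plan is to exploit the $f$-uniformity of Huxley's estimate together with the very definition of $\ccF(I;c_1,c_2)$ as the $C^0$-closure of $\cF(I;c_1,c_2)$. For any $f\in\ccF(I;c_1,c_2)$, pick a sequence $(f_n)_{n\ge1}\subset\cF(I;c_1,c_2)$ with $\|f_n-f\|_{C^0}\to 0$. Each $f_n$ satisfies \eqref{Hux} with a constant depending only on $\varepsilon$ (since the constant in Huxley's bound is already known to be uniform in $f\in\cF(I;c_1,c_2)$), so the task reduces to transferring the bound across the limit.

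The key step is a one-sided semicontinuity of $N_f(Q,\delta,J)$ in $f$ with respect to uniform convergence. The subtlety is that $N_f(Q,\delta,J)$ is defined with a weak inequality $|f(p_1/q)-p_2/q|\le\delta/Q$, so a direct limit would give only $N_f(Q,\delta,I)\le\liminf_n N_{f_n}(Q,\delta',I)$ for $\delta'>\delta$, not for $\delta'=\delta$. To handle this, I would fix an auxiliary $\eta>0$ and observe that for any admissible triple $(q,p_1,p_2)$ counted by $N_f(Q,\delta,I)$ we have
\[
|f_n(p_1/q)-p_2/q| \ \le\ \|f_n-f\|_{C^0}+|f(p_1/q)-p_2/q| \ \le\ \tfrac{(1+\eta)\delta}{Q}
\]
as soon as $\|f_n-f\|_{C^0}\le\eta\delta/Q$; the coprimality and denominator constraints are conditions on $(q,p_1,p_2)$ alone and are unaffected. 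Hence the corresponding rational-point sets satisfy an inclusion, giving $N_f(Q,\delta,I)\le N_{f_n}(Q,(1+\eta)\delta,I)$ for all $n$ sufficiently large.

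Applying Huxley's estimate to $f_n$ with $(1+\eta)\delta$ in place of $\delta$ yields
\[
N_f(Q,\delta,I) \ \ll_\varepsilon\ C^{10/3}\bigl((1+\eta)\delta\bigr)^{1-\varepsilon}Q^2+C^{1/3}Q,
\]
with the implied constant depending only on $\varepsilon$. Letting $\eta\to 0^+$ and using continuity of $t\mapsto t^{1-\varepsilon}$ at $t=\delta$ recovers \eqref{Hux} for $f$, with the same dependence on the constants. I do not expect any genuine obstacle here: the whole argument is essentially a soft limit together with a one-parameter perturbation of $\delta$, and the only point requiring care is the already-mentioned weak-versus-strict inequality in the definition of $N_f$, which is precisely what the $\eta$-perturbation resolves.
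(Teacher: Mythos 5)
Your argument is correct and is essentially the route the paper intends: the paper leaves this proof to the reader (remarking after Theorem~\ref{t:08} that similar approximation arguments apply), relying precisely on the $f$-uniformity of Huxley's constant together with $C^0$-approximation of $f\in\ccF(I;c_1,c_2)$ by elements of $\cF(I;c_1,c_2)$. Your $\eta$-enlargement of $\delta$ to handle the weak inequality, followed by $\eta\to0^+$ (or simply absorbing $(1+\eta)^{1-\ve}\le 2$ into the implied constant), is a correct way of filling in exactly those details.
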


We believe that (\ref{VV2}) can also be extended to $f\in\ccF(I;c_1,c_2)$, however this requires techniques of a very different nature and we plan to return to this issue in a subsequent publication.

\bigskip
\bigskip
\bigskip

For the rest of this section we discuss various consequences of the above results to metric Diophantine approximation. In what follows unless otherwise mentioned we follow the terminology of Bernik and Dodson \cite{BernikDodson-1999} and Kleinbock and Margulis \cite{Kleinbock-Margulis-98:MR1652916}. The foundations of a general metric theory of Diophantine approximation for planar curves was laid by Schmidt \cite{Schmidt-1964a} in 1964 who proved that every non-degenerate planar curve is extremal (in the sense of \Sprindzuk{} \cite{Sprindzuk-1980-Achievements}). Recall that a curve $\cC_f$ defined as a graph of a $C^2$ function $f:I\to\R$  is non-degenerate if $f''(x)\not=0$ almost everywhere (for the definition of non-degeneracy in higher dimensions see \cite{Beresnevich-02:MR1905790} or \cite{Kleinbock-Margulis-98:MR1652916}). In this case we will also say that $f$ is non-degenerate. In particular, by definition, any $f\in\cF(I;c_1,c_2)$ is non-degenerate for any choice of $c_2\ge c_1>0$. In the case of approximation by linear forms Baker \cite{Baker-1978} refined Schmidt's theorem with a Hausdorff dimension result and recently Badziahin \cite{Badziahin-10} established the inhomogeneous version of Baker's theorem. Furthermore, non-degenerate curves have been shown to be of Groshev type \cite{Beresnevich-Bernik-Dickinson-Dodson-99:MR1807055, Bernik-Dickinson-Dodson-98:MR1632820} (see also \cite{Beresnevich-02:MR1905790, Beresnevich-Bernik-Kleinbock-Margulis-02:MR1944505, Bernik-Kleinbock-Margulis-01:MR1829381} for higher dimensional results). Unlike the dual case, the progress with simultaneous approximation was rather slow. For quite a while nothing was known apart from Bernik's Khintchine type theorem for convergence for parabola \cite{Bernik-1979}. However, in the last 5 years or so a general theory of simultaneous Diophantine approximation was developed in \cite{Beresnevich-Dickinson-Velani-07:MR2373145, Vaughan-Velani-2007}, which was subsequently generalised to multiplicative Diophantine approximation \cite{Badziahin-Levesley-07:MR2347267, Beresnevich-Velani-07:MR2285737} and to the inhomogeneous case \cite{Beresnevich-Vaughan-Velani-08-Inhom}. In short, the progress was based on the development of the theory of ubiquitous systems \cite{Beresnevich-Dickinson-Velani-06:MR2184760} and on the study of the distribution of rational points near planar curves. In particular, the various results on metric Diophantine approximation on planar curves inherited the extra smoothness and/or Lipschitz conditions imposed within (\ref{VV2}) and (\ref{BDVV}). Theorem~\ref{t:02} enables us to remove these indeed unnecessary constrains within the divergence results and furthermore broaden them to a genuinely larger class, which is now introduced.

The curve $\cC_f=\{(x,f(x)):x\in I\}$ (resp. the function $f$) will be called \emph{weakly non-degenerate at $x_0\in I$} if there exist constants $c_2\ge c_1>0$ and a compact subinterval $J\subset I$ centred at $x_0\in J$ such that $f|_J\in\ccF(J;c_1,c_2)$. We will say that $\cC_f$ (resp. the function $f$) is \emph{weakly non-degenerate} if $\cC_f$ is weakly non-degenerate at almost every point $x\in I$. Clearly every non-degenerate curve $\cC_f$ is weakly non-degenerate. However the converse is not always true as follows from the example of \S\ref{what_they_are}, which shows that a weakly non-degenerate curve may be degenerate everywhere.

Theorem~\ref{t:04} below gathers the various main consequences of Theorems~\ref{t:02} and \ref{t:03} for the simultaneous Diophantine approximation on weakly non-degenerate planar curves. Before stating the result we introduce some further notation. Given an arithmetic function $\psi:\N\to(0,+\infty)$, let
$$
\lambda_\psi\eqdef\liminf\limits_{h\to\infty}\dfrac{- \log \psi(h)}{\log h}
$$
denote \emph{the lower order of $1/\psi$ at infinity}. Also define the following two sets of $\psi$-approximable points:
$$
\cS_f(\psi)\eqdef\big\{x\in I:\max\{\|qx\|,\|qf(x)\|\}<\psi(q)
\quad\text{ for i.m. }q\in\N\Big\},
$$
and
$$
\cS^*_f(\psi)\eqdef\big\{x\in I:\|qx\|\cdot\|qf(x)\|<\psi(q)
\quad\text{ for i.m. }q\in\N\Big\},
$$
where $\|y\|=\min\{|y-p|:p\in\Z\}$ and `i.m.' stands for `infinitely many'.

\begin{theorem}\label{t:04}
Let $\psi:\N\to(0,+\infty)$ be monotonic and $f:I\to\R$ be a weakly non-degenerate function. Then
\begin{itemize}
\item[{\rm(A)}]
$\cS_f(\psi)$ has full Lebesgue measure in $I$ whenever $\sum_{h=1}^\infty\psi(h)^2=\infty;$
\item[{\rm(B)}]
$\mathcal{H}^s(\cS_f(\psi)) =  \infty$ whenever
$\sum_{h =1}^{ \infty}  h^{1-s}  \psi(h)^{s+1}   = \infty$ and $s\in(\frac12,1);$
\item[{\rm(C)}] $\dim\cS_f(\psi)=s_0\eqdef\frac{2-\lambda_\psi}{1+\lambda_\psi}$ whenever $\lambda_\psi \in [1/2, 1)$ and $f$ is weakly non-degenerate everywhere apart from a set of Hausdorff dimension $\le s_0$.
\item[{\rm(D)}] $\dim\cS^*_f(\psi)=s^*_0\eqdef\frac{2}{1+\lambda_\psi}$ whenever $\lambda_\psi>1$ and $f$ is weakly non-degenerate everywhere apart from a set of Hausdorff dimension $\le s^*_0$. In particular, $\cC_f$ is strongly extremal.
\end{itemize}
\end{theorem}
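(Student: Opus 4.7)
My plan is to recast Theorem~\ref{t:02} as a ubiquity statement for rational points and then invoke the abstract ubiquity framework of \cite{Beresnevich-Dickinson-Velani-06:MR2184760}. Weak non-degeneracy is a local property: by definition, for a.e. $x_0\in I$ there exist $c_1,c_2$ and a compact subinterval $J\ni x_0$ with $f|_J\in\ccF(J;c_1,c_2)$. Countably many such $J$'s cover $I$ up to a Lebesgue null set, so each of (A)--(D) reduces to the corresponding statement under the global assumption $f\in\ccF(I;c_1,c_2)$. In (C) and (D) the exceptional set where this fails has Hausdorff dimension at most $s_0$ (respectively $s^*_0$) and so is absorbed into the final bound.

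For (A) and (B), I fix a small absolute constant $\delta=\delta_0\in(0,1]$ compatible with \eqref{e:003}--\eqref{vb21} for all sufficiently large $Q$. Theorem~\ref{t:02} then asserts that the $\rho(Q)$-neighbourhood of the $x$-coordinates $p_1/q$ with $(q,p_1,p_2)\in\cR^{c_0}_f(Q,\delta_0,J)$ covers at least half of $J$, where $\rho(Q)=C_1(\delta_0 Q^2)^{-1}\asymp Q^{-2}$. This is exactly the hypothesis of the one-dimensional ubiquity theorem with resonant points $\{p_1/q\}$, weights $q$, and ubiquity function $\rho(Q)$; the restriction $q\in(c_0Q,Q]$ built into $\cR^{c_0}_f$ prevents the dyadic scales $Q=2^n$ from double-counting. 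Writing $\|qx\|<\psi(q)$ and $\|qf(x)\|<\psi(q)$ as $|x-p_1/q|<\psi(q)/q$ together with $|f(p_1/q)-p_2/q|<C\psi(q)/q$ via a mean-value estimate, the effective approximation function becomes $\Psi(q)=\psi(q)/q$. The BDV ubiquity theorem then yields full Lebesgue measure of $\cS_f(\psi)$ whenever the ubiquity sum $\sum\Psi(q)\cdot q/\rho(q)\asymp\sum\psi(q)^2$ diverges, giving (A); and $\cH^s(\cS_f(\psi))=\infty$ whenever $\sum q^{1-s}\psi(q)^{s+1}$ diverges for $s\in(1/2,1)$, giving (B). The monotonicity of $\psi$ is used at exactly this step to convert the sum condition into a Borel--Cantelli-type statement.

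Parts (C) and (D) combine the divergence output of (B) with the convergence bound supplied by Theorem~\ref{t:03}. For the lower bound in (C), set $s=s_0-\eta$ and use $\psi(q)=q^{-\lambda_\psi+o(1)}$; the sum $\sum q^{1-s}\psi(q)^{s+1}$ diverges precisely when $s\le(2-\lambda_\psi)/(1+\lambda_\psi)=s_0$, whence (B) gives $\dim\cS_f(\psi)\ge s_0$. The matching upper bound is a Hausdorff--Cantelli argument: cover $\cS_f(\psi)$ by the intervals $\{|x-p_1/q|<\psi(q)/q\}$ for $q\asymp 2^n$; their number is controlled by Theorem~\ref{t:03} with $\delta=Q\psi(Q)$, and summing the $s$-th powers of the radii converges for every $s>s_0$. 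Part (D) follows the same scheme: the multiplicative set $\cS^*_f(\psi)$ is decomposed into slices on which $\|qx\|\in[2^{-k-1},2^{-k})$ and $\|qf(x)\|<2^k\psi(q)$, so that on each slice Theorem~\ref{t:02} (with suitably adjusted $\delta$) again supplies the requisite ubiquity, while Theorem~\ref{t:03} governs the convergence side; strong extremality is the special case $\psi(q)=q^{-1}(\log q)^{-\eta}$. The main obstacle I anticipate is careful bookkeeping rather than a single hard step: threading the explicit constants from Theorem~\ref{t:02} through the BDV framework, and in (D) patching together the slicing estimates over a logarithmic range of scales while preserving the exact divergence/convergence dichotomy.
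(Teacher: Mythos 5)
Your overall route is the one the paper takes: run the ubiquity proofs of Theorems~1, 3 and 4 of \cite{Beresnevich-Dickinson-Velani-07:MR2373145} with Theorem~\ref{t:02} in place of their Theorem~7 and Theorem~\ref{t:03} in place of Huxley's bound, reduce weak non-degeneracy to the global case by a countable cover, and treat (D) as in \cite{Beresnevich-Velani-07:MR2285737}. However, there is a genuine gap in how you feed Theorem~\ref{t:02} into the ubiquity machinery for (A) and (B): you fix an absolute constant $\delta=\delta_0$ and take as resonant points the $p_1/q$ with $(q,p_1,p_2)\in\cR^{c_0}_f(Q,\delta_0,J)$. Such a triple only satisfies $|f(p_1/q)-p_2/q|\le\delta_0/Q$, so for $x$ with $|x-p_1/q|<\psi(q)/q$ the best the mean-value estimate gives is $\|qf(x)\|\ll\psi(q)+\delta_0$, which is useless once $\psi(q)=o(1)$: the limsup set produced by ubiquity with these resonant points is \emph{not} contained in $\cS_f(\psi)$. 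The condition you yourself need, $|f(p_1/q)-p_2/q|<C\psi(q)/q$, is exactly what the fixed-$\delta_0$ resonant set fails to provide. The correct application --- and the reason Theorem~\ref{t:02} is stated with a variable $\delta$ allowed to go down to $\asymp Q^{-1}$ (the condition $Q\delta\ge C_2$ in \eqref{e:003}) --- is to take $\delta=\delta(Q)\asymp Q\psi(Q)$, so that the rational points produced lie within $\asymp\psi(q)/q$ of the curve and the ubiquity function becomes $\rho(Q)\asymp(Q^{3}\psi(Q))^{-1}$; the monotonicity of $\psi$ and the divergence of $\sum\psi(h)^2$ are then used, via condensation, both to verify the divergence hypothesis of the ubiquity theorem and to deal with the scales at which $Q\psi(Q)$ falls below the admissible threshold. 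This is precisely how the cited proofs proceed, and the paper's argument consists of invoking them with these substitutions.

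Two smaller points. The mean-value step requires $f\in C^{1}(I)$, which for $f\in\ccF(I;c_1,c_2)$ is not automatic but is supplied by Theorem~\ref{t:06}; the paper explicitly flags this dependence, so it should be cited rather than assumed. And in (D), strong extremality is not the case $\psi(q)=q^{-1}(\log q)^{-\eta}$, for which $\lambda_\psi=1$ and the hypothesis $\lambda_\psi>1$ fails; it follows by taking $\psi(q)=q^{-2-\ve}$, so that $\lambda_\psi=2+\ve$ and $\dim\cS^{*}_f(\psi)=2/(3+\ve)<1$, whence the relevant set is Lebesgue null for every $\ve>0$.
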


The proofs of parts (A), (B) and (C) of Theorem~\ref{t:04} are essentially the same as those of Theorems~1, 3 and 4 in \cite{Beresnevich-Dickinson-Velani-07:MR2373145} with the only differences being that we use our Theorem~\ref{t:02} instead of \cite[Theorem~7]{Beresnevich-Dickinson-Velani-07:MR2373145} and we use Theorem~\ref{t:03} instead of Huxley's original result (\ref{Hux}).
Note also that the proofs make use of continuous differentiability of $f$ - a property that will be shown in the next section (Theorem~\ref{t:06}).
The proof of part (D) of Theorem~\ref{t:04} follows the line of argument of Theorems~6 and $6^*$ from \cite{Beresnevich-Velani-07:MR2285737}. For the modifications are obvious we leave further details out. Using our Theorem~\ref{t:02} in combination with the ideas of \cite{Beresnevich-Vaughan-Velani-08-Inhom} it is also straightforward to state and prove an inhomogeneous version of Theorem~\ref{t:04}.

\medskip

Weakly non-degenerate curves are characterised by the property that locally they can be perturbed into an arbitrarily close `properly' non-degenerate curve with `rigid' bounds on their curvature.
By these we mean that the constants $c_1$ and $c_2$ appearing in (\ref{cond_g2p}) are not varying as we perturb the curve.
Considering how Diophantine properties of manifolds are affected by small perturbations is not absolutely new. For example, Rynne \cite{Rynne-03:MR1950434} obtained a negative result by showing that certain Diophantine properties of non-degenerate manifolds are \textsc{not} preserved under small perturbations even in the $C^k$ topology. It is likely that establishing positive results will require imposing some kind of rigidity on the geometry of perturbed manifolds, likewise conditions (\ref{cond_g2p}) hold uniformly within $\cF(I;c_1,c_2)$. This gives rise to the following

\medskip

\noindent\textbf{General problem.} Find a `reasonable' generalisation of weak non-degeneracy for manifolds in higher dimensions and prove that such manifolds are (strongly) extremal and/or satisfy the analogues of the Khintchine-Groshev theorem (see \cite{Beresnevich-02:MR1905790, Beresnevich-Bernik-Kleinbock-Margulis-02:MR1944505, Beresnevich-Dickinson-Velani-07:MR2373145, BernikDodson-1999, Bernik-Kleinbock-Margulis-01:MR1829381, Kleinbock-Margulis-98:MR1652916} for appropriate terminology and related results).

\section{Functions in $\ccF(I;c_1,c_2)$\/: what they are}\label{what_they_are}

In this section we make an attempt to understand the main object we study in this paper -- functions in $\ccF(I;c_1,c_2)$. In particular, it is mandatory to understand whether this class is any bigger than $\cF(I;c_1,c_2)$.

Since the functions $f\in\ccF(I;c_1,c_2)$ are obtained as limits of continuous and even twice differentiable functions in the uniform convergence topology, they are continuous. As is well known, differentiability is not preserved by the limit functions in the $C^0$ topology; for example, any continuous function on a compact interval can be uniformly approximated by a polynomial (Weierstrass' theorem). However, we shall see that function in $\ccF(I;c_1,c_2)$ are indeed continuously differentiable. Furthermore, they happen to have the second derivatives almost everywhere. On the other hand, we shall see that the second derivative may be non-existent on an everywhere dense set and so may be discontinuous everywhere. The latter fact in particular, shows that the class $\ccF(I;c_1,c_2)$ is genuinely bigger than $\cF(I;c_1,c_2)$ and thus the notion of weak non-degeneracy is not vacuous. Throughout this section, $I=[x_1,x_2]$ is a compact interval and $c_2\ge c_1>0$.

We begin by investigating the convexity properties of functions in $\cF(I;c_1,c_2)$. The function $f:I\to\R$ will be called \emph{$(c_1,c_2)$-convex}\/ if for every $x\in I$ and every $\delta>0$ such that $x\pm\delta\in I$
\begin{equation} \label{cond_g2finite}
    c_1 \leq \frac{f(x+\delta)-2f(x)+f(x-\delta)}{\delta^2} \leq c_2\,.
\end{equation}
In what follows $\cC(I;c_1,c_2)$ will be the set of $(c_1,c_2)$-convex functions.

\emph{Historical note.} In the case $c_2=+\infty$ the r.h.s. of (\ref{cond_g2finite}) imposes no restriction on $f$. Consequently, $f$ is called \emph{$c_1$-convex} or simply \emph{strongly convex}. The strongly convex functions (also known as \emph{uniformly convex functions}) were introduced by Levitin and Poljak \cite{Levitin-Poljak-66:MR0211590}, and have been widely used over the past 50 years mostly in optimization and mathematical finance. Geometrically, the function $f$ is $c_1$-convex if for every $x$ in the interior of $I$ the radius $R$ of the supporting circle of the graph $\cC_f$ is bounded above by $c_1^{-1}$ (see also \cite{Vial-82:MR637263, Vial-83:MR707055} for other properties of strongly convex functions). In the case $c_2<+\infty$ the r.h.s. inequality of (\ref{cond_g2finite}) implies that $R$ is also bounded below by $c_2^{-1}$. In the case $f\in C^2(I)$ we have that
$f\in\cC(I;c_1,c_2)$ if and only if $c_1\le f''(x)\le c_2$ for all $x\in I$. In fact, the following theorem shows that $\cC(I;c_1,c_2)$ coincides with the topological closure of the set of functions satisfying the latter condition.

\begin{theorem}\label{t:05}
$\ccF(I;c_1,c_2)=\cC_\pm(I;c_1,c_2)\eqdef \{\pm f:f\in\cC(I;c_1,c_2)\}$.
\end{theorem}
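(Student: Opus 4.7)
The plan is to prove the two inclusions separately. The direction $\ccF(I;c_1,c_2)\subseteq\cC_\pm(I;c_1,c_2)$ is essentially soft, while $\cC_\pm(I;c_1,c_2)\subseteq\ccF(I;c_1,c_2)$ is the substantive content and will be obtained by an explicit smooth approximation built from Bernstein polynomials.

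For the easy direction I would first observe that $\cC(I;c_1,c_2)$ is $C^0$-closed: the defining inequality \eqref{cond_g2finite} is a pointwise inequality in function values and therefore survives uniform convergence. Consequently $\cC_\pm=\cC\cup(-\cC)$, a finite union of closed sets, is itself $C^0$-closed. To see that $\cF(I;c_1,c_2)\subseteq\cC_\pm$, note that for $f\in\cF$ the continuous function $f''$ satisfies $|f''|\ge c_1>0$ and hence has constant sign on $I$; the Taylor identity
\begin{equation*}
f(x+\delta)-2f(x)+f(x-\delta)\ =\ \int_0^\delta\!\!\int_{-s}^{s}f''(x+t)\,dt\,ds
\end{equation*}
then places $f$ or $-f$ in $\cC(I;c_1,c_2)$. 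Passing to $C^0$-closures gives $\ccF\subseteq\cC_\pm$.

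For the hard direction the symmetry $f\mapsto-f$ reduces the task to showing that every $f\in\cC(I;c_1,c_2)$ admits a sequence $f_n\in\cF(I;c_1,c_2)$ with $f_n\to f$ uniformly. After an affine change of variable (which only rescales $c_1,c_2$ by $|I|^2$) I assume $I=[0,1]$ and invoke the classical identity
\begin{equation*}
B_n(f)''(x)\ =\ n(n-1)\sum_{k=0}^{n-2}\bigl[f(\tfrac{k+2}{n})-2f(\tfrac{k+1}{n})+f(\tfrac{k}{n})\bigr]\binom{n-2}{k}x^k(1-x)^{n-2-k}.
\end{equation*}
Applying the $(c_1,c_2)$-convexity of $f$ at mesh width $1/n$ and using $\sum_k\binom{n-2}{k}x^k(1-x)^{n-2-k}=1$ yields $B_n(f)''(x)\in[c_1(n-1)/n,\,c_2(n-1)/n]$. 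To close the shortfall at the lower end I set
\begin{equation*}
f_n(x)\ \eqdef\ B_n(f)(x)+\tfrac{c_1}{2n}\,x^2,
\end{equation*}
so that $f_n''(x)\in[c_1,\,c_2-(c_2-c_1)/n]\subseteq[c_1,c_2]$, giving $f_n\in\cF([0,1];c_1,c_2)$. Bernstein's approximation theorem applied to the (continuous) $f$, together with $\|c_1x^2/(2n)\|_{C^0[0,1]}\to 0$, yields $f_n\to f$ uniformly; undoing the rescaling transports the approximation back to $I$ with the original constants.

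The principal technical obstacle is to establish continuity of an arbitrary $f\in\cC(I;c_1,c_2)$ since Bernstein's theorem needs continuity as input. My plan is to exploit \emph{both} sides of \eqref{cond_g2finite}: the auxiliary functions $g\eqdef f-\tfrac{c_1}{2}x^2$ and $h\eqdef\tfrac{c_2}{2}x^2-f$ are simultaneously midpoint convex, their second differences being nonnegative by the lower and upper inequalities in \eqref{cond_g2finite} respectively. The quantitative bounds $\Delta^2 g\le(c_2-c_1)\delta^2$ and $\Delta^2 h\le(c_2-c_1)\delta^2$ exclude the pathological, nowhere-measurable midpoint convex functions arising from Hamel-basis constructions and force $g$ and $h$ to be locally bounded on $\intr I$. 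A classical result then promotes locally bounded midpoint convex functions to convex ones on $\intr I$, so $g,h$ (and hence $f$) are continuous on $\intr I$; a short boundary argument, again using the upper second-difference bound, extends continuity to all of $I$ and makes the Bernstein construction above apply.
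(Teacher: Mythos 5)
Your treatment of the easy inclusion is fine, and the core of your hard-direction argument is correct and genuinely different from the paper's. The paper extends $f$ to $\R$ by constants, convolves with a $C^\infty$ bump supported on $[-1,1]$, and passes the second-difference bounds \eqref{cond_g2finite} through the convolution to get $c_1\le f_\ve''\le c_2$ first on interior subintervals and then, by continuity of $f_\ve''$, on all of $I$. You instead use the exact identity for $B_n(f)''$ and repair the $(n-1)/n$ shortfall with the correction $\tfrac{c_1}{2n}x^2$; this is clean, avoids the extension-and-interior-interval bookkeeping, and even produces polynomial approximants. Both routes, however, need $f$ to be continuous as input (the paper for uniform convergence of the mollifications, you for Bernstein's theorem), and this is exactly where your proposal breaks down.

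The continuity step as you propose it is a genuine gap, and in fact the claimed implication is false. If $a:\R\to\R$ is an additive, non-linear function built from a Hamel basis, then $a(x+\delta)-2a(x)+a(x-\delta)=0$ identically, so $f\eqdef a+\tfrac{c}{2}x^2$ with any $c\in[c_1,c_2]$ satisfies \eqref{cond_g2finite} exactly, yet is non-measurable and unbounded on every subinterval of $I$. Your auxiliary function $g=f-\tfrac{c_1}{2}x^2=a+\tfrac{c-c_1}{2}x^2$ is then midpoint convex and obeys the quantitative bound $\Delta^2 g\le(c_2-c_1)\delta^2$, but is nowhere locally bounded: no bound on symmetric second differences can exclude Hamel-type pathologies, precisely because additive functions have identically vanishing second differences. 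Consequently continuity of a $(c_1,c_2)$-convex function is not provable from \eqref{cond_g2finite} alone; it must be assumed (equivalently, measurability or local boundedness on $\intr I$, after which Bernstein--Doetsch yields convexity and continuity, with a separate argument at the endpoints). The paper's own proof silently makes this assumption — it simply asserts that $\hat f$ is continuous — so continuity is in effect part of the intended definition of $(c_1,c_2)$-convexity. With that hypothesis added your Bernstein construction goes through and gives a complete proof; without it, the inclusion $\cC_\pm(I;c_1,c_2)\subset\ccF(I;c_1,c_2)$ fails outright, since uniform limits of continuous functions are continuous.
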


We will use well known properties of convolution.
Given $\phi,\psi:\R\rightarrow\R$, the \emph{convolution} of $\phi$ and $\psi$ is the function $(\phi\star\psi):\R\to\R$ defined by
$$
(\phi\star\psi)(x)\eqdef\int_{-\infty}^{+\infty}\phi(x-t)\psi(t)d t.
$$
There are various assumption ensuring its existence. We will use the following well known

\begin{lemma}\label{convolution_Cinfty}
If $\phi \in C^{\infty}(\R)$, $\int_{-\infty}^{+\infty}|\phi(x)|d x<\infty$ and $\psi:\R\rightarrow\R$ is bounded and integrable, then $\phi\star\psi\in C^{\infty}(\R)$.
\end{lemma}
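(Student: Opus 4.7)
The plan is to show by induction on $k\ge 0$ that $\phi\star\psi$ is of class $C^k$ with
$$
(\phi\star\psi)^{(k)}(x)=g_k(x)\eqdef\int_{-\infty}^{+\infty}\phi^{(k)}(x-t)\psi(t)dt,
$$
which immediately yields $\phi\star\psi\in C^{\infty}(\R)$ since $k$ is arbitrary.

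For the base case ($k=0$, continuity), I would estimate
$$
|(\phi\star\psi)(x+h)-(\phi\star\psi)(x)|\le \|\psi\|_{\infty}\int_{-\infty}^{+\infty}|\phi(x+h-t)-\phi(x-t)|dt=\|\psi\|_{\infty}\,\|\tau_h\phi-\phi\|_{L^1(\R)},
$$
where $\tau_h\phi(y)\eqdef\phi(y-h)$, and invoke the classical fact that translation acts continuously in $L^1$: $\|\tau_h\phi-\phi\|_{L^1(\R)}\to 0$ as $h\to 0$ for every $\phi\in L^1(\R)$. This gives the uniform continuity of $\phi\star\psi$, and the same argument applied to $\phi^{(k)}$ in place of $\phi$ yields continuity of $g_k$ at every step of the induction (provided $\phi^{(k)}\in L^1(\R)$, see below).

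For the inductive step, I would use the integrated mean value identity
$$
\phi^{(k)}(x+\eta-t)-\phi^{(k)}(x-t)=\int_0^{\eta}\phi^{(k+1)}(x+s-t)ds,
$$
multiply by $\psi(t)$, integrate over $t$, and exchange the order of integration by Fubini (whose absolute-value hypothesis reduces, after bounding $|\psi|\le\|\psi\|_{\infty}$, to the finiteness of $|\eta|\cdot\|\phi^{(k+1)}\|_{L^1}$). This yields
$$
g_k(x+\eta)-g_k(x)=\int_0^{\eta}g_{k+1}(x+s)ds.
$$
Dividing by $\eta$ and letting $\eta\to 0$, the continuity of $g_{k+1}$ (base case applied to $\phi^{(k+1)}$) together with the fundamental theorem of calculus gives $g_k'(x)=g_{k+1}(x)$, which closes the induction.

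The main obstacle is that the stated hypotheses only guarantee $\phi^{(k)}\in L^1(\R)$ at $k=0$, whereas the induction requires this integrability (as well as boundedness, used indirectly through Fubini) at every level. I would resolve this either by adding the standing assumption that all derivatives $\phi^{(k)}$ are bounded and in $L^1(\R)$, or, as is the case for the intended application in the proof of Theorem~\ref{t:05}, by restricting to $\phi$ of compact support, for which every $\phi^{(k)}$ is automatically bounded and compactly supported. Either way, the inductive scheme above then applies verbatim and produces the claimed smoothness of $\phi\star\psi$.
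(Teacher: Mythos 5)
The paper gives no proof of Lemma~\ref{convolution_Cinfty} at all: it is invoked as a ``well known'' fact, so there is nothing internal to compare your argument with. Your scheme is the standard one --- induction on $k$ with $(\phi\star\psi)^{(k)}=\phi^{(k)}\star\psi$, continuity at each level from the $L^1$-continuity of translations, and the inductive step via the identity $g_k(x+\eta)-g_k(x)=\int_0^{\eta}g_{k+1}(x+s)\,ds$ (Fubini) followed by the fundamental theorem of calculus --- and the mechanics are correct as far as they go.

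The obstacle you flag is genuine, and it is worth stressing that it is a defect of the statement, not merely of your method: the hypotheses control $\phi$ but none of its derivatives, so $\phi'\star\psi$ need not even be a convergent integral, and in fact the conclusion can fail. A resonance construction shows this: take $\phi=\sum_n H_n\cos\big(b_n(\cdot-s_n)\big)\chi(\cdot-s_n)$ and $\psi=\sum_n\cos\big(b_n(\cdot+s_n)\big)\chi\big((\cdot+s_n)/\delta_n\big)$ with a fixed smooth bump $\chi$ supported in $[-\tfrac12,\tfrac12]$, widely separated centres $s_n$, $H_n=\delta_n=16^{-n}$, and $b_n$ growing so fast that $H_n\delta_n b_n\to\infty$. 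Then $\phi\in C^{\infty}(\R)\cap L^1(\R)$ and $\psi$ is bounded and integrable, but near the origin only the diagonal terms contribute and $\phi\star\psi$ equals a $C^1$ function plus a lacunary series $\sum_n c_n\cos(b_nx)$ with $c_n\asymp H_n\delta_n$, $\sum_n c_n<\infty$ and $c_nb_n\to\infty$; such a sum is not differentiable (a lacunary trigonometric series differentiable at even one point must have $c_nb_n\to0$). So an additional hypothesis on the derivatives of $\phi$ is unavoidable, and your proposed fixes are exactly right. The second one matches the paper's actual use of the lemma: in the proof of Theorem~\ref{t:05} the smooth factor in \eqref{def_ge} is the bump $B(\cdot/\ve)$ from \eqref{def_B}, which lies in $C_c^{\infty}(\R)$, and for compactly supported $\phi$ your induction runs verbatim assuming only that $\psi$ is bounded and measurable --- which is also what is actually available there, since $\hat f$ is bounded and continuous but not integrable on $\R$. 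With that reformulation your proof is complete and is the argument the authors implicitly rely on.
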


\begin{proofscf}
As noted above $\cF(I;c_1,c_2)\subset\cC_\pm(I;c_1,c_2)$. In fact, the latter is an easy consequence of Taylor's formula. Clearly,  taking the limit $f\to f_0$, where $f\in\cF(I;c_1,c_2)$ and $f_0\in\ccF(I;c_1,c_2)$, preserves (\ref{cond_g2finite}), thus showing the inclusion $\ccF(I;c_1,c_2)\subset\cC_\pm(I;c_1,c_2)$. The main substance of the proof is therefore to establish that $\cC_\pm(I;c_1,c_2)\subset\ccF(I;c_1,c_2)$. Let $f\in\cC(I;c_1,c_2)$. Define $\hat{f}:\R\rightarrow\R$ by setting
$$
    \hat{f}(x)\eqdef\left\{
    \begin{array}{ccl}
    f(x) & \text{ if } & x\in I\eqdef[x_1,x_2],\\
    f(x_1) &  \text{ if } & x<x_1 \\
    f(x_2) & \text{ if } & x>x_2.
    \end{array}\right.
$$
Clearly, $\hat f$ is uniformly bounded and continuous on $\R$ and identically equal to $f$ on $I$. Further, since $I$ is compact and $\hat f$ is constant outside $I$, it is easily seen that $\hat f$ is uniformly continuous on $\R$. Define $B:\R\rightarrow\R$ by setting
\begin{equation}\label{def_B}
    B(x)\eqdef\left\{
    \begin{array}{ccl}
      \displaystyle \exp\Big(-\tfrac{1}{(x-1)^2}-\tfrac{1}{(x+1)^2}\Big) & & \text{if}\quad  |x|< 1, \\[1.5ex]
      0 & & \text{otherwise}.
    \end{array}
\right.
\end{equation}
It is easily verified that $B\in C^\infty(\R)$ and is supported on $[-1,1]$. Then
\begin{equation}\label{def_w}
    w\eqdef\int_{-\infty}^{+\infty}B(x)d x=\int_{-1}^{1}B(x)d x.
\end{equation}
Given an $\ve>0$, define $ f_{\ve}:\R\rightarrow\R$ by
\begin{equation}\label{def_ge}
     f_{\ve}(x)\eqdef\frac{1}{w\ve}\int_{-\infty}^{+\infty}B\Big(\frac{x-y}{\ve}\Big) \hat{f}(y)d y.
\end{equation}
The function $ f_{\ve}(x)$ is $1/(w\ve)$ times the convolution of $\hat{f}(x)$ and $B(\frac{x}{\ve})$. By Lemma~\ref{convolution_Cinfty}, $ f_{\ve}(x)\in C^{\infty}(\R)$.
Making the change of variables $z=\frac{x-y}{\ve}$ transforms (\ref{def_ge}) into
\begin{equation}\label{def_ge2}
     f_{\ve}(x)=\frac{1}{w}\int_{-\infty}^{+\infty}B(z)\hat{f}(x-\ve z)d z=\frac{1}{w}\int_{-1}^{1}B(z)\hat{f}(x-\ve z)d z.
\end{equation}
By the uniform continuity of $\hat f$, for any $\eta>0$ there is an $\ve>0$ such that
 \begin{equation}\label{vb22}
    \sup_{|x'-x|\le \ve}|\hat f(x')-\hat f(x)|<\eta.
\end{equation}
Then
$$
\begin{array}{rcl}
\displaystyle\sup_{x\in\R}| f_{\ve}(x)-\hat{f}(x)| & \stackrel{\eqref{def_w},\,\eqref{def_ge2}}{=} &
\displaystyle\sup_{x\in\R}\left|\frac1w\int_{-1}^1B(z)\big(\hat f(x-\ve z)-\hat{f}(x)\big)d z\right|\\[3ex]
&\le &
\displaystyle\frac1w\int_{-1}^1B(z)d z\sup_{|x'-x|\le \ve}\big|\hat f(x')-\hat{f}(x)\big|\ \stackrel{\eqref{def_w},\,\eqref{vb22}}{<}\eta.
\end{array}
$$
This means that $ f_{\ve}$ converges to $\hat f$ uniformly on $\R$ as $\ve\to0$.

Since $ f_{\ve}\in C^\infty(\R)$, using Taylor's formula we verify that  for all $x\in \R$
\begin{equation}\label{vb23}
    \lim_{\delta\rightarrow 0}\frac{ f_{\ve}(x+\delta)-2 f_{\ve}(x)+ f_{\ve}(x-\delta)}{\delta^2}= f_{\ve}''(x).
\end{equation}
By (\ref{def_ge2}),
\begin{equation}\label{ge2p}
    \begin{array}[b]{lc}
    \displaystyle\frac{ f_{\ve}(x+\delta)-2 f_{\ve}(x)+ f_{\ve}(x+\delta)}{\delta^2}\\[2ex] \displaystyle \qquad=
    \frac{1}{w}\int_{-1}^{1}B(z)\frac{\hat{f}(x-\delta z+\delta)-2\hat{f}(x-\delta z)+\hat{f}(x-\delta z-\delta)}{\delta^2}d z.
    \end{array}
\end{equation}

When $x\in[x_1+2\delta,x_2-2\delta]$, where $[x_1,x_2]=I$, we have that $x-\delta z\pm\delta\in I$ for any $z\in[-1,1]$. Then, since $\hat f\in\cC(I;c_1,c_2)$, the fraction within the r.h.s. of (\ref{ge2p}) is bounded between $c_1$ and $c_2$. Consequently, by (\ref{def_w}), the l.h.s. of (\ref{ge2p}) is bounded between $c_1$ and $c_2$ for all $x\in[x_1+\delta,x_2-\delta]$. By (\ref{vb23}),
\begin{equation} \label{cond_g2p_v2}
    c_1\leq f_{\ve}''(x)\leq c_2
\end{equation}
for all $x\in[x_1+2\delta,x_2-2\delta]$. Since $\delta$ can be made arbitrarily small and $ f_{\ve}''(x)$ is continuous on $I$, (\ref{cond_g2p_v2}) must hold on $I$. This means that $ f_{\ve}\in\cF(I;c_1,c_2)$ and consequently $f$ belongs to $\ccF(I;c_1,c_2)$ as a uniform limit of $f_\ve$. In the case $-f\in\cC(I;c_1,c_2)$ taking $- f_{\ve}$ does the job and completes the proof.
\end{proofscf}

\medskip

We now utilise the characterisation of functions in $\ccF(I;c_1,c_2)$ given by Theorem~\ref{t:05} to show that these functions are actually continuously differentiable.

\begin{theorem} \label{t:06}
$\cC(I;c_1,c_2)\subset C^1(I)$.
\end{theorem}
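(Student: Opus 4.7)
The plan is to exploit the symmetry in \eqref{cond_g2finite}: each $f\in\cC(I;c_1,c_2)$ is simultaneously controlled from below and from above by quadratic second differences, so that $f$ is, up to a quadratic correction, both convex and concave in the classical sense. Concretely, set
\[
g(x)\eqdef f(x)-\tfrac{c_1}{2}x^2,\qquad h(x)\eqdef \tfrac{c_2}{2}x^2-f(x).
\]
Direct computation shows that the second difference quotient of $g$ equals that of $f$ minus $c_1$, while the second difference quotient of $h$ equals $c_2$ minus that of $f$. Hence \eqref{cond_g2finite} translates into
\[
g(x+\delta)+g(x-\delta)\ge 2g(x)\quad\text{and}\quad h(x+\delta)+h(x-\delta)\ge 2h(x)
\]
for every admissible $x,\delta$, i.e.\ both $g$ and $h$ are midpoint convex. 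Since $f$ is continuous (by Theorem~\ref{t:05}, which identifies $\cC(I;c_1,c_2)$ with $\ccF(I;c_1,c_2)$ up to sign, whose elements are uniform limits of continuous functions), so are $g$ and $h$, and midpoint convexity plus continuity yields genuine convexity on $I$.

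Next I invoke standard convex analysis: a convex function on an interval admits one-sided derivatives at every interior point, with the right derivative no smaller than the left. Writing
\[
f(x)=g(x)+\tfrac{c_1}{2}x^2=\tfrac{c_2}{2}x^2-h(x)
\]
gives $f'_+(x)-f'_-(x)=g'_+(x)-g'_-(x)\ge 0$ on the one hand and $f'_+(x)-f'_-(x)=-(h'_+(x)-h'_-(x))\le 0$ on the other. These inequalities force $f'_+(x)=f'_-(x)$, so $f$ is differentiable at every interior point of $I$.

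To upgrade differentiability to $C^1$, I use that for a convex function the right derivative $g'_+$ is non-decreasing and right-continuous, while the left derivative $g'_-$ is non-decreasing and left-continuous. Since $g'_+=g'_-$ throughout the interior by the previous step, the common value $g'$ is both left- and right-continuous, hence continuous, and therefore $f'=g'+c_1 x$ is continuous on $(x_1,x_2)$. At the endpoints the one-sided derivatives exist by the same convex-function argument, and their limits as one approaches $x_1$ or $x_2$ are governed by the monotonicity of $g'$, so $f'$ extends continuously to $I=[x_1,x_2]$. The only real subtlety is securing continuity of $f$ at the outset; once this is in hand, everything else reduces to classical facts about convex functions.
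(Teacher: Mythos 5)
Your proof is correct, and it takes a genuinely different route from the paper's. You write $f=\tfrac{c_1}{2}x^2+g=\tfrac{c_2}{2}x^2-h$ with $g,h$ midpoint convex (and, by continuity, convex), and obtain differentiability by squeezing the jump $f'_+(x)-f'_-(x)$ between the nonnegative jump of $g$ and the nonpositive jump of $-h$; continuity of $f'$ then follows from the standard one-sided continuity of the monotone functions $g'_\pm$. The paper instead works with $f$ itself as a convex function: it uses the integral representation of $f$ through its subdifferential (Theorem~1.6.1 of Niculescu--Persson) to show that a corner $f'_-(x_0)<f'_+(x_0)$ forces the symmetric second difference quotient to grow like $\bigl(f'_+(x_0)-f'_-(x_0)\bigr)/\delta$, contradicting the upper bound $c_2$ in \eqref{cond_g2finite}, and then proves continuity of $f'$ by a second pinching argument based on the same integral representation and monotone limits. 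Both proofs ultimately exploit the upper bound $c_2$ (your convex $h$ plays exactly the role of the paper's blow-up contradiction), but your version avoids the subdifferential machinery altogether and is softer. Two minor remarks: your appeal to Theorem~\ref{t:05} for continuity of $f$ is legitimate within the paper's logical order (Theorem~\ref{t:05} precedes Theorem~\ref{t:06} and does not rely on it) and in fact parallels the paper, whose own proof simply treats $f$ as convex, i.e.\ implicitly assumes this regularity; and at the endpoints the finiteness of the one-sided derivatives, which you pass over quickly, also drops out of your two-sided decomposition (the $g$-representation bounds the endpoint difference quotients on one side, the $h$-representation on the other), after which the monotonicity argument you indicate gives continuity of $f'$ up to the boundary --- a point the paper's proof does not address explicitly.
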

\begin{proof}
Let $f\in\cC(I;c_1,c_2)$. As a convex function $f$ has left and right derivatives $f_+'(x)$ and $f_-'(x)$ at each point $x$ of the interior of $I$ (to be denoted as $\intr I$) -- see e.g. \cite[Theorem~1.3.3]{Nicolescu-Persson-06:ISBN9780387243009}. Furthermore, for all $x,y\in\intr I$ such that $x<y$
\begin{equation}\label{ld_leq_rd}
    f_-'(x)\leq f_+'(x)\leq f_-'(y)\leq f_+'(y).
\end{equation}
By definition, $f_-'(x)=f_+'(x)$ if and only if $f$ is differentiable at $x$. Assume for the moment that $f$ is not differentiable at some point $x_0\in\intr I$, that is, by (\ref{ld_leq_rd}),
\begin{equation}\label{fin_diff_lr}
    f_-'(x_0)<f_+'(x_0).
\end{equation}
Define the auxiliary function $t:I\rightarrow\R$ by setting
\begin{equation}\label{def_t}
    t(x)\eqdef\left\{\begin{array}{ccl}
    f_-'(x) & \text{if} & x<x_0\\
    f_+'(x) & \text{if} & x\geq x_0
    \end{array}\right.
\end{equation}
This function is known as a \emph{subdifferential} for $f$ (see \cite[\S5]{Nicolescu-Persson-06:ISBN9780387243009} for its definition and basic properties). By Theorem~1.6.1 of~\cite{Nicolescu-Persson-06:ISBN9780387243009}, for any $a,b\in I$ we have that
\begin{equation}\label{intPropertySubDifferential}
    f(b)-f(a)=\int_a^b t(x)d x.
\end{equation}
The latter inequality implies that
$$
\begin{array}{ccl}
    \displaystyle\frac{f(x_0+\delta)+f(x_0-\delta)-2f(x_0)}{\delta^2} & = & \displaystyle\frac{\int_{x_0}^{x_0+\delta} f_+'(x)d x-\int_{x_0-\delta}^{x_0} f_-'(x)d x}{\delta^2} \\[2ex] &\stackrel{\eqref{ld_leq_rd}}{\leq}&  \displaystyle\frac{f_+'(x_0)\delta-f_-'(x_0)\delta}{\delta^2}\\[2ex]
    &=&\displaystyle\frac{f_+'(x_0)-f_-'(x_0)}{\delta}.
\end{array}
$$
By (\ref{fin_diff_lr}), the latter fraction tends to infinity as $\delta\to0$. This contradicts to the fact that $f\in\cC(I;c_1,c_2)$. The contradiction shows that $f$ is everywhere differentiable.

To complete the proof we have to verify that $f'$ is continuous. Property~(\ref{ld_leq_rd}) implies that the function $f'$ is monotonically increasing on $I$. So, by the Monotone Convergence Theorem, for any $x_0\in\intr I$ there exist left and right limits of $f'$ and furthermore we have that
\begin{equation}\label{leftLimit_rightLimit}
    \lim_{x\rightarrow x_0^-}f'(x)\leq\lim_{x\rightarrow x_0^+}f'(x).
\end{equation}
By definition,
\begin{equation} \label{CisC1:step1}
    f_-'(x_0)=\lim_{\delta\rightarrow 0+}\frac{f(x_0)-f(x_0-\delta)}{\delta}.
\end{equation}
By Theorem~1.6.1 of~\cite{Nicolescu-Persson-06:ISBN9780387243009}, for $\delta>0$ we have
\begin{equation} \label{CisC1:step2}
    f(x_0)-f(x_0-\delta)=\int_{x_0-\delta}^{x_0}f'(x)d x
\end{equation}
and, by the monotonicity of $f'$,
\begin{equation} \label{CisC1:step3}
   \int_{x_0-\delta}^{x_0}f'(x)d x\leq\delta\lim_{x\rightarrow x_0^-}f'(x).
\end{equation}
Combining (\ref{CisC1:step1}), (\ref{CisC1:step2}) and~(\ref{CisC1:step3}) shows that
\begin{equation} \label{CisC1:step4}
    f_-'(x_0)\leq\lim_{x\rightarrow x_0^-}f'(x).
\end{equation}
Similarly, we establish that
\begin{equation} \label{CisC1:step5}
    \lim_{x\rightarrow x_0^+}f'(x)\le f_+'(x_0).
\end{equation}
By the differentiability of $f$, $f_-'(x_0)=f_+'(x_0)$. Thus~(\ref{CisC1:step4}), (\ref{CisC1:step5}) together with~(\ref{leftLimit_rightLimit}) imply that
$\lim_{x\rightarrow x_0^-}f'(x)=\lim_{x\rightarrow x_0^+}f'(x)$ and complete the proof.
\end{proof}

\medskip

Since functions in $\ccF(I;c_1,c_2)$ are continuously differentiable, we are able to simplify property (\ref{cond_g2finite}) and thus give an alternative description of the class $\ccF(I;c_1,c_2)$. With this goal in mind we now introduce further notation. Let $\cC^1(I;c_1,c_2)$ be the set of $C^1(I)$ functions such that
\begin{equation} \label{ie_derivative}
    c_1\leq\frac{f'(x+\delta)-f'(x)}{\delta}\leq c_2
\end{equation}
for any $x\in I$ and $\delta>0$ with $x+\delta\in I$. Also let $\cC_{\pm}^1(I;c_1,c_2)=\{\pm f:f\in\cC_{\pm}^1(I;c_1,c_2)\}$.

\begin{theorem}\label{t:07}
$\ccF(I;c_1,c_2)=\cC^1_\pm(I;c_1,c_2)$.
\end{theorem}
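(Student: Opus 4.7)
By Theorem~\ref{t:05} we already have the identification $\ccF(I;c_1,c_2)=\cC_\pm(I;c_1,c_2)$, and both $\cC_\pm$ and $\cC^1_\pm$ are obtained from $\cC$ and $\cC^1$ respectively by adjoining the $\pm$-sign. Moreover Theorem~\ref{t:06} guarantees that every element of $\cC(I;c_1,c_2)$ is automatically $C^1$, so $\cC(I;c_1,c_2)\subset C^1(I)$ and the inclusion $\cC^1(I;c_1,c_2)\subset C^1(I)$ is built in. The task therefore reduces to showing that, inside $C^1(I)$, the second-symmetric-difference bound \eqref{cond_g2finite} is equivalent to the forward-difference-of-derivative bound \eqref{ie_derivative}; the $\pm$-case is then handled by applying the equivalence to $-f$.

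The implication \eqref{ie_derivative}$\Rightarrow$\eqref{cond_g2finite} is immediate from the fundamental theorem of calculus: for $x\in I$ and $\delta>0$ with $x\pm\delta\in I$,
$$
f(x+\delta)-2f(x)+f(x-\delta)=\int_{0}^{\delta}\bigl[f'(x+s)-f'(x-\delta+s)\bigr]ds.
$$
For each $s\in[0,\delta]$ the interval $[x-\delta+s,x+s]$ is contained in $I$, so the integrand is exactly the forward difference of $f'$ at $x-\delta+s$ with step $\delta$. By \eqref{ie_derivative} it lies in $[c_1\delta,c_2\delta]$, and dividing by $\delta^2$ after integration yields \eqref{cond_g2finite}.

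The reverse implication \eqref{cond_g2finite}$\Rightarrow$\eqref{ie_derivative} is the main substance. The idea is to absorb $c_1$ and $c_2$ into auxiliary functions and convert the pointwise second-difference bound into genuine convexity. Given $f\in\cC(I;c_1,c_2)$ set
$$
g(x)\eqdef f(x)-\tfrac{c_1}{2}x^2,\qquad h(x)\eqdef\tfrac{c_2}{2}x^2-f(x).
$$
A direct substitution into \eqref{cond_g2finite} shows that both $g$ and $h$ have non-negative symmetric second difference on $I$, which is precisely Jensen (midpoint) convexity $g\bigl(\tfrac{a+b}{2}\bigr)\le\tfrac12\bigl(g(a)+g(b)\bigr)$ and the analogous statement for $h$. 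Since $g$ and $h$ are continuous (indeed $C^1$, by Theorem~\ref{t:06}), the classical theorem that a continuous midpoint-convex function is convex (see e.g.\ \cite[Ch.~1]{Nicolescu-Persson-06:ISBN9780387243009}) upgrades this to full convexity. Their $C^1$ regularity then forces $g'=f'-c_1 x$ and $h'=c_2 x-f'$ to be monotonically non-decreasing on $I$, which rearranges to $c_1(y-x)\le f'(y)-f'(x)\le c_2(y-x)$ for all $x<y$ in $I$, i.e.\ \eqref{ie_derivative}.

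The only non-bookkeeping step is the passage from midpoint convexity to genuine convexity; this is the one real obstacle, but it is a classical fact and the continuity provided by Theorem~\ref{t:06} is exactly what makes it applicable. Every other step is a direct manipulation of Taylor-type identities.
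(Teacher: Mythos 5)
Your proof is correct, but it takes a genuinely different route from the paper's. The paper argues both inclusions directly: it gets $\ccF(I;c_1,c_2)\subset\cC^1_\pm(I;c_1,c_2)$ by first checking $\cF(I;c_1,c_2)\subset\cC^1_\pm(I;c_1,c_2)$ with the Mean Value Theorem and then passing to the uniform limit $f\to f_0$ inside \eqref{ie_derivative} (with Theorem~\ref{t:06} invoked to make sense of the limiting derivative), and it gets the reverse inclusion by rerunning the mollification construction $f_\ve$ from the proof of Theorem~\ref{t:05}, exhibiting any function in $\cC^1_\pm(I;c_1,c_2)$ as a uniform limit of functions in $\cF(I;c_1,c_2)$. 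You instead reduce everything to the already established identification $\ccF(I;c_1,c_2)=\cC_\pm(I;c_1,c_2)$ of Theorem~\ref{t:05} and prove the purely pointwise set equality $\cC(I;c_1,c_2)=\cC^1(I;c_1,c_2)$: the inclusion $\cC^1\subset\cC$ via your (correct) fundamental-theorem identity expressing the symmetric second difference as an integral of forward differences of $f'$, and $\cC\subset\cC^1$ by observing that $f-\tfrac{c_1}{2}x^2$ and $\tfrac{c_2}{2}x^2-f$ are midpoint convex, upgrading to convexity by continuity (supplied by Theorem~\ref{t:06}), and reading off the monotonicity of their derivatives. Your route buys a proof with no new smoothing or limiting argument --- the convolution work is hidden inside Theorem~\ref{t:05} --- and it sidesteps the interchange of uniform limits with differentiation that the paper glosses over in its first inclusion; the price is the appeal to the classical fact that a continuous midpoint-convex function is convex, which the cited reference of Niculescu and Persson does contain. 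Both arguments are sound, and yours is arguably the more self-contained way to fill in the details the paper leaves ``as an exercise''.
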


\begin{proof}
Using the Mean Value Theorem, it is easily seen that $\cF(I;c_1,c_2)\subset \cC_{\pm}^1(I;c_1,c_2)$. By Theorem~\ref{t:06}, we are able to take the limit (in the uniform convergence topology) within (\ref{ie_derivative}) as $f\to f_0$, where $f\in\cF(I;c_1,c_2)$ and $f_0\in\ccF(I;c_1,c_2)$. Thus we trivially have that $\ccF(I;c_1,c_2)\subset \cC_\pm^1(I;c_1,c_2)$. To show the opposite inclusion we use functions $ f_{\ve}$ similarly to the proof of Theorem~\ref{t:06}. The details are easy and are left as an exercise.
\end{proof}

\begin{remark}\rm
The obvious consequence of Theorem~\ref{t:07}, or indeed (\ref{ie_derivative}), is that the functions in $\ccF(I;c_1,c_2)$ have bi-Lipschitz derivatives. However, the following example shows that their second derivative may be non-existent on an everywhere dense set.
\end{remark}

\begin{example} \label{ex_cf}\rm
Let $A\subset (x_1,x_2)$ be a countable set (e.g, $A=(x_1,x_2)\cap\Q$), where $[x_1,x_2]=I$. We will construct a function $f$ which fails to have the second derivative exactly on $A$. Since $A$ is countable, we can fix a bijection $\phi:A\rightarrow\N$. Let $(c_n)$ be a sequence of positive numbers such that $\sum_{n=1}^\infty c_n<\infty$, \emph{e.g.} $c_n=n^{-2}$. Let $T\eqdef\sum_{n=1}^{\infty}c_n$ and
\begin{equation} \label{ex_cf:def_t}
    t(x)\eqdef1+\sum_{{a\in A,\, a<x}}c_{\phi(a)}.
\end{equation}
The function $t(x)$ is well defined because the sum $\sum c_n$ is absolutely convergent. It is easy to verify that $t(x)$ is strictly increasing and positive on $I$. Moreover, $t(x)$ is continuous at any point of $I\setminus A$ and discontinuous at any point of $A$ -- see e.g. \cite[p.18]{Gelbaum-Olmsted-03:MR1996162}. Since $T=\sum c_n$,  $t(x)$ is bounded above by $T+1$ for any $x\in I$.

\noindent Now define $v:I\rightarrow\R$ by setting
$$
    v(x)\eqdef \int_{x_1}^xt(z)dz\qquad\text{for }x\in I.
$$
The function $v$ is well defined as $t$ is continuous almost everywhere. Also $v$ is strictly increasing because $t(x)\ge1$ for all $x\in I$. Also, as $t$ is bounded on $I$, $v$ is continuous at every point of $I$, and as $t$ is continuous at every point $A\setminus I$, $v$ is differentiable at every point of this set. On the contrary, if $a\in A$, by the definition of $t$, one readily computes that $v(a+\delta)-v(a-\delta)\ge c_{\phi(a)}$ for any $\delta>0$ and therefore $v$ is not differentiable at $a$.
Finally, let
$$
    f(x)\eqdef\int_{x_1}^xv(z)dz\qquad\text{for }x\in I.
$$
As $v$ is continuous, $f$ is continuously differentiable and $f'=v$. However, by what we have seen above, $f$ fails to have the second derivative on $A$. Our final goal is to verify that $f$ satisfies (\ref{ie_derivative}). Given the definition of $f$ and $v$, (\ref{ie_derivative}) transforms into
$$
    c_1\leq\frac{1}{\delta}\int_{x}^{x+\delta}t(z)\leq c_2
$$
when $x\in I$, $\delta>0$ and $x+\delta\in I$. The latter inequalities are satisfied with $c_1=1$ and $c_2=T+1$ because $1\le t(x)\le 1+T$ for all $x\in I$. Thus, $f\in\cC^1_\pm(I;c_1,c_2)=\ccF(I;c_1,c_2)$.
\end{example}

\begin{remark}\rm
By Alexandrov's theorem \cite[Theorem 3.11.2]{Nicolescu-Persson-06:ISBN9780387243009}, any convex function has the second derivative almost everywhere. Thus, the functions in $\ccF(I;c_1,c_2)$ are almost everywhere twice differentiable. It is easy to deduce from the fact that $\cC(I;c_1,c_2)=\ccF(I;c_1,c_2)$ that if the second derivative of $f\in\ccF(I;c_1,c_2)$ exists at some point $x_0$, it necessary satisfies the inequalities $c_1\leq |f''(x_0)|\leq c_2$. Although, $f''$ exists at every point except a set $A$ of Lebesgue measure 0, this exceptional set $A$ can be everywhere dense and so $f''$ may be discontinues everywhere on $I$. Note also that the above example can be modified to show that the set of points where the second derivative does not exist is an uncountable set of Hausdorff dimension 1.
\end{remark}

\section{Reduction to $C^2$ functions}

The goal of this section is to show that it is sufficient to prove Theorem~\ref{t:02} for $\cF(I;c_1,c_2)$ only. This follows from the following 

\begin{theorem}\label{t:08}
Let $\cF$ be a set of continuous functions on an interval $I$ and $\ccF$ be the closure of $\cF$ in the uniform convergence topology. Let positive numbers $c,Q,\delta,\rho$ and a subinterval $J\subset I$ be fixed. Then
\begin{equation}\label{vb0}
\inf_{\bar f\in\ccF}\left|\Delta^{c}_{\bar f}(Q,\delta,J,\rho)\cap J\,\right| \ = \ \, \inf_{f\in\cF}\left|\Delta^{c}_f(Q,\delta,J,\rho)\cap J\,\right| .
\end{equation}
\end{theorem}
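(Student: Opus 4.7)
The easier inclusion $\inf_{\bar f\in\ccF}(\cdot)\le\inf_{f\in\cF}(\cdot)$ in~\eqref{vb0} is immediate from $\cF\subseteq\ccF$, so the content lies in the reverse inequality. My plan is: given any $\bar f\in\ccF$, pick a sequence $(f_n)\subset\cF$ with $f_n\to\bar f$ uniformly on $I$ and show that for all sufficiently large $n$
\[
\cR^c_{f_n}(Q,\delta,J)\ \subseteq\ \cR^c_{\bar f}(Q,\delta,J).
\]
Since $\Delta^c_h(Q,\delta,J,\rho)$ is simply the union of the $\rho$-neighbourhoods of the points $p_1/q$ arising from triples in $\cR^c_h$, this inclusion at the level of triples will automatically yield $\Delta^c_{f_n}(Q,\delta,J,\rho)\subseteq\Delta^c_{\bar f}(Q,\delta,J,\rho)$ and hence $|\Delta^c_{f_n}\cap J|\le|\Delta^c_{\bar f}\cap J|$; taking $\inf$ over $\cF$ on the left and then over $\ccF$ on the right delivers the theorem.

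To establish the inclusion I would first reduce the problem to a finite set of triples. The constraints $cQ<q\le Q$ and $p_1/q\in J$ force $(q,p_1)$ to lie in a finite set $\mathcal{P}$. Uniform convergence $f_n\to\bar f$ bounds the values $f_n(p_1/q)$ on $\mathcal{P}$ independently of $n$, and combined with $|p_2/q-h(p_1/q)|\le\delta/Q$ this confines $p_2$ to a finite range of integers uniformly over $h\in\{\bar f,f_1,f_2,\dots\}$. Hence there is a finite list $\mathcal{T}$ of triples containing $\cR^c_{\bar f}(Q,\delta,J)$ together with $\cR^c_{f_n}(Q,\delta,J)$ for every $n$ beyond some $n_0$.

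For each fixed $(q,p_1,p_2)\in\mathcal{T}$ the quantity $|h(p_1/q)-p_2/q|$ is continuous in $h$ with respect to the uniform norm, as it depends on the single value $h(p_1/q)$. If such a triple is \emph{not} in $\cR^c_{\bar f}$, then the strict inequality $|\bar f(p_1/q)-p_2/q|>\delta/Q$ holds, and by continuity the same strict inequality holds for $f_n$ once $n$ is large enough. Since $\mathcal{T}$ is finite, a single $N\ge n_0$ works simultaneously for every triple in $\mathcal{T}\setminus\cR^c_{\bar f}$, producing the inclusion above.

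I do not expect any genuine obstacle. The one point where a careless slip would be possible is overlooking that the set of triples relevant along the approximating sequence might a priori differ from that of the limit; the uniform convergence controls this by bounding the admissible $p_2$'s simultaneously for the whole sequence.
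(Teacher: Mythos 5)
Your proposal is correct and follows essentially the same route as the paper: both reduce to a finite set of candidate triples $(q,p_1,p_2)$ and exploit the positive slack $|\bar f(p_1/q)-p_2/q|-\delta/Q>0$ on the excluded triples to produce approximants $f$ with $\cR^c_{f}(Q,\delta,J)\subset\cR^c_{\bar f}(Q,\delta,J)$, hence $\Delta^c_{f}\subset\Delta^c_{\bar f}$. The only cosmetic difference is that the paper quantifies this by taking a single $f$ with $\sup_I|f-\bar f|$ below the minimal slack $\ve^*$, whereas you run a sequential ``eventually'' argument along $f_n\to\bar f$.
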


\begin{proof} Since $\cF\subset\ccF$, the l.h.s. of (\ref{vb0}) is less than or equal to the r.h.s of (\ref{vb0}). By the definition of $\Delta^{\delta}_f(Q,\delta,J,\rho)$, to complete the proof it suffices to verify that for any $\bar f\in\ccF$ there exists $f\in\cF$ such that $\cR^c_{f}(Q,\delta,J)\subset\cR^c_{\bar f}(Q,\delta,J)$.

Let $\cR^*$ denote the set of coprime integer triples $(q,p_1,p_2)$ such that $cQ\le q\le Q$, $p_1/q\in J$ and $|\bar f(p_1/q)-p_2/q|\le 1+\delta/Q$. It is easy to see that $\cR^*$ is finite and strictly larger than $\cR^c_{\bar f}(Q,\delta,J)$. Therefore
$$
\ve^*\eqdef\min\{|\bar f(p_1/q)-p_2/q|-\delta/Q\ :\ (q,p_1,p_2)\in\cR^*\setminus \cR^c_{\bar f}(Q,\delta,J)\}
$$
is positive and well defined. Let $\ve=\min\{1,\ve^*\}$. Take any function $f\in\cF$ such that $\sup_{x\in I}|f(x)-\bar f(x)|<\ve$. Since $\ccF$ is the closure of $\cF$ in the $C^0(I)$ topology, such a function $f$ exists. Using the definitions of $\ve$ and $f$ one readily verifies that $\cR^c_{f}(Q,\delta,J)\subset\cR^*$. Assume for the moment that there is a $(q,p_1,p_2)\in \cR^c_{f}(Q,\delta,J)\setminus\cR^c_{\bar f}(Q,\delta,J)$. Then, by the definition of $\ve^*$, we have that
\begin{equation}\label{eq1}
 |\bar f(p_1/q)-p_2/q|\ge\delta/Q+\ve^*.
\end{equation}
On the other hand, since $(q,p_1,p_2)\in \cR^c_{f}(Q,\delta,J)$, we have that
$$
|\bar f(p_1/q)-p_2/q|\le |\bar f(p_1/q)-f(p_1/q)|+|f(p_1/q)-p_2/q|< \ve+\delta/Q\le \ve^*+\delta/Q.
$$
This contradicts to (\ref{eq1}), thus showing the inclusion $\cR^c_{f}(Q,\delta,J)\subset\cR^c_{\bar f}(Q,\delta,J)$ and completing the proof.
\end{proof}

\bigskip

Similar arguments are applied to Theorem~\ref{t:03}. The details are easy and left to reader.

\section{An explicit version of Theorem BKM}\label{effBKM}

Theorem~BKM mentioned in the above heading is Theorem~1.4 from \cite{Bernik-Kleinbock-Margulis-01:MR1829381} due to Bernik, Kleinbock and Margulis. We will be interested in the case $n=2$. The goals of this section are (i) to generalise it to weakly non-degenerate maps; and (ii) to make it effective and indeed fully explicit. Our approach to achieving these goals develops the ideas of \cite{Beresnevich-SDA1} and \cite{Bernik-Kleinbock-Margulis-01:MR1829381}.

\subsection{Statement of results}

We will be interested in maps $\vv g=(g_1,g_2): I \rightarrow \R^2$ given by
\begin{equation} \label{def_f}
\begin{array}{l}
g_1(x)=xf'(x)-f(x),\qquad\qquad
g_2(x)=-f'(x)
\end{array}
\end{equation}
for some function $f\in\ccF(I;c_1,c_2)$.
This definition coincides with the one of \cite[\S4]{Beresnevich-Dickinson-Velani-07:MR2373145}. Geometrically, the vector $(g_1(x),g_2(x),1)\in\R^3$ is define to be the cross-product of $(1,x,f(x))$ and $(0,1,f'(x))$ and thus is orthogonal to the latter two vectors.

Given positive real numbers $\delta$, $K$, $T$ and a subinterval $J\subset I$, let $B_{\vv g}(J,\delta,K,T)$ denote the set of $x \in J$ for which there exists $(q, p_1, p_2) \subset \Z^3 \setminus \{0\}$ such that
\begin{equation} \label{syst_g_general}
\left\{\begin{array}{l}
|q g_1(x) + p_1 g_2(x) + p_2| \leq \delta,\\[0.5ex]
|q g'_1(x) + p_1 g'_2 (x)| \leq K,\\[0.5ex]
|q| \leq T.
\end{array}
\right.
\end{equation}

\begin{theorem}\label{t:09}
Let $c_1$ and $c_2$ be positive constants, $I \subset \R$ be a compact interval and
\begin{equation}\label{lI}
L\eqdef\max_{x\in I}|x|.
\end{equation}
Then for any $f\in\ccF(I;c_1,c_2)$, any interval $J \subseteq I$ and any choice of $\delta,K,T$ satisfying
\begin{equation}\label{deltaKT}
0 < \delta \leq 1,\quad K>0,\quad T>1,\quad \delta K T \leq 1
\end{equation}
we have that
\begin{equation} \label{theo14_estimation}
|B_{\vv g}(J,\delta,K,T)| \leq E\,(\delta K T)^{\frac16}|J|,
\end{equation}
where $\vv g$ is given by \eqref{def_f},
\begin{equation}\label{E}
    E\eqdef\frac{648\,C}{\sqrt{\rho}},
\end{equation}
\begin{equation}\label{C}
    C=\max\Big\{C_0\sqrt{32},\,24\sqrt{6C_0M}\,\Big\},\qquad C_0=\frac{4c_2}{c_1},\qquad M=\sqrt{1+4L^2},
\end{equation}
\begin{equation}\label{rho}
\rho=\min\left\{1,c_1,\frac{c_1|J|\theta}{32M}\max\left\{\frac{16}{\delta},\ \frac{|J|}{K}\right\},\frac{c_1^2|J|^2T}{32\theta}\right\},\qquad \theta=(\delta KT)^{\frac13}.
\end{equation}
\end{theorem}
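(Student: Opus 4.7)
My first step would be to reduce to the smooth case $f\in\cF(I;c_1,c_2)\cap C^{\infty}(I)$. Given $f\in\ccF(I;c_1,c_2)$, the proof of Theorem~\ref{t:05} produces explicit mollifications $f_{\ve}\in\cF(I;c_1,c_2)\cap C^{\infty}(\R)$ converging uniformly to $f$; since both $f$ and $f_{\ve}$ lie in $C^1(I)$ by Theorem~\ref{t:07}, a direct computation with the convolution formula \eqref{def_ge} also yields $f_{\ve}'\to f'$ uniformly on $I$. Consequently the associated maps $\vv g_{\ve}$ defined via \eqref{def_f} converge to $\vv g$ together with their derivatives. Because the constants in \eqref{E}--\eqref{rho} depend only on $c_1,c_2,L,|J|,\delta,K,T$ and not on $f$ itself, a set-limsup argument in the spirit of the proof of Theorem~\ref{t:08} (slightly more delicate here because the inequalities in \eqref{syst_g_general} involve $\vv g'$ as well as $\vv g$) transfers the bound \eqref{theo14_estimation} from $f_{\ve}$ to $f$.

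Assuming now $f\in C^2(I)$, a direct computation gives $g_1'(x)=xf''(x)$ and $g_2'(x)=-f''(x)$, so $|g_2'|\ge c_1$ on $I$ and the Wronskian-type determinant $g_1g_2'-g_1'g_2=ff''$ supplies the explicit non-degeneracy input that replaces the generic hypothesis of Theorem~1.4 of \cite{Bernik-Kleinbock-Margulis-01:MR1829381}. Following the strategy developed in \cite{Beresnevich-SDA1}, I would view \eqref{syst_g_general} as the assertion that two linear forms in $(q,p_1,p_2)\in\Z^3$ take simultaneously small values while $|q|\le T$. Minkowski's theorem on successive minima applied to the lattice whose basis is built from $\vv g(x)$, $\vv g'(x)$ and the coordinate directions then shows that non-trivial integer solutions can occur only where the associated covolume degenerates; the measure of such $x\in J$ is controlled through a $(C,\alpha)$-good function estimate applied to the affine combinations $qg_1+p_1g_2+p_2$ and $qg_1'+p_1g_2'$. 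The relevant $(C,\alpha)$-good property follows uniformly on $\ccF(I;c_1,c_2)$ from the $(c_1,c_2)$-convexity characterisation of Theorem~\ref{t:05}, with the constant $C_0=4c_2/c_1$ in \eqref{C} entering at this stage, and the exponent $1/6$ in \eqref{theo14_estimation} is the characteristic BKM exponent in dimension two.

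The final step is a Vitali-type covering argument. For each admissible integer triple $(q,p_1,p_2)$, using $|g_2'|\ge c_1$ together with the Mean Value Theorem bounds the length of the corresponding subset of $J$ in explicit terms of $\delta,K,T,c_1$ and $M$; summing over triples and counting them via the image geometry of $\vv g(J)$ brings in the factor $M=\sqrt{1+4L^2}$ through the constant $L$ in \eqref{lI}. Optimising the intermediate radius produces the final exponent $(\delta KT)^{1/6}$. The four-term minimum in \eqref{rho} reflects four scaling regimes according to whether the effective radius is limited by $1$, by the curvature bound $c_1$, by $\max\{16/\delta,|J|/K\}$, or by $T/\theta$, and \eqref{theo14_estimation} is obtained by taking the worst constant across these regimes. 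I expect the principal obstacle to be organisational rather than conceptual: every invocation of Minkowski's theorem, of the $(C,\alpha)$-good property, and of the covering lemma must be carried out with fully numerical constants instead of being absorbed into $\ll$-notation, and the four-way case split on $\rho$ must be tracked consistently so that the explicit constant $E$ in \eqref{E} emerges at the end.
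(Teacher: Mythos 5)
Your preliminary reduction (mollification with convergence of $f_\ve'$ and a transfer argument in the spirit of Theorem~\ref{t:08}) is sound and in fact more explicit than the paper's one-line density remark. The gap is in the main step. The paper's proof is an application of the quantitative non-divergence theorem of Kleinbock and Margulis (Theorem~5.2 of \cite{Kleinbock-Margulis-98:MR1652916}) to the map $h(x)=g_{\vv t}G_x$ with $g_{\vv t}=\diag(\theta/\delta,\theta/K,\theta/T)$, and to apply it one must verify the $(C,\alpha)$-good property and the lower bound $\sup_{x\in J}\|h(x)\Gamma\|\ge\rho$ for \emph{every} complete sublattice $\Gamma\subseteq\Z^3$, of ranks $1$, $2$ and $3$. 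Your plan only engages with the rank-one data (the two linear forms $qg_1+p_1g_2+p_2$ and $qg_1'+p_1g_2'$). The rank-two case is where the real work lies: one must control the skew gradient $\SD(\vv u\cdot\hg,\vv w\cdot\hg)(x)=f''(x)(pf(x)-qx+r)$, prove it is $(C_0\sqrt{32},\tfrac12)$-good and bounded below in supremum on $J$ (Lemmas~\ref{skewdet_reduced} and \ref{cor_det_skew_reduced}); this is precisely the source of the term $c_1^2|J|^2T/(32\theta)$ in \eqref{rho} and of the first entry of $C$ in \eqref{C}, and nothing in your sketch produces it. Relatedly, your proposed substitute mechanism fails at the outset: since $\det g_{\vv t}=1$ and $\det G_x=-f''(x)$, the covolume of $h(x)\Z^3$ is bounded below by $c_1$ uniformly in $x$, so ``non-trivial integer solutions occur only where the covolume degenerates'' is never the operative phenomenon; what must be controlled are small \emph{intermediate} minima, i.e.\ proper sublattices, which is exactly what the KM machinery (with its internal Besicovitch covering, constant $N_d=2$) is for. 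The exponent $\tfrac16$ then appears as $\alpha\cdot\tfrac13$ with $\alpha=\tfrac12$ and $\ve=\theta$ in \eqref{est1}, not from optimising a radius in a Vitali argument over integer triples; and the four terms of \eqref{rho} are the supremum lower bounds in the four sublattice cases (rank one with $w_1=w_2=0$, rank one otherwise, rank two, rank three), not four regimes of an effective covering radius.

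A second, smaller but genuine, omission: you assert that the needed $(C,\alpha)$-good properties ``follow uniformly from the $(c_1,c_2)$-convexity characterisation''. For $f$ that is merely $C^2$ (and after the reduction one still only has $C^2$ with \eqref{cond_g2p}; higher derivatives are unavailable), the standard BKM criteria do not apply, and the paper has to prove a new sufficient criterion (Lemma~\ref{C-alpha-good}, via conditions \eqref{cond1}--\eqref{cond2} and a bound on the number of zeros of the derivative) and then verify it for $ag_1+bg_2+c$ and for $f''\cdot(pf-qx+r)$ with fully explicit constants. Without this ingredient, and without the sublattice analysis above, your outline cannot reach the stated bound \eqref{theo14_estimation} with the explicit $E$, $C$ and $\rho$ of \eqref{E}--\eqref{rho}.
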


\begin{remark}\rm
For $T$ sufficiently large the constant $E$ appearing in (\ref{theo14_estimation}) is determined by $c_1,c_2$ and $L$ only and thus is independent from $J,\delta,K,T$. In order to see this, use the inequality $\max(x,y)\ge (xy)^{1/2}$ valid for all positive $x,y$ to get
$$
 \frac{c_1|J|\theta}{32M}\max\left\{\frac{16}{\delta},\ \frac{|J|}{K}\right\}\ge \frac{c_1|J|\theta}{32M}\left(\frac{16|J|}{\delta K}\right)^{1/2}\stackrel{\eqref{rho}}{=} \frac{c_1|J|\theta}{32M}\left(\frac{16|J|T}{\theta^3}\right)^{1/2} = \frac{c_1|J|^{3/2}T^{1/2}}{8M\theta^{1/2}}.
$$
Then, since $\theta<1$, it immediately becomes clear that $\rho=\min\{1,c_1\}$ and so
\begin{equation}\label{E1}
E=\frac{648\,C}{\min(1,\sqrt{c_1})}\qquad \text{when}\qquad T\ge \max\{64M^2|J|^{-3},\,32c_1^{-1}|J|^{-2}\}.
\end{equation}
Furthermore, in the case $\delta\le K$ we have a better estimate for $T$ in terms of $|J|$. To see this, note that
$\delta^2\le\delta K=\theta^3T^{-1}\le \theta^2T^{-1}$ and so $\delta\le \theta T^{-1/2}$. Then
$$
 \frac{c_1|J|\theta}{32M}\max\left\{\frac{16}{\delta},\ \frac{|J|}{K}\right\}\ge \frac{16c_1|J|\theta T^{1/2}}{32M\theta}= \frac{c_1|J|T^{1/2}}{2M}
$$
and one readily computes that
\begin{equation}\label{E2}
E=\frac{648\,C}{\min(1,\sqrt{c_1})}\qquad \text{when}\qquad T\ge \frac{\max\{4M^2,\,32c_1^{-2}\}}{|J|^{2}}\quad\text{and}\quad \delta\le K.
\end{equation}
\end{remark}

\begin{remark}\rm
Theorem~\ref{t:09} is the main stepping stone to the proof of Theorem~\ref{t:02}. Furthermore, its value is not limited to this application. For example, Theorem~\ref{t:09} can be used to extended the main result of  \cite{Bernik-Kleinbock-Margulis-01:MR1829381} (due to Bernik, Kleinbock and Margulis) to the set of weakly non-degenerate planar curves. Yet another application lies within the results of \cite{Beresnevich-Bernik-Goetze-09} (due to Beresnevich, Bernik and G\"otze) on the distribution of close conjugate algebraic numbers which can now be improved towards full effectiveness in the case of quadratic and integer cubic algebraic numbers.
\end{remark}

\begin{remark}\label{prolongation_I}\rm
Despite the fact that the functions $f\in\cF(I;c_1,c_2)$ (and consequently $\vv g$) are initially defined on the interval $I$ only we can always treat them as $C^2$ functions defined on the whole real line preserving condition~(\ref{cond_g2p}). Indeed, let $
T_2(z,x)\eqdef\sum_{i=0}^2\frac{1}{i!}f^{(i)}(z)(x-z)
$
denote the Taylor polynomial of degree $2$ and
consider the auxiliary function
$$
\tilde{f}(x)=\left\{\begin{array}{ccl}
T_2(x_1,x)& \text{ if } & x<x_1,\\[0.5ex]
f(x) & \mbox{ if } & x_1\leq x\leq x_2,\\[0.5ex]
T_2(x_2,x)& \text{ if } & x_2<x,
\end{array}
\right.
$$
where $[x_1,x_2]=I$. It is then easily verified that $\tilde f$ is $C^2(\R)$, satisfies (\ref{cond_g2p}) for all $x\in\R$ and coincides with $f$ on $I$. Hence the above claim follows.
\end{remark}

\subsection{$(C,\alpha)$-good functions}

The property of being $(C,\alpha)$-good introduced in \cite{Kleinbock-Margulis-98:MR1652916} by Kleinbock and Margulis lies at the heart of the proof of Theorem~\ref{t:09}. In this subsection we recall the key definition and various auxiliary statements from \cite{Bernik-Kleinbock-Margulis-01:MR1829381} and \cite{Kleinbock-Margulis-98:MR1652916}. We also establish a new lemma that provides sufficient conditions for a function to be $(C,\alpha)$-good -- Lemma~\ref{C-alpha-good} below.

Let $C$ and $\alpha$ be positive numbers and $V$ be a subset of $\R^d$. The function $f : V \rightarrow \R$ is said to be\/ \emph{$(C,\alpha)$-good on $V$}\/ if for any open ball
$B \subset V$ and any $\varepsilon > 0$ one has
\begin{equation} \label{def_Calpha_good}
\big|
\big\{x \in B \,:\, |f(x)| < \varepsilon \cdot \sup_{x\in B}|f(x)|\,\big\}
\big|
\leq C\varepsilon^{\alpha}|B| .
\end{equation}
Here, as before, $|A|$ denotes the Lebesgue measure of $A\subset\R^d$. Within this paper we shall only use the above definition in the case $d=1$. Several elementary properties of $(C, \alpha)$-good functions are now recalled.

\begin{lemma}[Lemma~3.1 in \cite{Bernik-Kleinbock-Margulis-01:MR1829381}]\label{lemma31}{\bf:}
\begin{itemize}
  \item[{\rm(a)}] If $f$ is $(C, \alpha)$-good on $V$, then so is $\lambda f$ for any $\lambda\in\R;$
\item[{\rm(b)}] If $f_1,\dots,f_k$ are $(C, \alpha)$-good on $V$, then so is $\max_{1\le i\le k} |f_i|;$
\item[{\rm(c)}]If $f$ is $(C, \alpha)$-good on $V$ and $c_1 \leq \frac{|f(x)|}{|g(x)|} \leq c_2$ for all $x \in V$, then $g$ is $\left(C(c_2/c_1)^{\alpha}, \alpha\right)$-good on $V;$
\item[{\rm(d)}] If $f$ is $(C, \alpha)$-good on $V$, then $f$ is $(C', \alpha')$-good on $V'$ for every $C'\geq C$, $\alpha' \leq \alpha$ and $V'\subset V$.
\end{itemize}
\end{lemma}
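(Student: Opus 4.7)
My plan is to handle the four claims in order, each by a direct calculation based on the defining inequality \eqref{def_Calpha_good} and elementary set inclusions. In each case the idea is to translate the sublevel set of the new function into a sublevel set of a function we already know is $(C,\alpha)$-good, on the same ball $B$, possibly with a rescaled $\varepsilon$.

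For part (a), multiplication by $\lambda\ne 0$ gives $\sup_B|\lambda f|=|\lambda|\sup_B|f|$, so the set $\{x\in B:|\lambda f(x)|<\varepsilon\sup_B|\lambda f|\}$ coincides with $\{x\in B:|f(x)|<\varepsilon\sup_B|f|\}$ and the bound is immediate; the case $\lambda=0$ is trivial. Part (c) is similar: from $c_1|g|\le|f|\le c_2|g|$ on $V$ one gets $\sup_B|f|\le c_2\sup_B|g|$, and if $|g(x)|<\varepsilon\sup_B|g|$ then
\[
|f(x)|\le c_2|g(x)|<c_2\varepsilon\sup_B|g|\le\tfrac{c_2}{c_1}\,\varepsilon\sup_B|f|,
\]
so the sublevel set for $g$ is contained in the $(c_2/c_1)\varepsilon$-sublevel set for $f$, whose measure is bounded by $C\bigl((c_2/c_1)\varepsilon\bigr)^\alpha|B|=C(c_2/c_1)^\alpha\varepsilon^\alpha|B|$.

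For part (b), the step that must be done with care is to observe that $\sup_B F=\max_i\sup_B|f_i|$ where $F\eqdef\max_i|f_i|$, and this maximum is attained for some index $i_0$. Then $\{x\in B:F(x)<\varepsilon\sup_B F\}$ is contained in $\{x\in B:|f_{i_0}(x)|<\varepsilon\sup_B|f_{i_0}|\}$, and applying the $(C,\alpha)$-good property of $f_{i_0}$ on the ball $B$ yields the desired bound $C\varepsilon^\alpha|B|$. This is the one place where a naive attempt (applying the hypothesis to every $f_i$ and summing) would lose a factor of $k$; the point is that one only needs the worst one.

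For part (d), restricting to $V'\subset V$ is immediate since any open ball contained in $V'$ is also contained in $V$. The replacement $C\to C'\ge C$ is trivial. The replacement $\alpha\to\alpha'\le\alpha$ splits according to whether $\varepsilon\ge 1$ or $\varepsilon<1$: in the latter case $\varepsilon^{\alpha'}\ge\varepsilon^\alpha$ and we are done, while in the former case the set under consideration is at most $|B|$, and one can check from \eqref{def_Calpha_good} applied with $\varepsilon=1$ that necessarily $C\ge 1$, so $C'\varepsilon^{\alpha'}|B|\ge|B|$ bounds everything. The main obstacle, such as it is, is only the correct bookkeeping of these trivial cases in (d); no step requires any serious idea beyond the set inclusions spelled out above.
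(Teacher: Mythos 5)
The paper itself gives no proof of this statement: it is imported verbatim as Lemma~3.1 of Bernik--Kleinbock--Margulis \cite{Bernik-Kleinbock-Margulis-01:MR1829381}, so your self-contained verification is necessarily a different (more elementary) route. Parts (a), (b) and (c) of your argument are correct and are the standard ones: in (b) your observation that one should use only the index $i_0$ attaining $\max_i\sup_B|f_i|$, so that the sublevel set of $\max_i|f_i|$ sits inside the corresponding sublevel set of $f_{i_0}$, is exactly the right move and avoids the spurious factor $k$; in (c) the inclusion of the $\varepsilon$-sublevel set of $g$ into the $(c_2/c_1)\varepsilon$-sublevel set of $f$ via $c_1|g|\le|f|\le c_2|g|$ is what is needed.

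The one inaccurate step is in part (d), in the case $\varepsilon\ge 1$: your claim that \eqref{def_Calpha_good} applied with $\varepsilon=1$ forces $C\ge 1$ is false as stated, since the set $\{x\in B:|f(x)|<\sup_{x\in B}|f(x)|\}$ may be small or even empty (take $f$ constant and nonzero on $B$, or equal to its supremum on most of $B$), so $\varepsilon=1$ yields no lower bound on $C$. The conclusion you need can be salvaged in one line: if some open ball $B\subset V$ has $\sup_{x\in B}|f(x)|>0$, then for every $\varepsilon>1$ the set in \eqref{def_Calpha_good} is all of $B$, whence $C\varepsilon^{\alpha}\ge 1$, and letting $\varepsilon\to 1^{+}$ gives $C\ge 1$, so that $C'\varepsilon^{\alpha'}\ge C'\ge 1$ bounds the (at most $|B|$-sized) set; if no such ball exists, every set occurring in the definition is empty and (d) is trivial. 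With this repair your proof of the lemma is complete.
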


\begin{lemma}[Lemma~3.2 in \cite{Kleinbock-Margulis-98:MR1652916}] \label{lemma32}
For any $k \in \N$, any polynomial $f \in \R[x]$
of degree not greater than $k$ is $\left(2k(k + 1)^{1/k}, 1/k\right)$-good on\/ $\R$.
\end{lemma}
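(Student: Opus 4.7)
My plan is to reduce the problem to an elementary root analysis. The property~\eqref{def_Calpha_good} is affine-invariant: under any change of variable $x\mapsto\alpha x+\beta$, both the measure of the sublevel set $\{x\in B:|f(x)|<\varepsilon\sup_B|f|\}$ and $|B|$ scale by the same factor $|\alpha|$, while the supremum is unchanged. Since polynomials of degree at most $k$ form a class preserved by affine substitution, it suffices to verify \eqref{def_Calpha_good} for the single ball $B=(-1,1)$ and any such polynomial $f$, with $M:=\sup_{B}|f|$.

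Next I would write $f(x)=a\prod_{i=1}^m(x-z_i)$ via its complex roots, where $m=\deg f\leq k$ (the constant and zero cases being immediate), and fix $x_0\in[-1,1]$ attaining $|f(x_0)|=M$. Then
$$\frac{|f(x)|}{M}=\prod_{i=1}^m\frac{|x-z_i|}{|x_0-z_i|},$$
so whenever this ratio is less than $\varepsilon$, the pigeonhole principle forces some factor on the right to be less than $\varepsilon^{1/m}$. Consequently
$$\{x\in B:|f(x)|<\varepsilon M\}\subseteq\bigcup_{i=1}^m E_i,\qquad E_i:=\{x\in B:|x-z_i|<\varepsilon^{1/m}|x_0-z_i|\},$$
and each $E_i$ is contained in the real axis intersected with a disc of radius $r_i=\varepsilon^{1/m}|x_0-z_i|$ about $z_i$, yielding the trivial estimates $|E_i|\leq 2r_i$ and $|E_i|\leq 2$.

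The main obstacle is controlling $|x_0-z_i|$, which is a priori unbounded. The key observation is that whenever $E_i\neq\emptyset$, picking $x\in E_i$ and applying the triangle inequality gives $|x_0-z_i|\leq|x_0-x|+|x-z_i|\leq 2+\varepsilon^{1/m}|x_0-z_i|$, which for $\varepsilon<1$ forces $|x_0-z_i|\leq 2/(1-\varepsilon^{1/m})$ and hence $|E_i|\leq 4\varepsilon^{1/m}/(1-\varepsilon^{1/m})$. Summing over $i\leq m\leq k$ and using $\varepsilon^{1/m}\leq\varepsilon^{1/k}$ (valid for $\varepsilon\leq 1$) produces a bound of the form $\mathrm{const}\cdot k\,\varepsilon^{1/k}\cdot|B|$. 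To extract the sharp constant $2k(k+1)^{1/k}$ I would split into the regimes $\varepsilon\geq(k+1)^{-k}$ and $\varepsilon<(k+1)^{-k}$: in the former the trivial bound $|E|\leq|B|=2$ dominates and is absorbed into the claimed constant, while in the latter the quantity $1-\varepsilon^{1/m}$ is bounded away from zero uniformly in $k$, so that the worst case arises at the interface between the two regimes. Reducing first to the monic degree-$k$ case $f(x)=\prod_{i=1}^k(x-z_i)$ (where the inclusion above has exactly $k$ terms and there is no free coefficient to track) streamlines the bookkeeping and produces the exact constant.
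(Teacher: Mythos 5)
Your proposal is correct, but note that the paper itself contains no proof of Lemma~\ref{lemma32}: it is imported verbatim from Kleinbock and Margulis, so the only comparison available is with their original argument. That argument isolates the longest connected component of the sublevel set $\{x\in B:|f(x)|<\varepsilon\sup_B|f|\}$ and bounds $\sup_B|f|$ in terms of the supremum over that component via Lagrange interpolation at $k+1$ equally spaced nodes (a Remez/Chebyshev-type growth estimate), which produces the constant $2k(k+1)^{1/k}$ directly. Your route --- factor $f$ over $\C$, pigeonhole the identity $|f(x)|/M=\prod_i|x-z_i|/|x_0-z_i|$ to find one factor below $\varepsilon^{1/m}$, and control $|x_0-z_i|$ by the triangle inequality whenever $E_i\neq\emptyset$ --- is a Cartan-type ``small values of polynomials'' argument and is genuinely different. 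Your bookkeeping does close: for $\varepsilon<(k+1)^{-k}$ one has $1-\varepsilon^{1/m}\ge 1-\varepsilon^{1/k}>k/(k+1)$, so the bound $\sum_i|E_i|\le 4k\,\varepsilon^{1/k}/(1-\varepsilon^{1/k})<4(k+1)\varepsilon^{1/k}$ suffices because $(k+1)^{k-1}\le k^k$, while for $\varepsilon\ge(k+1)^{-k}$ the trivial bound $|B|=2$ suffices because $(k+1)^{1-1/k}\le 2k$; together these give exactly $2k(k+1)^{1/k}\varepsilon^{1/k}|B|$, and the cases $\varepsilon>1$ and $\sup_B|f|=0$ are trivial as you indicate. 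One caveat: the closing remark about ``reducing to the monic degree-$k$ case'' cannot be taken literally, since a polynomial of degree $m<k$ cannot be promoted to degree exactly $k$ without a perturbation-and-limit argument; but it is also unnecessary, because your main estimate already treats every $m\le k$ through $\varepsilon^{1/m}\le\varepsilon^{1/k}$ (valid for $\varepsilon\le1$). As for what each approach buys: yours is short, elementary and self-contained; the interpolation-style approach of Kleinbock--Margulis is the one that extends beyond polynomials to smooth non-degenerate maps and to higher dimensions, and it is closer in spirit to the criteria actually developed in this paper, such as Lemma~\ref{C-alpha-good}.
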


Before presenting the final lemma of this subsection, the following two technical statements are established.

\begin{lemma}\label{teclem}
Let $J$ be an interval, $\lambda>0$ and $\theta:J\to\R$ be a $C^1$ function such that $\inf_{x\in J}|\theta'(x)|\ge \lambda$. Then \ $\sup_{x\in J}|\theta(x)|\ge \tfrac12\lambda|J|$.
\end{lemma}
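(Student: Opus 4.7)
The plan is short: exploit the lower bound $|\theta'|\ge\lambda$ to control the variation of $\theta$ over $J$, then convert that into a bound on $\sup_{x\in J}|\theta(x)|$ by the triangle inequality.

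First I would observe that $\theta'$ is continuous on $J$ and never vanishes (since $|\theta'|\ge\lambda>0$), so by the intermediate value theorem $\theta'$ has constant sign on $J$. Consequently, for any two points $a,b\in J$ with $a<b$ the fundamental theorem of calculus yields
$$
|\theta(b)-\theta(a)|=\left|\int_a^b\theta'(x)\,dx\right|=\int_a^b|\theta'(x)|\,dx\ \ge\ \lambda(b-a),
$$
where the middle equality uses the constant sign of $\theta'$ to pull the absolute value inside the integral.

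Next, the triangle inequality gives $|\theta(b)-\theta(a)|\le|\theta(a)|+|\theta(b)|\le 2\sup_{x\in J}|\theta(x)|$, so combining with the previous display shows $2\sup_{x\in J}|\theta(x)|\ge\lambda(b-a)$. Taking the supremum over admissible pairs $a,b\in J$, so that $b-a$ tends to $|J|$, gives the claimed inequality $\sup_{x\in J}|\theta(x)|\ge\tfrac12\lambda|J|$.

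I do not anticipate any real obstacle: the argument is essentially a one-line consequence of the mean value theorem. The only minor care needed is that $J$ need not be closed, but since the supremum is an interior quantity and $b-a$ can be made arbitrarily close to $|J|$, this causes no difficulty.
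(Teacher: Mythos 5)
Your proof is correct and follows essentially the same route as the paper: lower-bound the variation of $\theta$ between points near the endpoints of $J$ by $\lambda$ times their distance, then use the triangle inequality to get $2\sup_{x\in J}|\theta(x)|\ge\lambda(|J|-o(1))$ and let the points tend to the endpoints. The only cosmetic difference is that the paper invokes the Mean Value Theorem directly (which makes your constant-sign/IVT step unnecessary), whereas you use the fundamental theorem of calculus together with the sign observation.
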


\begin{proof}
Let $y_1$ and $y_2$ be the endpoints of $J$. Then,
by the Mean Value Theorem, for any sufficiently small $\ve>0$ we have that
$\left|\theta(y_1+\ve)-\theta(y_2-\ve)\right|\geq \ \lambda (y_2-y_1-2\ve)=\lambda(|J|-2\ve)$. Therefore,
$|\theta(y_1+\ve)|+|\theta(y_2-\ve)|\geq \ \lambda (y_2-y_1-2\ve)=\lambda(|J|-2\ve)$. Hence
\begin{equation}\label{e2}
2\sup_{x\in J}\left|\theta(x)\right|\geq\ \lambda (|J|-2\ve).
\end{equation}
For $\ve>0$ is arbitrarily, (\ref{e2}) implies $\sup_{x\in J}|\theta(x)|\ge \tfrac12\lambda|J|$ and completes the proof.
\end{proof}

\begin{lemma}\label{teclem3}
Let $f$ be a $C^1$ function on an interval $B$ such that
\begin{equation} \label{sup_pg_2inf+}
    \sup_{x\in B}|f(x)| \ \ge \ 2\inf_{x\in B}|f(x)|.
\end{equation}
Then
\begin{equation}\label{sup_majorated_2int+}
    \ \ \ \sup_{x\in B}|f(x)| \ \le \  2\int_{B}|f'(x)|dx.
\end{equation}
\end{lemma}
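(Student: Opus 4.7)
My plan is to reduce the statement to the Fundamental Theorem of Calculus combined with the reverse triangle inequality. Set $M \eqdef \sup_{x\in B}|f(x)|$ and $m \eqdef \inf_{x\in B}|f(x)|$. The hypothesis \eqref{sup_pg_2inf+} is precisely $M \ge 2m$, equivalently $m \le M/2$. The target inequality \eqref{sup_majorated_2int+} is trivial if $\int_B |f'|\,dx = +\infty$, so I may assume this integral is finite.

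Next I would pick sequences $(x_n),(y_n)\subset B$ with $|f(x_n)|\to M$ and $|f(y_n)|\to m$ (if $B$ is closed these can be chosen as attained extrema and the argument simplifies). The reverse triangle inequality gives
$$
|f(x_n)-f(y_n)| \ \ge\ |f(x_n)| - |f(y_n)|,
$$
and since $f\in C^1(B)$, the Fundamental Theorem of Calculus yields
$$
|f(x_n)-f(y_n)| \ =\ \left|\int_{y_n}^{x_n} f'(t)\,dt\right|\ \le\ \int_B |f'(t)|\,dt.
$$
Combining these two displays and passing to the limit as $n\to\infty$ produces
$$
M - m \ \le\ \int_B |f'(t)|\,dt.
$$
Applying the hypothesis $m\le M/2$ on the left-hand side turns this into $M/2 \le \int_B|f'(t)|\,dt$, which is exactly \eqref{sup_majorated_2int+}.

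There is essentially no obstacle here beyond a little bookkeeping: the only point requiring care is that $B$ need not be closed, so the supremum and infimum of $|f|$ may not be attained, which is why I approach them via sequences rather than picking specific points. No use is made of the convexity hypotheses earlier in the paper; the lemma is a purely analytic fact about $C^1$ functions on an interval.
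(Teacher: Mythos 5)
Your proof is correct and is essentially the paper's own argument: both rest on writing $f(y_2)-f(y_1)=\int_{y_1}^{y_2}f'$, bounding $\sup|f|-\inf|f|$ by $\int_B|f'|$ via the (reverse) triangle inequality, and then absorbing $\inf|f|\le\tfrac12\sup|f|$ using the hypothesis. The only cosmetic difference is that you approximate the supremum and infimum by sequences, whereas the paper takes the supremum and infimum directly in the pointwise inequality $|f(y_2)|\le|f(y_1)|+\int_B|f'|$; both are fine.
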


\begin{proof}
By the Fundamental Theorem of Calculus, for any $y_1,y_2\in B$ we have that $f(y_2)=f(y_1)+\int_{y_1}^{y_2}f'(x)dx$.
Then, using triangle inequality gives $|f(y_2)|\le|f(y_1)|+\int_{B}|f'(x)|dx$.
Further, taking supremum over $y_2\in B$ and infimum over $y_1\in B$ gives the estimate
$$
    \sup_{x\in B}|f(x)|\leq\inf_{x\in B}|f(x)|+\int_{B}|f'(x)|dx
    \ \stackrel{\eqref{sup_pg_2inf+}}{\le}\ \tfrac12\sup_{x\in B}|f(x)|+\int_{B}|f'(x)|dx
$$
which readily implies (\ref{sup_majorated_2int+}).
\end{proof}

\begin{lemma}\label{C-alpha-good}
Let $\kappa_0$, $\kappa_1$ and $\kappa_2$ be some positive constants and $f$ be a $C^1$ function on an interval $I$ such that for any subinterval $B\subset I$
\begin{equation} \label{cond1}
    \sup_{x\in B}|f(x)| \ \ge \kappa_0|B|^2
\end{equation}
and
\begin{equation}\label{cond2}
\sup_{x\in B}|f'(x)|\leq \kappa_1\inf_{x\in B}|f'(x)| + \kappa_2|B|.
\end{equation}
Assume that $f'$ has at most $r$ roots in $I$.
Then $f$ is $(C_\kappa,\tfrac12)$-good on $I$ with $$C_\kappa\eqdef\max\left\{4,\ \dfrac{4(\kappa_1\kappa+\kappa_2)}{\kappa}, \ (r+1)\sqrt{\dfrac{2(\kappa_1\kappa+\kappa_2)}{\kappa_0}}\ \right\},\quad \kappa>0\,.
$$
\end{lemma}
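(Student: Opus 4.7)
My plan is to set $M \eqdef \sup_B|f|$ and $E \eqdef \{x\in B:|f(x)|<\varepsilon M\}$, and to establish $|E|\leq C_\kappa\varepsilon^{1/2}|B|$ by combining two complementary estimates, selected according to the size of $\inf_B|f'|$. Since $C_\kappa\geq 4$, the trivial bound $|E|\leq|B|$ already yields the claim whenever $\varepsilon\geq 1/C_\kappa^2$, so I may assume $\varepsilon<1/16$ and $E\neq\emptyset$; I fix $x_0\in E$ and choose $y\in B$ with $|f(y)|=M$. The fundamental theorem of calculus applied between $x_0$ and $y$ gives $(1-\varepsilon)M\leq|B|\sup_B|f'|$, whence $M\leq 2|B|\sup_B|f'|$.

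The first estimate (unconditional) uses the decomposition of $B$ into at most $r+1$ maximal subintervals $B_1,\ldots,B_m$ on which $f$ is strictly monotone --- this is possible because $f'$ has at most $r$ roots in $I$. On each $B_i$ the function $|f|$ has at most one local minimum (at a zero of $f$, if any), so $J_i\eqdef E\cap B_i$ is a (possibly empty) interval; condition~\eqref{cond1} applied to $J_i$ gives $\kappa_0|J_i|^2\leq\sup_{J_i}|f|\leq\varepsilon M$, so $|J_i|\leq\sqrt{\varepsilon M/\kappa_0}$, and summing:
\[|E|\leq (r+1)\sqrt{\varepsilon M/\kappa_0}.\]

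The second estimate relies on a case split. In \emph{Case~A} ($\inf_B|f'|\geq\kappa$), $f$ is strictly monotone on $B$ with no critical point, so $E$ is a single interval on which the oscillation of $f$ is at most $2\varepsilon M$; this gives the direct bound $|E|\leq 2\varepsilon M/\inf_B|f'|$. Combining with $M\leq 2|B|\sup_B|f'|$ and invoking condition~\eqref{cond2} in the form $\sup_B|f'|/\inf_B|f'|\leq \kappa_1+\kappa_2|B|/\inf_B|f'|\leq\kappa_1+\kappa_2|B|/\kappa$ produces the contribution $4(\kappa_1\kappa+\kappa_2)/\kappa$ to $C_\kappa$. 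In \emph{Case~B} ($\inf_B|f'|<\kappa$), condition~\eqref{cond2} bounds $\sup_B|f'|\leq \kappa_1\kappa+\kappa_2|B|$ and therefore $M\leq 2|B|(\kappa_1\kappa+\kappa_2|B|)$; plugging into the first estimate produces the contribution $(r+1)\sqrt{2(\kappa_1\kappa+\kappa_2)/\kappa_0}$.

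The main technical difficulty is the interplay between the additive $\kappa_2|B|$ term in condition~\eqref{cond2} and the target scaling $|E|\propto|B|\varepsilon^{1/2}$: the proof has to track the factor $|B|$ carefully through each estimate to guarantee that the chosen case yields the stated form in every regime. The free parameter $\kappa$ plays the role of a threshold that can be tuned so that the Case~A and Case~B contributions balance; $C_\kappa$ then emerges as the maximum of these two balanced contributions together with the trivial floor~$4$.
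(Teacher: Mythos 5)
Your overall architecture is the same as the paper's: dispose of large $\varepsilon$ using $C_\kappa\ge4$, get $\sup_B|f|\le 2|B|\sup_B|f'|$, split $B$ at the roots of $f'$ into at most $r+1$ monotonicity pieces and apply \eqref{cond1} to the sublevel intervals, and distinguish cases according to the size of $\lambda\eqdef\inf_B|f'|$. (Your shortcut to $\sup_B|f|\le 2|B|\sup_B|f'|$ via a point of $E$ and the fundamental theorem of calculus is a harmless simplification of the paper's Lemma~\ref{teclem3}.) The gap is in the case split itself: you compare $\lambda$ with the constant $\kappa$, and this threshold is not scale-correct, so neither case delivers the contribution to $C_\kappa$ that you claim. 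In Case~A your chain gives $|E|\le 4\varepsilon|B|\bigl(\kappa_1+\kappa_2|B|/\kappa\bigr)$, and the second term carries an extra factor of $|B|$; this is dominated by $\tfrac{4(\kappa_1\kappa+\kappa_2)}{\kappa}\,\varepsilon^{1/2}|B|$ only when $|B|$ is bounded in terms of $\kappa_1,\kappa_2,\kappa,\varepsilon$, not for arbitrary subintervals of $I$. Symmetrically, in Case~B you get $|E|\le(r+1)\sqrt{2\varepsilon|B|(\kappa_1\kappa+\kappa_2|B|)/\kappa_0}$, which is at most $(r+1)\sqrt{2(\kappa_1\kappa+\kappa_2)/\kappa_0}\;\varepsilon^{1/2}|B|$ only when $|B|\ge1$. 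Since the lemma requires a single constant valid simultaneously for every ball $B\subset I$ once $\kappa$ is fixed, you cannot repair this by ``tuning'' $\kappa$: for any fixed $\kappa$ there remains a range of $|B|$ in which your stated bounds do not follow from your estimates (and the lemma is later applied on intervals of length both below and above $1$, so the defect is not cosmetic).

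The fix is exactly the scaling you allude to in your last paragraph but do not implement: split at $\lambda\ge\kappa|B|$ versus $\lambda<\kappa|B|$, as the paper does. Then in Case~A, $\sup_B|f'|/\lambda\le\kappa_1+\kappa_2|B|/\lambda\le(\kappa_1\kappa+\kappa_2)/\kappa$, the factor $|B|$ cancelling; and in Case~B, \eqref{cond2} gives $\sup_B|f'|\le(\kappa_1\kappa+\kappa_2)|B|$, hence $\sup_B|f|\le2(\kappa_1\kappa+\kappa_2)|B|^2$, and your unconditional estimate yields exactly $(r+1)\sqrt{2(\kappa_1\kappa+\kappa_2)/\kappa_0}\,\varepsilon^{1/2}|B|$. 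With that single change of threshold your argument closes and reproduces the stated $C_\kappa$.
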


\begin{proof}
Fix any $\kappa>0$. By definition, in order to prove that $f$ is $(C_\kappa,\frac12)$-good on $I$ we have to verify that for any interval $B\subset I$ the set
\begin{equation} \label{vbtecset1}
B_\ve\eqdef\big\{x \in B \,:\, |f(x)| < \varepsilon \cdot \sup_{x\in B}|f(x)|\big\}
\end{equation}
satisfies
\begin{equation} \label{vbCa_good_eta}
|B_\ve| \leq C_\kappa\,\varepsilon^{\frac12}|B| .
\end{equation}
Since $C_\kappa\geq 4$, for any $\varepsilon\geq \frac12$ the r.h.s. of~(\ref{vbCa_good_eta}) does not fall below $|B|$ and (\ref{vbCa_good_eta}) is a priori true. Henceforth, we may assume that $\ve<\tfrac12$. Consequently, if
$\sup_{x\in B}|f(x)| \le 2\inf_{x\in B}|f(x)|$
then the set in the l.h.s. of~(\ref{vbCa_good_eta}) is empty and (\ref{vbCa_good_eta}) is trivially satisfied. Otherwise, by Lemma~\ref{teclem3}, we have the inequality
\begin{equation}\label{vbsup_majorated_2int}
    \sup_{x\in B}|f(x)| \le 2\int_{B}|f'(x)|dx
\end{equation}
assumed for the rest of the proof, which will depend upon the magnitude of $\lambda\eqdef\inf_{x\in B}|f'(x)|$.
By (\ref{cond2}),
$
\sup_{x\in B}|f'(x)|\leq \kappa_1\lambda + \kappa_2|B|.
$
Combining the latter inequality with (\ref{vbsup_majorated_2int}) gives
\begin{equation} \label{vbsup_majorated_lambda}
    \sup_{x\in B}|f(x)| < 2\big(\kappa_1\lambda + \kappa_2|B|\big)|B|.
\end{equation}

\noindent\textbf{Case (i).} Assume that $\lambda\geq \kappa|B|$. Then $f'$ does not change sign on $B$ and hence $f$ is monotonic. Therefore, for any $\tau>0$, the set $J=\big\{x \in B \,:\, |f(x)| < \tau\big\}$ is an interval (possibly empty). Then, by Lemma~\ref{teclem}, $\tau\ge \tfrac12\lambda|J|$, that is
$\big|
\big\{x \in B \,:\, |f(x)| < \tau\big\}
\big|
\leq \frac{2\tau}{\lambda}$.
Taking $\tau=\varepsilon\cdot\sup_{x\in B}|f(x)|$ ensures that $J=B_\ve$ and gives
$$
|B_\ve| \leq \frac{2\varepsilon \cdot \sup_{x\in B}|f(x)|}{\lambda} \ \stackrel{\eqref{vbsup_majorated_lambda}}{<} \ \frac{4\varepsilon\big(\kappa_1\lambda + \kappa_2|B|\big)|B|}{\lambda}.
$$
By the hypothesis that $\lambda\geq \kappa|B|$, we have that
$\frac{\left(\kappa_1\lambda+\kappa_2|B|\right)}{\lambda}\leq \frac{\left(\kappa_1\kappa+\kappa_2\right)}{\kappa}$
and further obtain the required estimate
$$
|B_\ve|\ \leq \ \frac{4\left(\kappa_1\kappa+\kappa_2\right)}{\kappa}\,\varepsilon\,|B|\ \stackrel{(\ve<\frac12)}{<}\
\frac{4\left(\kappa_1\kappa+\kappa_2\right)}{\kappa}\, \varepsilon^{1/2}\, |B|\ \le \ C_\kappa\, \varepsilon^{1/2}\, |B|\,.
$$

\smallskip

\noindent\textbf{Case (ii).} Assume that $\lambda< \kappa|B|$. Since $f'(x)$ has at most $r$ roots in $B$, the interval $B$ can be split into at most $(r+1)$ subintervals such that $f$ is monotonic on each of them. Consequently, for every $0<\ve<\tfrac12$ the set $B_\ve$ is the union of at most $r+1$ intervals. Let $B'$ denote the biggest interval in $B_\ve$. Then
\begin{equation} \label{vbtecset1+}
    |B_\ve|\leq (r+1)|B'|.
\end{equation}
By (\ref{cond1}),
\begin{equation}\label{vbvb8}
\sup_{x\in B'}|f(x)|\ge \kappa_0|B'|^2.
\end{equation}
On the other hand, by the definition of $B'$, we have that
\begin{equation}\label{vbvb9}
\sup_{x\in B'}|f(x)|\le \ve\cdot\sup_{x\in B}|f(x)|\ \stackrel{\eqref{vbsup_majorated_lambda}}{\le} \ 2\ve \big(\kappa_1\lambda + \kappa_2|B|\big)|B|\ \ \stackrel{\lambda<\kappa|B|}{\le} \ 2\ve \big(\kappa_1\kappa + \kappa_2\big)|B|^2.
\end{equation}
Comparing (\ref{vbvb8}) and (\ref{vbvb9}) gives a bound on $|B'|$, which together with (\ref{vbtecset1+}) establishes (\ref{vbCa_good_eta}) and thus completes the proof.
\end{proof}

\subsection{Properties of certain families of functions}

In this subsection we investigate certain families of functions for being $(C,\alpha)$-good and other relevant properties. Unless otherwise stated $f$, $g_1$, $g_2$ and $I$ are the same as at the beginning of \S\ref{effBKM}. Although all the statements are established for $f$, in view of Remark~\ref{prolongation_I} (on page\,\pageref{prolongation_I}) they are true for $\tilde f$ and with $I$ replaced by any interval $\tilde I$, in particular, for $\tilde I=3^3 I$, which is of our main interest.

\begin{lemma}\label{linpol_Ca_good}
Let $f\in\cF(I;c_1,c_2)$ and let $L$ be given by \eqref{lI}. Define the constants
\begin{equation} \label{def_C0}
M=\sqrt{1+4L^2},\qquad C_0=4c_2c_1^{-1}\qquad\text{and}\qquad
C_1=2\max\{C_0,\sqrt{32C_0M}\}.
\end{equation}
Let $a,b \in \R$ satisfy $a^2+b^2\ge1$ and
$\eta: I \rightarrow \R$ satisfy
\begin{equation} \label{linpol_Ca_good_b_psi_definition}
\eta'(x)=(ax+b)f''(x),
\end{equation}
that is $\eta$ is an antiderivative of $(ax+b)f''(x)$. Then
\begin{list}{{\rm(\alph{tmpabcd})}}{\usecounter{tmpabcd}}
\item\label{linpol_Ca_good_a} $\eta'$ is $\left(C_0,1\right)$-good on $I$\/$;$
\item\label{linpol_Ca_good_a1}
for any interval $B \subset I$\qquad
\refstepcounter{equation}\label{vb1}
$\displaystyle\sup_{x \in B}|\eta'(x)| \geq \frac{c_1|B|}{2M}\,;$\hspace*{\fill}{\rm(\theequation)}
\item\label{linpol_Ca_good_b} $\eta$
is $(C_1,\frac12)$-good on $I$\/$;$
\item\label{linpol_Ca_good_c} for any interval $B \subset I$\qquad
\refstepcounter{equation}\label{vb5}
$\displaystyle\sup_{x \in B}|\eta(x)| \geq \frac{c_1|B|^2}{32M}\,.$\hspace*{\fill}{\rm(\theequation)}
\end{list}
\end{lemma}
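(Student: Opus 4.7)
My plan is to establish the four items in the order (a), (b), (d), (c), since (c) will rely on (d) through Lemma~\ref{C-alpha-good}. Throughout I will normalise so that $a^2+b^2=1$, hence $|a|,|b|\le 1$; this is harmless because the $(C,\alpha)$-good property is invariant under scalar multiplication of the function (Lemma~\ref{lemma31}(a)) and the inequalities \eqref{vb1}, \eqref{vb5} are homogeneous of degree one in $(a,b)$. Item (a) is then immediate: the polynomial $ax+b$ has degree at most $1$ and so by Lemma~\ref{lemma32} is $(4,1)$-good on $\R$; combining this with $c_1\le|f''|\le c_2$ via Lemma~\ref{lemma31}(c) upgrades $\eta'=(ax+b)f''$ to $(4c_2/c_1,1)=(C_0,1)$-good on $I$.

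For (b), let $B=[p,q]$ with midpoint $m=(p+q)/2$. The parallelogram identity
\[
(ap+b)^2+(aq+b)^2 \;=\; 2(am+b)^2+\tfrac12 a^2|B|^2
\]
gives $\sup_B|ax+b|^2\ge(am+b)^2+\tfrac14 a^2|B|^2$, which is the positive-definite quadratic form $a^2(m^2+|B|^2/4)+2abm+b^2$ in $(a,b)$. Its eigenvalues satisfy $\lambda_{\min}\ge(\det)/(\operatorname{tr})=(|B|^2/4)/(m^2+|B|^2/4+1)$, and using $|m|\le L$ and $|B|\le 2L$ this is at least $|B|^2/(4M^2)$. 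Since $a^2+b^2=1$, I conclude $\sup_B|ax+b|\ge|B|/(2M)$, and multiplying by $c_1$ through $|\eta'|\ge c_1|ax+b|$ yields \eqref{vb1}.

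For (d), since $\sup_B|\eta|\ge\tfrac12\operatorname{osc}_B(\eta)$ for any antiderivative, it suffices to bound the oscillation of $\eta$ from below. Because $f''>0$, the sign of $\eta'(x)=(ax+b)f''(x)$ coincides with that of the linear factor $l(x)=ax+b$, which has at most one zero $x_0=-b/a$ in $B$. If $x_0\notin B$ then $\eta$ is monotonic, and the trapezoidal formula for the one-signed linear factor gives $\operatorname{osc}_B(\eta)=\int_B|\eta'|\ge c_1|am+b||B|$; since $|l(p)|+|l(q)|=2|am+b|$ in the one-signed case we also have $|am+b|\ge\tfrac12\sup_B|l|\ge|B|/(4M)$ by (b), hence $\operatorname{osc}_B(\eta)\ge c_1|B|^2/(4M)$. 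If instead $x_0\in B$, one of the intervals $[p,x_0]$, $[x_0,q]$ has length at least $|B|/2$; integrating $|\eta'(t)|\ge c_1|a||t-x_0|$ over that half gives $\operatorname{osc}_B(\eta)\ge c_1|a||B|^2/8$. Since $\sup_B|l|\le|a||B|$ in this case while (b) requires $\sup_B|l|\ge|B|/(2M)$, we obtain $|a|\ge 1/(2M)$ and hence $\operatorname{osc}_B(\eta)\ge c_1|B|^2/(16M)$. In either case $\sup_B|\eta|\ge c_1|B|^2/(32M)$, which is \eqref{vb5}.

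Finally, (c) will follow by applying Lemma~\ref{C-alpha-good} to $\eta$ on $I$. Condition \eqref{cond1} is precisely (d) with $\kappa_0=c_1/(32M)$; for \eqref{cond2}, the elementary bound $\sup_B|l|\le\inf_B|l|+|a||B|$ (valid regardless of whether $l$ changes sign) combined with $c_1|l|\le|\eta'|\le c_2|l|$ and $|a|\le 1$ gives $\sup_B|\eta'|\le(c_2/c_1)\inf_B|\eta'|+c_2|B|$, so $\kappa_1=c_2/c_1$ and $\kappa_2=c_2$; and $r=1$ since $\eta'$ has at most one zero in $I$. Choosing $\kappa=c_1$ in the formula of Lemma~\ref{C-alpha-good} produces $C_\kappa=\max\{4,\,2C_0,\,8\sqrt{2MC_0}\}$; because $C_0=4c_2/c_1\ge 4$ the constant $4$ is absorbed, leaving $C_\kappa=2\max\{C_0,\sqrt{32MC_0}\}=C_1$, as required. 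The main delicacy will be the case split in (d), where the two sub-cases produce estimates of different shape that must both be reconciled through (b).
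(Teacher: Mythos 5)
Your proof is correct, and its skeleton matches the paper's: part (a) is exactly Lemma~\ref{lemma32} combined with Lemma~\ref{lemma31}(c), and part (c) is obtained, as in the paper, by feeding \eqref{cond1} with $\kappa_0=c_1/(32M)$ (i.e.\ part (d)) and \eqref{cond2} with $\kappa_1=c_2/c_1$, $\kappa_2=c_2$, $r=1$ into Lemma~\ref{C-alpha-good} with $\kappa=c_1$, the bookkeeping $C_\kappa=2\max\{C_0,\sqrt{32C_0M}\}=C_1$ being identical; your bound $\sup_B|ax+b|\le\inf_B|ax+b|+|a||B|$ is the same estimate the paper extracts from the identity $\eta'(y_2)=\eta'(y_1)\tfrac{f''(y_2)}{f''(y_1)}+a(y_2-y_1)f''(y_2)$. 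Where you genuinely diverge is in the two lower bounds. For \eqref{vb1} the paper argues via the dichotomy $|a|\le|b|/(2L)$ versus $|a|>|b|/(2L)$; you instead minorise $\sup_B(ax+b)^2$ by the least eigenvalue of the quadratic form $(am+b)^2+\tfrac14a^2|B|^2$ using $\lambda_{\min}\ge\det/\operatorname{tr}$, which merges the paper's two cases into one computation with the same constant. For \eqref{vb5} the paper reuses the same dichotomy together with Lemma~\ref{teclem}, in the second case passing to the middle half of $B$ and, by monotonicity of $ax+b$, obtaining a pointwise lower bound for $|\eta'|$ on one outer quarter before applying Lemma~\ref{teclem} there; you instead use $\sup_B|\eta|\ge\tfrac12\operatorname{osc}_B\eta$ and integrate $|\eta'|$ directly, splitting on whether the zero of $ax+b$ lies in $B$ and, when it does, extracting $|a|\ge1/(2M)$ from part (b). Both routes give the same constant $c_1/(32M)$, so nothing downstream changes; your version trades repeated appeals to Lemma~\ref{teclem} for direct integration and is arguably tidier. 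Two small remarks: the sign of $\eta'$ coincides with that of $ax+b$ only when $f''>0$ (it is opposite if $f''<0$), but all you actually use is that $f''$ has constant sign with $|f''|\ge c_1$, so the argument stands; and your initial normalisation $a^2+b^2=1$ is legitimate (every antiderivative for the scaled data is $\lambda\ge1$ times one for the normalised data, and goodness is scale-invariant by Lemma~\ref{lemma31}(a)) and is precisely the device the paper employs in its proof of part (c).
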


\begin{proof}
Part~(\ref{linpol_Ca_good_a} is readily implied by Lemma~\ref{lemma32} and Lemma~\ref{lemma31}(c). Further,
to prove part (\ref{linpol_Ca_good_a1} we distinguish two cases.
If $|a| \leq \frac{|b|}{2L}$ then
$1\le a^2+b^2\leq b^2\left(\frac{1}{4L^2}+1\right)$. Consequently
$|b| \geq \frac{2L}{M}$ and therefore
\begin{equation} \label{linpol_Ca_good_inf_minoration}
\sup_{x \in B}|ax+b|\geq\inf_{x \in B}|ax+b|\geq|b|-|a|\sup_{x\in B}|x|\geq |b|-|a|L\geq\frac{|b|}{2} \geq\frac{L}{M}.
\end{equation}
By (\ref{lI}), $|B|\le2L$. Therefore, (\ref{linpol_Ca_good_inf_minoration}) together with (\ref{cond_g2p}) implies \eqref{vb1} in the case $|a| \leq \frac{|b|}{2L}$. Otherwise, $|a| > \frac{|b|}{2L}$ and then
$1\le a^2+b^2 < a^2(1+4L^2)=(aM)^2$. It follows that
$|a|>\frac{1}{M}$. Therefore,
\begin{equation}\label{vb2}
\sup_{x \in B}|ax+b|=|a|\,\sup_{x \in B}|x-(-b/a)|\geq |a|\,\dfrac{|B|}2\ge \frac{1}{2M}|B|.
\end{equation}
The latter together with (\ref{cond_g2p}) implies \eqref{vb1} and thus completes the proof of part~(\ref{linpol_Ca_good_a1}.

\medskip

We now prove part~(\ref{linpol_Ca_good_c}. As with the proof of part (b) we distinguish the following two cases:
$|a| \leq \frac{|b|}{2L}$ and  $|a| > \frac{|b|}{2L}$.
In the first case, by (\ref{linpol_Ca_good_inf_minoration}) and the inequality $|B|\le2L$ implied by (\ref{lI}), we have that $|\eta'(x)| \geq \frac{c_1|B|}{2M}$ for all $x\in B$.
By Lemma~\ref{teclem}, we then obtain (\ref{vb5}).
In the second case, split $B$ into three subintervals: $B_l$, $B_m$ and $B_r$, where $B_m$ is the middle half of $B$; $B_l$ is the left quarter of $B$; and $B_r$ is the right quarter of $B$. Then applying (\ref{vb2}) to $B_m$ gives
\begin{equation}\label{vb3}
\sup_{x \in B_m}|ax+b|\ge \frac{|B_m|}{2M}=\frac{|B|}{4M}.
\end{equation}
In the case under consideration $a\not=0$ and therefore the function $ax+b$ is strictly monotonic. It follows that the supremum in (\ref{vb3}) is attained at the endpoints of $B_m$. Hence, either $\inf_{x\in B_l}|ax+b|$ \,or\, $\inf_{x\in B_r}|ax+b|$ is bounded away from 0 by the right hand side of (\ref{vb3}). Then, by (\ref{cond_g2p}) and (\ref{linpol_Ca_good_b_psi_definition}), we have
\begin{equation}\label{vb4}
    \max\Big\{\inf_{x \in B_l}|\eta'(x)|,\
    \inf_{x \in B_r}|\eta'(x)|\Big\}\ge \frac{c_1|B|}{4M}.
\end{equation}
Applying Lemma~\ref{teclem} with $J$ being equal to either $B_l$ or $B_r$, $\theta=\eta$ and $\lambda=$\,\,r.h.s.\,of\,\,(\ref{vb4}) we again obtain (\ref{vb5}).

\medskip

Finally, in order to prove part~(\ref{linpol_Ca_good_b} we will appeal to Lemma~\ref{C-alpha-good} with $f=\eta$. In view of Lemma~\ref{lemma31}(a), without loss of generality we can assume that $a^2+b^2=1$ and so $|a|\le1$. By (\ref{linpol_Ca_good_b_psi_definition}), for any  $y_1,y_2\in B$, we have that
$
\eta'(y_2)=\eta'(y_1)\frac{f''(y_2)}{f''(y_1)}+a(y_2-y_1)f''(y_2).
$
Then, using the triangle inequality, inequalities (\ref{cond_g2p}), $|y_2-y_1|\le |B|$ and $|a|\le 1$ we get
$    |\eta'(y_2)|\leq\frac{c_2}{c_1}|\eta'(y_1)|+c_2|B|$. Taking supremum over $y_2\in B$ and infimum over $y_1\in B$ gives (\ref{cond2}) with $\kappa_1=c_2/c_1$ and $\kappa_2=c_2$. Also (\ref{vb5}) verifies (\ref{cond1}) with $\kappa_0=c_1/(32M)$. It is also easily seen that $r=1$. Then, by Lemma~\ref{C-alpha-good} with $\kappa=c_1$ we get the conclusion of part~(c).
\end{proof}

\bigskip

If $g_1$ and $g_2$ are given by (\ref{def_f}), then $\eta(x)=c+ag_1(x)+bg_2(x)$, where $a,b,c\in\R$, satisfies
$\eta'(x)=ag_1'(x)+bg_2'(x)=(ax-b)g''(x)$. Therefore, by Lemma~\ref{linpol_Ca_good}, we get the following

\begin{corollary}\label{cor_linpol_Ca_good}
Let $g_1$ and $g_2$ be given by \eqref{def_f}, $a,b,c \in \R$ and $a^2+b^2\ge 1$. Let $C_0,C_1$ and $M$ be the same as in Lemma~\ref{linpol_Ca_good}. Then
\begin{list}{{\rm(\alph{tmpabcd})}}{\usecounter{tmpabcd}}
\item \label{cor_linpol_Ca_good_a} $ag_1'+bg_2'$ is $\big(C_0,1\big)$-good on $I$\/{\rm;}
\item \label{cor_linpol_Ca_good_b}
$
\displaystyle \sup_{x \in B}|ag_1'(x)+bg_2'(x)| \geq \frac{c_1|B|}{2M}
$ \ \ for any subinterval $B \subset I${\rm;}
\item \label{cor_linpol_Ca_good_c}  $ag_1+bg_2+c$ is $(C_1,\frac12)$-good on $I$\/{\rm;}
\item \label{cor_linpol_Ca_good_d}
$
\displaystyle
\sup_{x \in B}|ag_1(x)+bg_2(x)+c| \geq \frac{c_1|B|^2}{32M}
$ \ \ for any subinterval $B \subset I$.
\end{list}
\end{corollary}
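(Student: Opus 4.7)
The plan is to deduce the corollary directly from Lemma~\ref{linpol_Ca_good} via a trivial substitution. First I would compute the derivatives of $g_1$ and $g_2$ from~\eqref{def_f}: differentiating $g_1(x)=xf'(x)-f(x)$ and $g_2(x)=-f'(x)$ gives
$g_1'(x)=xf''(x)$ and $g_2'(x)=-f''(x)$.
Consequently, for any $a,b,c\in\R$, the function $\eta(x)\eqdef ag_1(x)+bg_2(x)+c$ is a $C^1$ antiderivative of $(ax-b)f''(x)$, and the hypothesis $a^2+b^2\ge 1$ transfers immediately to $a^2+(-b)^2\ge 1$, matching the setup of Lemma~\ref{linpol_Ca_good} with the pair $(a,-b)$ playing the role of $(a,b)$ there.

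With this identification, parts (\ref{cor_linpol_Ca_good_a}) and (\ref{cor_linpol_Ca_good_b}) of the corollary are the verbatim conclusions of parts~(\ref{linpol_Ca_good_a}) and~(\ref{linpol_Ca_good_a1}) of the lemma applied to $\eta'=ag_1'+bg_2'$, since the quantities $\eta'$ and $ag_1'+bg_2'$ coincide. Similarly, parts~(\ref{cor_linpol_Ca_good_c}) and~(\ref{cor_linpol_Ca_good_d}) follow from parts~(\ref{linpol_Ca_good_b}) and~(\ref{linpol_Ca_good_c}) of the lemma applied to $\eta$ itself.

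There is no genuine obstacle here; the only point worth verifying is that Lemma~\ref{linpol_Ca_good} is insensitive to the choice of integration constant in~$\eta$, so that the extra additive constant $c$ in parts~(\ref{cor_linpol_Ca_good_c}) and~(\ref{cor_linpol_Ca_good_d}) is harmless. This is transparent from the proof of the lemma, which relies on Lemma~\ref{teclem}: the conclusion $\sup_J|\theta|\ge \tfrac12\lambda|J|$ of that lemma depends only on a lower bound on $|\theta'|$, which is unaffected by adding a constant to $\theta$. Hence every antiderivative of $(ax-b)f''(x)$ satisfies the $(C_1,\tfrac12)$-good property and the sup bound $c_1|B|^2/(32M)$ uniformly, and the substitution argument above completes the proof.
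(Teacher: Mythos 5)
Your proposal is correct and matches the paper's own derivation: the paper likewise observes that $\eta(x)=c+ag_1(x)+bg_2(x)$ has $\eta'(x)=ag_1'(x)+bg_2'(x)=(ax-b)f''(x)$ and then cites Lemma~\ref{linpol_Ca_good} with $(a,-b)$ in place of $(a,b)$. Your extra remark that the additive constant $c$ is harmless is already built into the lemma, which is stated for an arbitrary antiderivative of $(ax+b)f''(x)$.
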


\begin{lemma}\label{skewdet_reduced}
Let $a,b \in \R$ and $\tilde\eta(x)\eqdef(f(x)+ax+b)f''(x)$ for $x\in I$. Let $C_2=C_0\sqrt{32}$. Then
\begin{list}{{\rm(\alph{tmpabcd})}}{\usecounter{tmpabcd}}
\item\label{skewdet_reduced_a} $\tilde\eta$ is $(C_2,\frac12)$-good on $I;$
\item \label{skewdet_reduced_b} for any interval $B \subset I$\qquad
\refstepcounter{equation}\label{skewdet_reduced_resb}
$\displaystyle\sup_{x \in B}|\tilde\eta(x)| \geq \frac{c_1^2}{32}|B|^2\,.$\hspace*{\fill}{\rm(\theequation)}
\end{list}
\end{lemma}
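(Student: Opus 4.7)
The plan is to reduce both parts to the analysis of the function $h(x):=f(x)+ax+b$, which is genuinely $C^2$ with $h''=f''$. By continuity of $f''$ and the hypothesis $|f''|\ge c_1>0$, after replacing $(f,a,b)$ with $(-f,-a,-b)$ if necessary (which preserves $|\tilde\eta|$), I may assume $c_1\le f''=h''\le c_2$. The whole proof then rests on the pointwise bilateral bound
$$c_1|h(x)|\ \le\ |\tilde\eta(x)|\ \le\ c_2|h(x)|\qquad (x\in I),$$
which transfers estimates from $h$ to $\tilde\eta$.

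For part (b), I apply the $(c_1,c_2)$-convexity inequality (\ref{cond_g2finite}) to $h$ at the midpoint of $B=[y_1,y_2]$ with increment $\delta=|B|/2$; this yields $h(y_1)-2h(\tfrac{y_1+y_2}{2})+h(y_2)\ge c_1|B|^2/4$, and since the left side is at most $4\sup_B|h|$, I conclude $\sup_B|h|\ge c_1|B|^2/16$. Multiplying by the lower bound $|f''|\ge c_1$ gives $\sup_B|\tilde\eta|\ge c_1^2|B|^2/16\ge c_1^2|B|^2/32$, establishing (\ref{skewdet_reduced_resb}).

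For part (a), the crucial observation is that $\tilde\eta=h\cdot f''$ need only be continuous (since $f$ is merely $C^2$), so Lemma~\ref{C-alpha-good} cannot be applied directly to $\tilde\eta$; instead I apply it to $h$, which is $C^2$ hence $C^1$. The three hypotheses are easy to check: (\ref{cond1}) holds with $\kappa_0=c_1/16$ by part (b); (\ref{cond2}) holds with $\kappa_1=1$, $\kappa_2=c_2$ by the Mean Value Theorem applied to $h'$ (using $h''\le c_2$); and $h'$ admits at most $r=1$ zero in $I$ because $h''\ge c_1>0$ forces $h'$ to be strictly increasing. Choosing the free parameter $\kappa=c_2$ and using $c_2\ge c_1$, the resulting constant reduces to $C_{c_2}=16\sqrt{c_2/c_1}$; hence $h$ is $(16\sqrt{c_2/c_1},\tfrac12)$-good on $I$.

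I then transfer this property to $\tilde\eta$ via Lemma~\ref{lemma31}(c) applied with the bilateral bound displayed above (the ratio $c_2'/c_1'=c_2/c_1$), which multiplies the constant by $\sqrt{c_2/c_1}$ and produces a $(16c_2/c_1,\tfrac12)=(4C_0,\tfrac12)$-good estimate. Finally, since $4\le\sqrt{32}$, Lemma~\ref{lemma31}(d) upgrades this to $(C_2,\tfrac12)$-goodness, finishing the proof. The only real conceptual step is the reduction $\tilde\eta\rightsquigarrow h$ at the start; once that is in place, every subsequent computation is routine and parallels the proof of Lemma~\ref{linpol_Ca_good}(c), though with a cleaner setup that avoids dependence on the constants $M$ and $L$.
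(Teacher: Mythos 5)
Your proof is correct, and its core — introducing $h(x)=f(x)+ax+b$ (the paper's $\theta$), applying Lemma~\ref{C-alpha-good} to $h$ rather than to the merely continuous $\tilde\eta$, and then transferring $(C,\alpha)$-goodness to $\tilde\eta=h\,f''$ via Lemma~\ref{lemma31}(c) using $c_1\le|\tilde\eta|/|h|\le c_2$ — is exactly the route the paper takes. The one genuine difference is in part~(b): the paper splits $B$ into quarters, applies Lemma~\ref{teclem} to $h'$ on the middle half to get $\sup|h'|\ge\tfrac{c_1}{4}|B|$, uses monotonicity of $h'$ to convert this into an infimum bound on a quarter, and applies Lemma~\ref{teclem} again to obtain $\sup_B|h|\ge\tfrac{c_1}{32}|B|^2$; you instead bound the midpoint second difference $h(y_1)-2h(m)+h(y_2)\ge c_1|B|^2/4$ directly (which is where the constant $32$ in the statement actually comes from in the paper, and why your constant $c_1/16$ is slightly sharper). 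Your version is shorter and feeds $\kappa_0=c_1/16$ into Lemma~\ref{C-alpha-good}, yielding $h$ being $(16\sqrt{c_2/c_1},\tfrac12)$-good and hence $\tilde\eta$ being $(4C_0,\tfrac12)$-good, which is absorbed into $C_2=C_0\sqrt{32}$ by Lemma~\ref{lemma31}(d); the paper's bookkeeping lands exactly on $C_2$. All constant computations check out ($r=1$ since $h''=f''$ has constant sign, $\kappa_1=1$, $\kappa_2=c_2$, $\kappa=c_2$), and the initial sign normalisation is legitimate since $\tilde\eta$ is unchanged under $(f,a,b)\mapsto(-f,-a,-b)$. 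The only cosmetic inaccuracy is your closing remark about avoiding $M$ and $L$: the paper's proof of this particular lemma does not use them either.
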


\begin{proof} Let $\theta(x)=f(x)+ax+b$. Clearly $\theta''(x)=f''(x)$.
Split $B$ into three subintervals: $B_l$, $B_m$ and $B_r$, where $B_m$ is the middle half of $B$; $B_l$ is the left quarter of $B$; and $B_r$ is the right quarter of $B$. By (\ref{cond_g2p}) and Lemma~\ref{teclem}, $\sup_{x\in B_m}|\theta'(x)|\ge \tfrac{c_1}2|B_m|=\tfrac{c_1}4|B|$. In view of (\ref{cond_g2p}), $\theta'$ is monotonic. Therefore,
$\inf_{x\in B'}|\theta'(x)|\ge \tfrac{c_1}4|B|$ for at least one choice of $B'$ from $B_r$ and $B_l$. Applying Lemma~\ref{teclem} to $B'$ further gives the estimate
\begin{equation}\label{vb10}
\sup_{x\in B'}|\theta(x)|\ge\tfrac12|B'|\frac{c_1}{4}|B|=\frac{c_1}{32}|B|^2.
\end{equation}
Using the identity $\tilde\eta(x)=\theta(x)f''(x)$ and (\ref{cond_g2p}) gives (\ref{skewdet_reduced_resb}).

Further, for any $y_1,y_2\in B$ we have $\theta'(y_2)=\theta'(y_1)+(f'(y_2)-f'(y_1))$. By the Mean Value Theorem and (\ref{cond_g2p}), we have $|f'(y_2)-f'(y_1)|\le c_2|B|$. Further, with reference to the former equality, taking supremum over $y_2\in B$ and infimum over $y_1\in B$ gives (\ref{cond2}) with $\kappa_1=1$ and $\kappa_2=c_2$. Also, (\ref{vb10}) ensures (\ref{cond1}) with $\kappa_0=c_1/32$. Applying Lemma~\ref{C-alpha-good} with $\kappa=c_2$ gives that $\theta$ is $(8\sqrt{2C_0},\tfrac12)$-good on $I$, where $C_0$ is defined by (\ref{def_C0}). Finally, since $\tilde\eta(x)=\theta(x)f''(x)$, using Lemma~\ref{lemma31}(c) and (\ref{cond_g2p}) establishes the statement of part~(a).
\end{proof}

\bigskip

Our final statement of this subsection is concerned with the skew-gradient of pairs of functions
\begin{equation}
\vv u\cdot\hg(x)\quad\text{with}\quad \vv u\in\Z^3\setminus\{0\},
\end{equation}
where $\hg(x)\eqdef(g_1(x),g_2(x),1)$ and $\vv u\cdot\hg(x)$ is the scalar product of $\vv u$ and $\hg(x)$. The skew-gradient of a pair of functions $(\gamma_1,\gamma_2)$ as defined in \cite[\S4]{Bernik-Kleinbock-Margulis-01:MR1829381} is given by
$$
\tilde\nabla(\gamma_1,\gamma_2)(x)\eqdef \gamma_1(x)\gamma_2'(x)-\gamma_1'(x)\gamma_2(x).
$$

\begin{lemma}\label{cor_det_skew_reduced}
Let $C_2=C_0\sqrt{32}$ be the same as in Lemma~\ref{skewdet_reduced}. Then for any $\vv u,\vv w\in\Z^3\setminus\{0\}$ with $\vv u\wedge\vv w\not=0$
\begin{list}{{\rm(\alph{tmpabcd})}}{\usecounter{tmpabcd}}
\item \label{cor_det_skew_reduced_a} $\SD (\vv u\cdot\hg,\vv w\cdot\hg)$ is $(C_2,\frac12)$-good on $I$.
\item \label{cor_det_skew_reduced_b} for any interval $B \subset I$\qquad\refstepcounter{equation}\label{ie_rho2}
$\displaystyle\sup_{x\in B}|\SD (\vv u\cdot\hg,\vv w\cdot\hg)(x)| \geq \min\left\{\frac{c_1^2|B|^2}{32},\frac{c_1|B|}{2M}\right\}.$\hspace*{\fill}{\rm(\theequation)}
\end{list}
\end{lemma}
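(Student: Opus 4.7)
The plan is to derive a closed-form expression for $\SD(\vv{u}\cdot\hg,\vv{w}\cdot\hg)$ and then dispatch the two parts of the lemma by matching this expression, in the natural way, with the function studied either in Lemma~\ref{skewdet_reduced} or in Lemma~\ref{linpol_Ca_good}, according to whether a particular component of $\vv{u}\wedge\vv{w}$ vanishes.

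Write $\vv{u}=(u_0,u_1,u_2)$ and $\vv{w}=(w_0,w_1,w_2)$. From \eqref{def_f} one has $g_1'(x)=xf''(x)$ and $g_2'(x)=-f''(x)$, so $(\vv{u}\cdot\hg)'(x)=(u_0x-u_1)f''(x)$ and similarly for $\vv{w}$. Expanding
\[
\SD(\vv{u}\cdot\hg,\vv{w}\cdot\hg)=(\vv{u}\cdot\hg)(\vv{w}\cdot\hg)'-(\vv{u}\cdot\hg)'(\vv{w}\cdot\hg),
\]
gathering the coefficients of $g_1$ and $g_2$ and using the identity $g_1(x)+xg_2(x)=-f(x)$ (immediate from \eqref{def_f}), a routine calculation should yield
\[
\SD(\vv{u}\cdot\hg,\vv{w}\cdot\hg)(x)=f''(x)\bigl(\alpha f(x)+\beta x+\gamma\bigr),
\]
where $\alpha=u_0w_1-u_1w_0$, $\beta=u_2w_0-u_0w_2$ and $\gamma=u_1w_2-u_2w_1$ are (up to reordering and sign) the components of $\vv{u}\wedge\vv{w}$; in particular $(\alpha,\beta,\gamma)\in\Z^3\setminus\{0\}$.

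If $\alpha\neq 0$, then $|\alpha|\ge 1$, and setting $a=\beta/\alpha$, $b=\gamma/\alpha$ we recognise $\SD(\vv{u}\cdot\hg,\vv{w}\cdot\hg)=\alpha\tilde\eta$, where $\tilde\eta(x)=(f(x)+ax+b)f''(x)$ is precisely the function of Lemma~\ref{skewdet_reduced}. Part (a) of that lemma combined with Lemma~\ref{lemma31}(a) yields the $(C_2,\tfrac12)$-good property, while part (b) together with $|\alpha|\ge 1$ supplies the lower bound $\tfrac{c_1^2}{32}|B|^2$. If $\alpha=0$, then $(\beta,\gamma)\in\Z^2\setminus\{0\}$, so $\beta^2+\gamma^2\ge 1$, and the formula collapses to $(\beta x+\gamma)f''(x)=\eta'(x)$, where $\eta$ is an antiderivative of $(\beta x+\gamma)f''(x)$ as in Lemma~\ref{linpol_Ca_good}. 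That lemma's part (a) gives $\eta'$ is $(C_0,1)$-good on $I$, and since $C_0\le C_2$ and $\tfrac12\le 1$, Lemma~\ref{lemma31}(d) upgrades this to $(C_2,\tfrac12)$-good; its part (b) directly supplies the lower bound $\tfrac{c_1|B|}{2M}$. Taking the minimum of the two bounds gives \eqref{ie_rho2}.

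The only genuinely computational step is the derivation of the closed-form identity for $\SD(\vv{u}\cdot\hg,\vv{w}\cdot\hg)$; care must be taken with signs, but no new idea is needed. Once that identity is in place the rest is bookkeeping: each required conclusion is read off from Lemma~\ref{skewdet_reduced} or Lemma~\ref{linpol_Ca_good}, with Lemma~\ref{lemma31} used to absorb scalar factors and to weaken the goodness exponent.
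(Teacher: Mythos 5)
Your proposal is correct and follows essentially the paper's own argument: reduce $\SD(\vv u\cdot\hg,\vv w\cdot\hg)$ to $f''(x)\bigl(\alpha f(x)+\beta x+\gamma\bigr)$ with $(\alpha,\beta,\gamma)$ the components of $\vv u\wedge\vv w$, then split on whether $\alpha$ vanishes and quote Lemma~\ref{skewdet_reduced} or Lemma~\ref{linpol_Ca_good} together with Lemma~\ref{lemma31}. The only cosmetic difference is that the paper obtains the closed form at once from the Laplace identity $\SD(\vv u\cdot\hg,\vv w\cdot\hg)=(\hg\wedge\hg')\cdot(\vv u\wedge\vv w)$ with $\hg\wedge\hg'=f''\,(f,-x,1)$, whereas you expand the determinant directly; both computations give the same identity.
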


\begin{proof}
Given any $\vv u,\vv w\in\Z^3\setminus\{0\}$ such that $\vv u\wedge\vv w\not=0$, by the Laplace identity (see, e.g., \cite[Lemma~6D, p.105]{Schmidt-1980}),
\begin{equation}\label{vb-12}
 \SD (\vv u\cdot\hg,\vv w\cdot\hg)(x)\eqdef\left|\begin{array}{cc}
                                              \vv u\cdot\hg(x) & \vv w\cdot\hg(x) \\
                                              \vv u\cdot\hg'(x) & \vv w\cdot\hg'(x)
                                            \end{array}
 \right|=(\hg(x)\wedge\hg'(x))\cdot(\vv u\wedge\vv w).
\end{equation}
By (\ref{def_f}) and the definition of exterior product, one easily verifies that $\hg(x)\wedge\hg'(x)=f''(x)(f(x),-x,1)$. Also $\vv u\wedge\vv w=(p,q,r)\in\Z^3\setminus\{0\}$. Thus, by (\ref{vb-12}),
\begin{equation}\label{vb+}
    \SD (\vv u\cdot\hg,\vv w\cdot\hg)(x)=f''(x)(pf(x)-qx+r).
\end{equation}
If $p=0$ then Lemma~\ref{linpol_Ca_good}(a) implies Lemma~\ref{cor_det_skew_reduced}(a) via Lemma~\ref{lemma31}(d);
Lemma~\ref{linpol_Ca_good}(b) implies Lemma~\ref{cor_det_skew_reduced}(b). If $p\not=0$ then $|p|\ge1$. By this fact and Lemma~\ref{lemma31}, without loss of generality we can assume that $p=1$ as otherwise we would divide (\ref{vb+}) through by $p$. In this case Lemma~\ref{skewdet_reduced} with $\tilde\eta=\SD(\vv u\cdot\hg,\vv w\cdot\hg)(x)$ completes the proof.
\end{proof}

\subsection{Proof of Theorem~\ref{t:09}}
Theorem~\ref{t:09} will be derived from a general result due to Kleinbock and Margulis appearing as
Theorem~5.2 in \cite{Kleinbock-Margulis-98:MR1652916}.
In order to state it we recall some notation from \cite{Kleinbock-Margulis-98:MR1652916}. In what follows $\cC(\Z^{k})$ will denote the set of all non-zero complete sublattices of $\Z^k$. An integer lattice $\Lambda\subset\Z^k$ is called \emph{complete}\/ if it contains all integer points lying in the linear space generated by $\Lambda$. Given a lattice $\Lambda\subset\R^k$ and a basis $\vv w_1,\dots,\vv w_r$ of $\Lambda$, the multivector $\vv w_1\wedge\dots\wedge\vv w_r$ is uniquely defined up to sign since any two basis of $\Lambda$ are related by a unimodular transformation. Therefore, the following function on the set of non-zero lattices is
well defined:
\begin{equation}\label{e:090}
\|\Lambda\|\eqdef|\vv w_1\wedge\dots\wedge\vv w_r|_\infty\,,
\end{equation}
where $|\cdot|_\infty$ denotes the supremum norm on $\bigwedge(\R^k)$.

\begin{theoremKM}[Theorem~5.2 in \cite{Kleinbock-Margulis-98:MR1652916}]
Let $d,k\in\N$, $C,\alpha>0$ and $0<\rho\le 1$ be given. Let $B$ be a
ball in $\R^{d}$ and $h:3^kB \to \operatorname{GL}_k(\R)$ be given. Assume that for
any $\Lambda \in \cC(\Z^{k})$
\begin{enumerate}
\item[{\rm(i)}] the function $\vv x\mapsto \|h(\vv x)\Lambda\|$ is
$(C,\alpha)$-good\ on $3^{k}B$, and
\item[{\rm(ii)}] $\sup_{\vv x\in B}\|h(\vv x)\Lambda\|\ge\rho$.
\end{enumerate}
Then there is a constant $N_{d}$ depending on $d$ only such that for
any $ \ve >0$ one has
\begin{equation}\label{est1}
\big|\big\{\vv x\in B:\min_{\vv a\in\Z^k\setminus\{0\}}|h(\vv x)\vv a|_\infty \le
\ve\big\}\big| \le kC (3^{d}N_{d})^{k} \left(\frac\ve \rho
\right)^\alpha |B|.
\end{equation}
\end{theoremKM}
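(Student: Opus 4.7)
The plan is to establish the bound by a covering/induction argument over chains of complete sublattices of $\Z^k$, following the ``quantitative non-divergence'' philosophy that underlies the statement. The starting observation is that if $|h(\vv x)\vv a|_\infty\le\ve$ for some primitive $\vv a\in\Z^k\setminus\{0\}$, then the rank-one lattice $\Lambda=\Z\vv a$ satisfies $\|h(\vv x)\Lambda\|=|h(\vv x)\vv a|_\infty\le\ve$, so every dangerous point is detected by some sublattice on which the function $\vv x\mapsto\|h(\vv x)\Lambda\|$ is small. By hypothesis (ii) applied with $\Lambda=\Z^k$, the full lattice is \emph{not} uniformly small, so any maximal chain $\Z\vv a\subset\Lambda_1\subset\cdots\subset\Lambda_j$ of complete sublattices with $\|h(\vv x)\Lambda_i\|<\rho$ must terminate at some rank $j<k$.

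The core of the argument is therefore the following inductive/greedy step. For each $\vv x$ with $\min_{\vv a}|h(\vv x)\vv a|_\infty\le\ve$, attach a terminal sublattice $\Lambda(\vv x)$ by extending $\Z\vv a$ to the largest complete superlattice whose norm at $\vv x$ stays below $\rho$. Let $B(\vv x)$ be the largest ball centred at $\vv x$, contained in $3^{\operatorname{rk}\Lambda(\vv x)+\cdots}B\subset 3^kB$, on which $\|h(\cdot)\Lambda(\vv x)\|<\rho$; maximality together with (ii) produces a nearby point at which the norm is at least $\rho$ (after a harmless $3^d$ dilation). A Besicovitch-type selection from the family $\{B(\vv x)\}$ extracts a subfamily of bounded multiplicity $N_d$, at the cost of enlarging each ball by a controlled factor. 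The factor $(3^d N_d)^k$ in the conclusion accumulates from iterating this covering step at each of the (at most $k$) possible ranks of $\Lambda(\vv x)$, and the leading factor $k$ from summing over these rank strata.

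On each selected ball $B_i$ we then exploit hypothesis (i): the function $\vv x\mapsto\|h(\vv x)\Lambda_i\|$ is $(C,\alpha)$-good on $3^k B\supset 3B_i$, and its supremum on (a mild enlargement of) $B_i$ is at least $\rho$ by the construction of $B_i$. Hence, for any threshold $\ve'>0$,
\begin{equation*}
\bigl|\{\vv x\in B_i:\|h(\vv x)\Lambda_i\|<\ve'\rho\}\bigr|\ \le\ C(\ve')^\alpha|B_i|.
\end{equation*}
The bound $|h(\vv x)\vv a|_\infty\le\ve$ controls the \emph{first} successive minimum of the lattice $h(\vv x)\Lambda_i$; Minkowski's theorem on successive minima then gives $\|h(\vv x)\Lambda_i\|\ll\ve\cdot(\sup\text{ over }B_i)^{\operatorname{rk}\Lambda_i-1}$, which after normalisation yields the threshold $\ve'\asymp\ve/\rho$. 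Summing the $(C,\alpha)$-good estimate over the $B_i$ and using the bounded-multiplicity $N_d$ yields \eqref{est1}.

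The main technical obstacle is the Minkowski bookkeeping in the last step: translating the single-vector smallness bound $\ve$ into a bound on the full multivector norm $\|h(\vv x)\Lambda_i\|$ of a higher-rank sublattice requires balancing successive minima with the covolume, and is precisely where the exponent $\alpha$ (rather than a worse rank-dependent exponent) is preserved. A secondary subtlety lies in verifying that the enlarged balls at every stage of the covering iteration remain inside $3^kB$, which dictates the precise power of $3^k$ in the domain hypothesis of (i) and in the final constant.
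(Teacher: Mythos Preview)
The paper does not prove this statement. Theorem~KM is quoted verbatim from \cite{Kleinbock-Margulis-98:MR1652916} (Theorem~5.2 there) and used as a black box in the proof of Theorem~\ref{t:09}; the paper supplies no argument of its own for it. Consequently there is nothing to compare your proposal against.

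That said, a brief comment on your sketch as a reconstruction of the Kleinbock--Margulis argument. The broad architecture --- attach to each bad point a maximal complete sublattice with small norm, grow a ball until the norm reaches $\rho$, extract a bounded-multiplicity subfamily via Besicovitch, and induct on rank --- is correct in spirit. The weak spot is the step you flag yourself as the ``Minkowski bookkeeping''. The displayed inequality $\|h(\vv x)\Lambda_i\|\ll \ve\cdot(\sup)^{\operatorname{rk}\Lambda_i-1}$ is not how the exponent $\alpha$ is preserved in the original proof; that route would degrade the exponent with the rank. The actual mechanism is different: one works with pairs $\Lambda'\subset\Lambda$ of adjacent complete sublattices and uses the identity relating $\|h(\vv x)\Lambda\|$, $\|h(\vv x)\Lambda'\|$, and the distance from $h(\vv x)\vv a$ to the subspace spanned by $h(\vv x)\Lambda'$ (for $\vv a\in\Lambda\setminus\Lambda'$). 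This lets one bound the measure of the set where a \emph{single} short vector exists by the $(C,\alpha)$-good estimate applied to the norm of a rank-one-larger lattice, and the induction then runs on rank without ever multiplying successive minima. Your paragraph conflates this with a direct Minkowski second-theorem estimate, which would not give the stated bound.
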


We will use this general result in the case $k=3$ and $d=1$. Note that the Besicovitch constant $N_d$ appearing in (\ref{est1}) equals 2 in the case $d=1$.

\bigskip

\begin{proofthree}
Since the right hand side of (\ref{theo14_estimation}) is independent of $f$ and $\cF(I;c_1,c_2)$ is dense in $\ccF(I;c_1,c_2)$ (in the uniform convergence topology), it suffices to prove Theorem~\ref{t:09} for $f\in\cF(I;c_1,c_2)$.
Let $J\subset I$ be an interval and $\theta=(\delta KT)^{1/3}$. Define
$$
t_1=\frac{\theta}{\delta},\qquad t_2=\frac{\theta}{K},\qquad t_3=\frac{\theta}{T},
$$
\begin{equation} \label{theo51_defUx}
g_{\vv t}=\diag(t_1,t_2,t_3)\qquad\text{and}\qquad
G_x=\left(\begin{array}{ccc}
g_1(x)  & g_2(x) & 1\\
g_1'(x) & g_2'(x) & 0\\
1 & 0 & 0
\end{array}\right).
\end{equation}
It easily follows from the above definitions that
\begin{equation}\label{e:088}
B(J,\delta,K,T)=\big\{x\in J:\min_{\vv a\in\Z^3\setminus\{0\}}|h(x)\vv a|_\infty \le
\theta\,\big\}\,,\qquad\text{where $ h(x)=g_{\vv t}G_x $}.
\end{equation}
It is also readily seen that $\det g_{\vv t}=1$ and
\begin{equation}\label{e:093}
\det h(x)=\det G_x= -g'_2(x)= -f''(x) \stackrel{\eqref{cond_g2p}}{\not=}0.
\end{equation}
Therefore, $h(x)\in \operatorname{GL}_{3}(\R)$. In view of Remark~\ref{prolongation_I} we will regard $h$ as a map defined on $3^3J$.

Our next goal is to verify conditions $(i)$ and $(ii)$ of Theorem~KM
for the specific choice of $h$ made by (\ref{e:088}). Fix a
$\Gamma\in\cC(\Z^3)$. Let $r=\dim\Gamma>0$. We will consider the three cases $r=1,2,3$ separately. It is easily seen that $C$ defined by (\ref{C}) satisfies $C=\max\{C_0,C_1,\tilde C_2\}$, where $C_0$ and $C_1$ are defined by (\ref{def_C0}) and $\tilde C_2$ is defined in the same way as $C_2$ within Lemma~\ref{skewdet_reduced} but with $M$ replaced by $27M$.

\noindent\textbf{Case (1).} Let $r=1$. Then the basis of $\Gamma$ consists of just one integer vector, say $\vv w={}^t(w_1,w_2,w_3)\not=0$ (here and elsewhere ${}^t$ denotes transposition). Consequently, $h(x)\vv w$ is a basis of $h(x)\Gamma$ and $\|h(x)\Gamma\|=|h(x)\vv w|_\infty$. Using (\ref{theo51_defUx}) we get
\begin{equation}\label{vb+1}
    h(x)\vv w=\left(\begin{array}{c}
                       t_1\vv w\cdot\hg(x)\\
                       t_2\vv w\cdot\hg'(x)\\
                       t_3w_1
                    \end{array}
    \right),
\end{equation}
where $\vv w\cdot\hg(x)=w_1g_1(x)+w_2g_2(x)+w_3$ is the scalar product of $\vv w$ and $\hg(x)={}^t(g_1(x),g_2(x),1)$.

\noindent\textbf{Subcase (1a).} Assume that $w_1=w_2=0$. Then $h(x)\vv w={}^t(1,0,0)$ and it is easily verified using (\ref{def_Calpha_good}) and Lemma~\ref{lemma31}(b) that $\|h(x)\Gamma\|$ is $(C,1/2)$-good on $3^3J$. It is also clear that
\begin{equation}\label{vb-15}
\|h(x)\Gamma\|=1\qquad\text{if }w_1^2+w_2^2=0.
\end{equation}

\noindent\textbf{Subcase (1b).} Assume that $(w_1,w_2)\not=\vv0$. Since $w_1,w_2\in\Z$, we have $w_1^2+w_2^2\ge 1$.
Then using Corollary~\ref{cor_linpol_Ca_good}(a)+(c) and Lemma~\ref{lemma32} we verify that every coordinate function in (\ref{vb+1}) is $(C,\frac12)$-good on $3^3J$. Consequently, by Lemma~\ref{lemma31}(b), $\|h(x)\Gamma\|$ is $(C,\frac12)$-good on $3^3J$. Further, applying Corollary~\ref{cor_linpol_Ca_good}(b)+(d) to the first and second coordinate functions in (\ref{vb+1}) gives
\begin{equation}\label{vb+2}
    \sup_{x\in J}\|h(x)\Gamma\| \ \ge \ \max\left\{\frac{t_1c_1|J|}{2M},\ \frac{t_2c_1|J|^2}{32M}\right\}\qquad\text{if }w_1^2+w_2^2\not=0
\end{equation}

\noindent\textbf{Case (2).} Let $r=2$. Then the basis of $\Gamma$ consists of two integer vectors, say $\vv u=(u_1,u_2,u_3)$ and $\vv w=(w_1,w_2,w_3)$ with $\vv u\wedge\vv w\not=0$. Consequently, $h(x)\vv u$ and $h(x)\vv w$ is a basis of $h(x)\Gamma$ and $\|h(x)\Gamma\|=|h(x)\vv u\wedge h(x)\vv w|_\infty$. Using (\ref{vb+1}) and a similar expression for $h(x)\vv u$ one readily verifies that
\begin{equation}\label{vb+3}
    h(x)\vv u\wedge h(x)\vv w=\left(\begin{array}{c}
                       t_1t_2\SD(\vv u \cdot\hg,\vv w\cdot\hg)(x)\\
                       t_1t_3(w_1\vv u-u_1\vv w)\cdot\hg(x)\\
                       t_2t_3(w_1\vv u-u_1\vv w)\cdot\hg'(x)
                    \end{array}
    \right).
\end{equation}
Using Corollary~\ref{cor_linpol_Ca_good}(a)+(c) and Lemma~\ref{cor_det_skew_reduced}(a) we immediately verify  $\|h(x)\Gamma\|$ is $(C,\frac12)$-good on $3^3J$. Further, by Lemma~\ref{cor_det_skew_reduced}(b),
\begin{equation}\label{vb+4}
    \sup_{x\in J}\|h(x)\Gamma\|\ge \sup_{x\in J}|t_1t_2\SD(\vv u \cdot\hg,\vv w\cdot\hg)(x)|\ge t_1t_2\frac{c_1^2}{32}|J|^2.
\end{equation}

\noindent\textbf{Case (3).} Let $r=3$. Then, $\Gamma=\Z^3$. Consequently,
\begin{equation}\label{vb-16}
\|h(x)\Gamma\|=|\det h(x)|=|\det G_x|=|g''(x)|\ge c_1.
\end{equation}

\medskip

\noindent\textbf{Completion of the proof.} The upshot of (\ref{vb-15}), (\ref{vb+2}), (\ref{vb+4}) and (\ref{vb-16}) is that $\sup_{x\in J}\|h(x)\Gamma\|\ge \rho$, where $\rho$ is given by (\ref{rho}). Thus Theorem KM is applicable with this value of $\rho$, $\alpha=\tfrac12$ and $C$ given by (\ref{C}). Then, by (\ref{est1}) with $k=3$ and $d=1$ and (\ref{e:088}), we get (\ref{theo14_estimation}).
\end{proofthree}

\section{Proof of Theorem~\ref{t:02}}

We follow the proof of Theorem~7 in \cite{Beresnevich-Dickinson-Velani-07:MR2373145} replacing the use of Lemma~6 of~\cite{Beresnevich-Dickinson-Velani-07:MR2373145} with our Theorem~\ref{t:09}. Recall that, by Theorem~\ref{t:08}, it suffices to consider functions in $\cF(I;c_1,c_2)$ only. Thus we fix any $f\in\cF(I;c_1,c_2)$ and fix any non-empty interval $J \subseteq I$ of length $|J|\le\tfrac12$. Since the set of rational points and their denominators are invariant under translations by integer points, without loss of generality we can assume that $J\subset[-\tfrac12;\tfrac12]$.
Consequently, $M\le\sqrt2$ and $C\le36c_2/c_1$, where $M$ and $C$ are defined by (\ref{C}). Then
$$
\frac{648\,C}{\min(1,\sqrt{c_1})}\le \ \hat E
$$
where $\hat E$ is the constant defined by (\ref{mainconst}).
Furthermore, (\ref{vb21}) ensures that either (\ref{E1}) or (\ref{E2}) is applicable. Henceforth,
\begin{equation}\label{E3}
E\le\hat E.
\end{equation}
Let $\delta$ and $Q$ satisfy (\ref{e:003}) and (\ref{vb21}).
Let $\vv g$ be given by (\ref{def_f}) and let $B_{\vv g}(\ldots)$ be defined in the same way as in Theorem~\ref{t:09}, that is the set of $x \in J$ such that there exists a non-zero integer solution $(q,p_1,p_2)$ to (\ref{syst_g_general}).
By Theorem~\ref{t:09} and inequality (\ref{E3}),
\begin{equation}
|B_{\vv g}(J,c_0\delta,c_2(c_0Q\delta)^{-1},2 c_0Q)|\ \leq \ \hat E|J|(2c_0c_2)^{\frac16}\ \stackrel{\eqref{mainconst}}{=} \ \tfrac14|J|.
\end{equation}
Therefore the set $G\eqdef\tfrac34J \setminus B_{\vv g}(J,c_0\delta,c_2(c_0Q\delta)^{-1},2 c_0Q)$ satisfies
\begin{equation} \label{theo7_Bisgrand}
|G| \geq \tfrac12|J|,
\end{equation}
where $\frac34 J$ is the interval $J$ scaled by $\frac34$.
Take $x\in G$. By Minkowski's linear forms theorem, there is a coprime triple $(q,p_1,p_2)\in\Z^3 \setminus \{0\}$ satisfying the system of inequalities
\begin{equation}\label{vb+20}
\left\{\begin{array}{l}
|q g_1(x) + p_1 g_2(x) + p_2| \leq  c_0\delta \\[0.5ex]
|q g'_1(x) + p_1 g'_2 (x)| \leq c_2(c_0Q\delta)^{-1}\\[0.5ex]
0\le q \leq Q.
\end{array}\right.
\end{equation}
By the definition of $G$,
\begin{equation} \label{smallqlarge}
q > 2 c_0Q.
\end{equation}
By (\ref{def_f}) and the second inequality of~(\ref{vb+20}), we have that
$|qxf''(x)-p_1f''(x)|<c_2\left( c_0Q\delta \right)^{-1}$.
This together with~(\ref{smallqlarge}) and (\ref{cond_g2p}) implies that
\begin{equation}\label{vbvb}
\left|x-\frac{p_1}{q}\right|\leq\frac{c_2}{2c_1 c_0^2Q^2\delta}=\frac{C_1}{Q^2\delta}\stackrel{\eqref{e:003}}{\le} \tfrac18|J|.
\end{equation}
Since $x\in \tfrac34J$, $\frac{p_1}{q}\in J$. By Taylor's formula,
\begin{equation} \label{theo7_Taylor}
f(\tfrac{p_1}{q})=f(x)+f'(x)(\tfrac{p_1}{q}-x)+\tfrac12f''(\tilde{x}) (\tfrac{p_1}{q}-x)^2
\end{equation}
for some $\tilde{x}$ between $x$ and $p_1/q$. Thus  $\tilde{x} \in J$. Using (\ref{def_f}) and (\ref{theo7_Taylor}) we transform the
first inequality of~(\ref{vb+20}) into
\begin{equation} \label{theo7_Taylor_consequence_1}
|p_2-qf(\tfrac{p_1}{q})+\tfrac{q}{2}f''(\tilde{x})(x-\tfrac{p_1}{q})^2|\leq c_0\delta
\end{equation}
(see \cite[p.391]{Beresnevich-Dickinson-Velani-07:MR2373145} for details). Thus
$$
|qf(\tfrac{p_1}{q})-p_2| \leq |p_2-qf(\tfrac{p_1}{q})+\tfrac{q}{2}f''(\tilde{x})(x-\tfrac{p_1}{q})^2)|+ |\tfrac{q}{2}f''(\tilde{x})(x-\tfrac{p_1}{q})^2)|.
$$
In view of~(\ref{cond_g2p}), (\ref{vbvb}) and~(\ref{theo7_Taylor_consequence_1}),
$|qf(\tfrac{p_1}{q})-p_2| \leq  c_0\delta +\tfrac{Q}{2}c_2(\tfrac{C_1}{Q^2\delta})^2.$
By (\ref{e:003}), this further transforms into
$|qf(\tfrac{p_1}{q})-p_2| \leq 2 c_0\delta$. This inequality and (\ref{smallqlarge}) imply
$|f(\tfrac{p_1}{q})-\tfrac{p_2}{q}| \leq \tfrac{\delta }{Q}$.
Thus, we have shown that $(q,p_1,p_2)\in \cR^{c_0}_f(Q,\delta,J) $.
Therefore, in view of (\ref{vbvb}) we have that
$$
G\subset \Delta^{c_0}_f(Q,\delta,J,\rho)=\bigcup_{(q,p_1,p_2)\in\cR^{c_0}_f(Q,\delta,J)}
\big\{x:|x-p_1/q|\le \rho\big\}\quad\text{when}\quad\rho=\frac{C_1}{\delta Q^2}.
$$
By (\ref{theo7_Bisgrand}), this shows (\ref{vb+x}) and completes the proof.

{\small



\begin{thebibliography}{10}

\bibitem{Badziahin-10}
{\sc D.~Badziahin}, {\em Inhomogeneous {Diophantine} approximation on curves and Hausdorff dimension}, Adv. Math., 223 (2010), pp.~329--351.

\bibitem{Badziahin-Levesley-07:MR2347267}
{\sc D.~Badziahin and J.~Levesley}, {\em A note on simultaneous and
  multiplicative {D}iophantine approximation on planar curves}, Glasg. Math.
  J., 49 (2007), pp.~367--375.

\bibitem{Baker-1978}
{\sc R.~Baker}, {\em Dirichlet's theorem on {Diophantine} approximation}, Math.
  Proc. Cam. Phil. Soc., 83 (1978), pp.~37--59.

\bibitem{Beresnevich-02:MR1905790}
{\sc V.~Beresnevich}, {\em A {G}roshev type theorem for convergence on manifolds}, Acta Math. Hungar., 94 (2002), pp.~99--130.

\bibitem{Beresnevich-SDA1}
{\sc V.~Beresnevich}, {\em Rational points near
  manifolds and metric {Diophantine} approximation}.
\newblock \url{http://arxiv.org/abs/0904.0474}, 2007, preprint.

\bibitem{Beresnevich-Bernik-Dickinson-Dodson-99:MR1807055}
{\sc V.~Beresnevich, V.~I. Bernik, H.~Dickinson, and M.~M. Dodson}, {\em The
  {K}hintchine-{G}roshev theorem for planar curves}, R. Soc. Lond. Proc. Ser. A
  Math. Phys. Eng. Sci., 455 (1999), pp.~3053--3063.

\bibitem{Beresnevich-Bernik-Goetze-09}
{\sc V.~Beresnevich, V.~Bernik, and F.~G\"otze}, {\em The distribution of close
  conjugate algebraic numbers}.
\newblock Comp. Math. (to appear). \url{http://arxiv.org/abs/0906.4286}, 2009, preprint.

\bibitem{Beresnevich-Bernik-Kleinbock-Margulis-02:MR1944505}
{\sc V.~Beresnevich, V.~I. Bernik, D.~Y. Kleinbock, and G.~A. Margulis}, {\em
  Metric {D}iophantine approximation: the {K}hintchine-{G}roshev theorem for
  nondegenerate manifolds}, Mosc. Math. J., 2 (2002), pp.~203--225.
\newblock Dedicated to Yuri I. Manin on the occasion of his 65th birthday.

\bibitem{Beresnevich-Dickinson-Velani-06:MR2184760}
{\sc V.~Beresnevich, D.~Dickinson, and S.~Velani}, {\em Measure theoretic laws
  for lim sup sets}, Mem. Amer. Math. Soc., 179 (2006), pp.~x+91.

\bibitem{Beresnevich-Dickinson-Velani-07:MR2373145}
{\sc V.~Beresnevich, D.~Dickinson, and S.~Velani},  {\em Diophantine
  approximation on planar curves and the distribution of rational points}, Ann.
  of Math. (2), 166 (2007), pp.~367--426.
\newblock With an Appendix II by R. C. Vaughan.

\bibitem{Beresnevich-Vaughan-Velani-08-Inhom}
{\sc V.~Beresnevich, R.~Vaughan, and S.~Velani}, {\em Inhomogeneous
  {D}iophantine approximation on planar curves}.
\newblock \url{http://arxiv.org/abs/0903.2817}, 2009, preprint.

\bibitem{Beresnevich-Velani-07:MR2285737}
{\sc V.~Beresnevich and S.~Velani}, {\em A note on simultaneous {D}iophantine
  approximation on planar curves}, Math. Ann., 337 (2007), pp.~769--796.

\bibitem{Bernik-1979}
{\sc V.~I. Bernik}, {\em On the exact order of approximation of almost all
  points on the parabola}, Mat. Zametki, 26 (1979), pp.~657--665.
\newblock (In Russian).

\bibitem{Bernik-Dickinson-Dodson-98:MR1632820}
{\sc V.~I. Bernik, H.~Dickinson, and M.~M. Dodson}, {\em A {K}hintchine-type
  version of {S}chmidt's theorem for planar curves}, R. Soc. Lond. Proc. Ser. A
  Math. Phys. Eng. Sci., 454 (1998), pp.~179--185.
 

\bibitem{BernikDodson-1999}
{\sc V.~I. Bernik and M.~M. Dodson}, {\em Metric {Diophantine} approximation on
  manifolds}, vol.~137 of Cambridge Tracts in Mathematics, Cambridge University
  Press, Cambridge, 1999.

\bibitem{Bernik-Kleinbock-Margulis-01:MR1829381}
{\sc V.~I. Bernik, D.~Kleinbock, and G.~A. Margulis}, {\em Khintchine-type
  theorems on manifolds: the convergence case for standard and multiplicative
  versions}, Internat. Math. Res. Notices,  (2001), pp.~453--486.

\bibitem{Gelbaum-Olmsted-03:MR1996162}
{\sc B.~R. Gelbaum and J.~M.~H. Olmsted}, {\em Counterexamples in analysis},
  Dover Publications Inc., Mineola, NY, 2003.
\newblock Corrected reprint of the second (1965) edition.

\bibitem{Huxley-1994-rational_points}
{\sc M.~N. Huxley}, {\em The rational points close to a curve}, Ann. Scuola
  Norm. Sup. Pisa Cl. Sci. (4), 21 (1994), pp.~357--375.

\bibitem{Kleinbock-Margulis-98:MR1652916}
{\sc D.~Y. Kleinbock and G.~A. Margulis}, {\em Flows on homogeneous spaces and
  {D}iophantine approximation on manifolds}, Ann. of Math. (2), 148 (1998),
  pp.~339--360.

\bibitem{Levitin-Poljak-66:MR0211590}
{\sc E.~S. Levitin and B.~T. Poljak}, {\em Minimization methods in the presence
  of constraints}, \v Z. Vy\v cisl. Mat. i Mat. Fiz., 6 (1966), pp.~787--823.

\bibitem{Nicolescu-Persson-06:ISBN9780387243009}
{\sc C. Nicolescu and L.-E. Persson}, {\em Convex Functions and their Applications : A Contemporary Approach}, coll. " Ouvrages de math\'ematiques de la Soci\'et\'e mathematique du Canada " \v Springer, 2006, (ISBN 978-0387243009).

\bibitem{Rynne-03:MR1950434}
{\sc B.~P. Rynne}, {\em Simultaneous {D}iophantine approximation on manifolds
  and {H}ausdorff dimension}, J. Number Theory, 98 (2003), pp.~1--9.

\bibitem{Schmidt-1964a}
{\sc W.~M. Schmidt}, {\em Metrical theorem on the fractional parts of  sequences}, Trans. Amer. Math. Soc., 110 (1964), pp.~493--518.

\bibitem{Schmidt-1980}
{\sc W.~M. Schmidt},  {\em Diophantine
  {Approximation}}, Springer-Verlag, Berlin and New York, 1980.

\bibitem{Sprindzuk-1980-Achievements}
{\sc V.~\Sprindzuk}, {\em Achievements and problems in {Diophantine}
  approximation theory}, Russian Math. Surveys, 35 (1980), pp.~1--80.

\bibitem{Vaughan-Velani-2007}
{\sc R.~C. Vaughan and S.~Velani}, {\em Diophantine approximation on planar
  curves: the convergence theory}, Invent. Math., 166 (2006), pp.~103--124.

\bibitem{Vial-82:MR637263}
{\sc J.-P. Vial}, {\em Strong convexity of sets and functions}, J. Math.
  Econom., 9 (1982), pp.~187--205.

\bibitem{Vial-83:MR707055}
{\sc J.-P. Vial}, {\em Strong and weak
  convexity of sets and functions}, Math. Oper. Res., 8 (1983), pp.~231--259.

\end{thebibliography}

\def\cprime{$'$} \def\cprime{$'$} \def\cprime{$'$} \def\cprime{$'$}
  \def\cprime{$'$} \def\cprime{$'$} \def\cprime{$'$} \def\cprime{$'$}
  \def\cprime{$'$} \def\cprime{$'$} \def\cprime{$'$} \def\cprime{$'$}
  \def\cprime{$'$} \def\cprime{$'$} \def\cprime{$'$} \def\cprime{$'$}
  \def\polhk#1{\setbox0=\hbox{#1}{\ooalign{\hidewidth
  \lower1.5ex\hbox{`}\hidewidth\crcr\unhbox0}}} \def\cprime{$'$}
  \def\cprime{$'$}

\bigskip

\noindent{\footnotesize VB\,: }\begin{minipage}[t]{0.9\textwidth}
\footnotesize{\sc University of York, Heslington, York, YO10 5DD,
England}\\
{\it E-mail address}\,:~~ \verb|vb8@york.ac.uk|
\end{minipage}

\smallskip

\noindent{\footnotesize EZ\,: }\begin{minipage}[t]{0.9\textwidth}
\footnotesize{\sc Institut de math\'ematiques de Jussieu, Universite de Paris 6, 75013, Paris, France}\\
{\it E-mail address}\,:~~ \verb|zorin@math.jussieu.fr|\quad\emph{or}\quad\verb|EvgeniyZorin@yandex.ru|
\end{minipage}

}

\end{document}